\newtheorem{thrm}{Theorem}[section]
\newtheorem{lemma}[thrm]{Lemma}
\newtheorem{prop}[thrm]{Proposition}
\newtheorem{cor}[thrm]{Corollary}
\theoremstyle{definition}
\newtheorem{defn}[thrm]{Definition}
\theoremstyle{remark}
\numberwithin{equation}{section}
\newcommand{\dbar}{$\bar{\partial}$}
\newcommand{\mdbar}{\bar{\partial}}
\newcommand{\lre}{\mathscr{E}}
\newcommand{\lra}{\mathscr{A}}
\newcommand{\lrm}{\mathcal{M}}
\newcommand{\lrl}{\mathcal{L}}
\newcommand{\lrq}{\mathcal{Q}}
\newcommand{\lrt}{\mathcal{T}}
\newcommand{\lrp}{\mathcal{P}}
\begin{document}

\bibliographystyle{plain}

\title{Integral representations on non-smooth domains}

\author{Dariush Ehsani}

\address{Department of Mathematics, Penn State - Lehigh Valley, Fogelsville,
PA 18051}
 \email{ehsani@psu.edu}
\curraddr{Humboldt-Universit\"{a}t, Institut f\"{u}r Mathematik,
10099 Berlin }

\subjclass[2000]{Primary 32A25, 32W05}

\thanks{Partially supported by the Alexander von Humboldt Stiftung}

\begin{abstract}  We derive integral representations for
$(0,q)$-forms, $q\ge1$, on non-smooth strictly pseudoconvex
domains, the Henkin-Leiterer domains.  A $(0,q)$-form, $f$ is
written in terms of integral operators acting on $f$, $\mdbar f$,
and $\mdbar^{\ast} f$.  The representation is applied to derive
$L^{\infty}$ estimates.
\end{abstract}

\maketitle

\ \\
\section{Introduction}
Lieb and Range in \cite{LR86} developed a powerful integral
representation by which estimates in the theory of the
\dbar-Neumann problem could be deduced.  The main theorem was an
integral representation of $(0,q)$-forms on $D\subset\subset X$ a
smooth strictly pseudoconvex domain in a complex manifold $X$.
\begin{thrm}[Lieb-Range]
\label{lrthrm}
 Let $P_0:L^2(D)\rightarrow
\mathcal{O}\cap L^2(D)$ be the Bergman projection.  There exist
integral operators $T_q:L^2_{(0,q+1)}(D)\rightarrow
L^2_{(0,q)}(D)$ $0\le q<n=\dim X$ such that for $f\in
L^2_{(0,q)}\cap Dom(\bar{\partial})\cap
Dom(\bar{\partial}^{\ast})$ one has
\begin{equation*}
f=P_0f+T_0\bar{\partial}f +\mbox{ error terms } \quad \mbox{ for }
q=0 \end{equation*}
 and
\begin{equation}
 \label{highorder}
f=T_q\bar{\partial}f+T_{q-1}^{\ast}\bar{\partial}^{\ast}f +\mbox{
error terms } \quad \mbox{ for } q\ge 1.
\end{equation}
\end{thrm}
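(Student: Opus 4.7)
The plan is to derive the representation from a Cauchy--Fantappi\`e--Leray/Henkin homotopy formula and then use the $\bar{\partial}$-Neumann boundary condition together with integration by parts to surface the $\bar{\partial}^{\ast}f$ term.

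First I would construct a Henkin--Ramirez support function $\Phi(z,\zeta)$ on a neighborhood of $\overline{D}\times\overline{D}$, holomorphic in $z$ and satisfying a Levi-type estimate $2\,\mathrm{Re}\,\Phi(z,\zeta)\ge -\rho(\zeta)-\rho(z)+c|z-\zeta|^{2}$ near the diagonal. Assembling $\Phi$ with the Bochner--Martinelli section into a Cauchy--Fantappi\`e--Leray double form $\Omega(z,\zeta)$ and applying Stokes' theorem on $D\setminus\{z\}$ yields the Koppelman homotopy formula
\begin{equation*}
f(z) = \bar{\partial}_z\!\int_D\Omega_{q-1}\wedge f + \int_D\Omega_q\wedge\bar{\partial}f + \int_{bD}\Omega_{q-1}\wedge f
\end{equation*}
for $f\in C^{1}_{(0,q)}(\overline{D})$, where $\Omega_q$ denotes the $(0,q)_z$-component of the resulting double form.

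Next I would reexpress the two right-hand terms that do not appear in the statement---the $\bar{\partial}_z$-exact one and the boundary integral---as $T^{\ast}_{q-1}\bar{\partial}^{\ast}f$ plus error. Two ingredients drive this: (i) holomorphicity of $\Omega_{q-1}$ in $z$, so the Cauchy--Fantappi\`e--Leray identity lets one trade $\bar{\partial}_z$ acting on the kernel for $\bar{\partial}_\zeta$ acting on a related kernel modulo $d_\zeta$-exact terms; and (ii) $f\in\mathrm{Dom}(\bar{\partial}^{\ast})$, which forces the $\bar{\partial}\rho$-contraction of $f$ to vanish on $bD$. Integrating by parts in $\zeta$ then converts both terms into interior integrals whose integrand is $\bar{\partial}^{\ast}_\zeta f$ paired against a kernel of the right form, while the boundary pieces produced by the integration by parts are killed by (ii). Collecting gives the representation for $q\ge 1$. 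For $q=0$ the same calculation identifies the obstruction to recovering $f$ from $\bar{\partial}f$ and $\bar{\partial}^{\ast}f$ as an $L^{2}$ holomorphic function, whence the appearance of $P_0 f$.

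The main obstacle I expect is the bookkeeping of the Cauchy--Fantappi\`e--Leray double-form calculus in the second step: verifying that each leftover piece is either a boundary integral annihilated by the $\bar{\partial}^{\ast}$ condition, or an operator of strictly lower order (bounded or compact on $L^{2}$ with a gain of regularity) relative to the main $T_q$ and $T^{\ast}_{q-1}$ pieces. On a smooth strictly pseudoconvex domain the Levi estimate on $\Phi$ controls every such remainder uniformly; this is precisely the analysis that the rest of the paper will have to rework on Henkin--Leiterer domains, where the Levi form is allowed to degenerate.
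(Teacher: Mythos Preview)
This theorem is quoted from Lieb--Range \cite{LR86} and is not proved in the present paper; the relevant comparison is to the Lieb--Range machinery as the paper reproduces it in Sections~3 and~4 (the sketch of proof of Theorem~\ref{bir} together with the cancellation arguments culminating in Theorem~\ref{thrmP}).

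Your starting point---the Bochner--Martinelli--Koppelman formula, a Henkin--Ramirez section, and the Koppelman homotopy via transition kernels---is the same as in the paper. The divergence comes in your second step, and there is a genuine gap there.

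You propose to produce the $T_{q-1}^{\ast}\bar{\partial}^{\ast}f$ term by integrating by parts in $\zeta$ and invoking $f\in\mathrm{Dom}(\bar{\partial}^{\ast})$ to kill the resulting boundary pieces. That is not the Lieb--Range mechanism. In the paper the boundary integrals are removed by Stokes' theorem after replacing $\rho^{2}$ by $P_{\epsilon}$ in the denominators (so that the kernels extend across $\partial D_{\epsilon}$); the $\bar{\partial}$-Neumann boundary condition is not what eliminates them. The $\bar{\partial}^{\ast}f$ term then appears because the volume representation is \emph{algebraically} rearranged into $\mathbf{T}_q\bar{\partial}f+(\mathbf{T}_{q-1})^{\ast}\bar{\partial}^{\ast}f+\mathbf{P}_qf$, with $\mathbf{P}_q$ carrying the antisymmetrized kernel $\mathcal{Q}_q-\mathcal{Q}_q^{\ast}=\vartheta_{\zeta}\partial_z\mathcal{L}_{q-1}-(\vartheta_{\zeta}\partial_z\mathcal{L}_{q-1})^{\ast}$.

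The second and more serious omission is the Levi metric. The paper flags this immediately after the theorem statement: the metric must be the Levi metric near $\partial D$, because $\mathbf{P}_q$ is \emph{a priori} as singular as the principal terms and only becomes an ``error term'' through the cancellation-of-singularities argument (Proposition~\ref{cnclprop}, Theorem~\ref{thrmP}). That cancellation rests on identities like Propositions~\ref{proplnp} and~\ref{2p-l2}, which are specific to the Levi metric. Your remark that ``the Levi estimate on $\Phi$ controls every such remainder uniformly'' conflates the size estimate on the support function (which bounds individual kernels) with the algebraic cancellation in $\mathcal{Q}_q-\mathcal{Q}_q^{\ast}$ (which lowers the type by one and is an entirely separate computation). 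Without that ingredient the representation you would obtain has a residual $\mathbf{P}_qf$ of the same order as the main terms, and the theorem does not follow.
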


 In
(\ref{highorder}) the metric has to be carefully adapted to the
boundary.  The choice of the metric as the Levi metric as in
Greiner and Stein \cite{GS} was essential in their "cancellation
of singularities" argument, which allowed for treatment of terms
in the representation as error terms.

We take up the problem here of establishing an integral
representation in the manner of \cite{LR86} relaxing the
assumption that $D$ be smooth.
  Let $D$ have a defining function, $r$.  We allow for singularities in
the boundary, $\partial D$ of $D$ by permitting the possibility
that $dr$ vanishes at points on $\partial D$.  Such domains were
first studied by Henkin and Leiterer in \cite{HL}, and we
therefore refer to them as Henkin-Leiterer domains.

  We shall make the additional assumtion that $r$ is a Morse
function.  Let $U$ be a neighborhood of $\partial D$.  Then
\begin{equation*}
U\cap D=\{ x\in U: r(x)<0\},
\end{equation*}
 $r$ with only non-degenerate critical points on $U$.  We have
\begin{equation*}
\partial D=\{ x:r(x)=0\},
\end{equation*}
and we can assume that there are finitely many critical points on
$bD$, and none on $U\setminus bD$.

In \cite{EhLi}, Lieb and the author studied the Bergman projection
on Henkin-Leiterer domains in $\mathbb{C}^n$, and obtained
weighted $L^p$ estimates.  We here concern ourselves with proving
an analogue of (\ref{highorder}) on Henkin-Leiterer domains.  The
domain $D$ has an exhaustion of smooth strictly pseudoconvex
domains $\{D_{\epsilon}\}_{\epsilon}$ on each of which the
analysis of Lieb and Range applies.  One immediate problem one
runs into with this approach is that forms which are
Dom$(\mdbar^{\ast})$ on $D$ are may not be in
Dom$(\mdbar^{\ast}_{\epsilon})$ on $D_{\epsilon}$.  We deal with
this problem by using a density lemma of Henkin and Iordan
\cite{HI} which provides for forms $f_{\epsilon}$ which are in
$L^2(D_{\epsilon})\cap \mbox{Dom}(\mdbar)\cap
\mbox{Dom}(\mdbar^{\ast}_{\epsilon})$ and which approximate a
given $f\in L^2(D)\cap \mbox{Dom}(\mdbar)\cap
\mbox{Dom}(\mdbar^{\ast})$.
 Our approach therefore is to obtain an
integral representation valid on each domain $D_{\epsilon}$ and in
the end let $\epsilon\rightarrow 0$.
 In this approach we need to multiply our operators by factors of
  $|dr|$ so that convergence of the representation as
 $\epsilon\rightarrow 0$ is obtained.  Let $\gamma=|\partial r|$.
 The analogue of Theorem
\ref{lrthrm} we establish here is
\begin{thrm}  Let $D$ be a Henkin-Leiterer domain with a defining
function which is Morse.  There exist integral operators
$\tilde{T}_q:L^2_{(0,q+1)}(D)\rightarrow L^2_{(0,q)}(D)$ $0\le
q<n=\dim X$ such that for $f\in L^2_{(0,q)}\cap
Dom(\bar{\partial})\cap Dom(\bar{\partial}^{\ast})$ one has
\begin{equation*}
\gamma^3
 f=\tilde{T}_q\bar{\partial}f+\tilde{T}_{q-1}^{\ast}\bar{\partial}^{\ast}f
+\mbox{ error terms } \quad \mbox{ for } q\ge 1.
\end{equation*}
\end{thrm}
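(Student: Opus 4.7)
The plan is to obtain the representation as a limit of Lieb-Range representations on the smooth strictly pseudoconvex exhaustion $\{D_{\epsilon}\}_{\epsilon}$ of $D$, with the weight $\gamma^3$ introduced precisely to absorb the singularities of the Levi-metric-based kernels that blow up at the critical points of $r$ on $\partial D$. Since $r$ is Morse with only finitely many critical points on $\partial D$ and none on $U\setminus\partial D$, the level sets $\{r=-\epsilon\}$ are smooth and strictly pseudoconvex for all sufficiently small $\epsilon>0$, so Theorem \ref{lrthrm} applies on each $D_{\epsilon}$.

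My first step is to approximate a given $f\in L^2_{(0,q)}(D)\cap\mbox{Dom}(\mdbar)\cap\mbox{Dom}(\mdbar^{\ast})$ by forms $f_{\epsilon}$ on $D_{\epsilon}$ that lie in $L^2\cap\mbox{Dom}(\mdbar)\cap\mbox{Dom}(\mdbar^{\ast}_{\epsilon})$, using the Henkin-Iordan density lemma \cite{HI}, and arranged so that $f_{\epsilon}\to f$, $\mdbar f_{\epsilon}\to\mdbar f$, and $\mdbar^{\ast}_{\epsilon}f_{\epsilon}\to\mdbar^{\ast}f$ in $L^2$. Applying Theorem \ref{lrthrm} on $D_{\epsilon}$ yields
\begin{equation*}
f_{\epsilon}=T_{q,\epsilon}\mdbar f_{\epsilon}+T^{\ast}_{q-1,\epsilon}\mdbar^{\ast}_{\epsilon}f_{\epsilon}+\mbox{error terms},
\end{equation*}
where the operators are built from the Levi metric adapted to $\partial D_{\epsilon}$.

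The next step is to analyze the dependence of these kernels on $\epsilon$ and on position near the critical set of $r$. Inspection of the construction in \cite{GS,LR86} shows that the Levi-metric normalizations, arising from dividing by $\partial r$ and from inverting the Levi form, introduce singularities controlled by inverse powers of $\gamma$; tracking these powers through both $T_{q,\epsilon}$ and $T^{\ast}_{q-1,\epsilon}$ as well as the error terms shows that three inverse powers of $\gamma$ suffice. Multiplying the Lieb-Range identity through by $\gamma^3$, defining $\tilde{T}_q$ and $\tilde{T}^{\ast}_{q-1}$ as the operators whose kernels are $\gamma^3$ times the (now locally integrable) limits of the kernels of $T_{q,\epsilon}$ and $T^{\ast}_{q-1,\epsilon}$, and passing to the limit $\epsilon\to 0$ via dominated convergence combined with the $L^2$-convergence from the density step, should yield
\begin{equation*}
\gamma^3 f=\tilde{T}_q\mdbar f+\tilde{T}^{\ast}_{q-1}\mdbar^{\ast}f+\mbox{error terms}
\end{equation*}
in $L^2(D)$.

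The hardest part will be the uniform-in-$\epsilon$ kernel analysis near the critical points of $r$. The Lieb-Range representation on $D_{\epsilon}$ rests on the Greiner-Stein "cancellation of singularities" mechanism, in which pieces that are individually worse behaved than their sum combine to produce a controlled error; one must verify that this cancellation remains stable as $\epsilon\to 0$ and is compatible with the $\gamma^3$ weighting, particularly where $\partial D_{\epsilon}$ degenerates around a critical point. A secondary technical point will be to confirm that the power $\gamma^3$ is actually what emerges from the bookkeeping: both the forward operators and their adjoints must carry the weight simultaneously, and the boundary integrals that appear in the Lieb-Range construction must be shown to converge, after weighting, to well-defined operators on $D$.
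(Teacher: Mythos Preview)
Your overall strategy matches the paper's: exhaust $D$ by smooth $D_\epsilon$, approximate $f$ via the Henkin--Iordan density lemma, apply the Lieb--Range representation on each $D_\epsilon$, track $\gamma$-factors, weight by $\gamma^3$, and pass to the limit using uniform-in-$\epsilon$ kernel estimates. You also correctly flag the cancellation-of-singularities step as the technical core.

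One point deserves sharpening. The paper shows that the principal operators $\mathbf{T}_q^\epsilon$ are already of good type ($E_{1-2n}+A_1^\epsilon$, Theorem~\ref{thrmT}) and carry no dangerous inverse powers of $\gamma$; the need for $\gamma^3$ does \emph{not} come from Levi-metric normalizations in $T_{q,\epsilon}$ or $T_{q-1,\epsilon}^\ast$ as you suggest. It comes entirely from the operator $\mathbf{P}_q^\epsilon=\lrq_q^\epsilon-\lrq_q^{\epsilon\ast}$ sitting among the ``error terms'': naively $\mathbf{P}_q^\epsilon$ has double type $(-1,0)$, which no power of $\gamma$ can repair, but the cancellation (Theorem~\ref{thrmP}, Proposition~\ref{cnclprop}) improves the smooth type to $1$ and yields $\mathbf{P}_q^\epsilon=\frac{1}{\gamma}\lra_{(-1,1)}^\epsilon+\frac{1}{\gamma^\ast}\lra_{(-1,1)}^\epsilon$. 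Three $\gamma$-factors, distributed as $\gamma(z)\cdot(\,\cdot\,)\circ\gamma(\zeta)^2$ between the two variables (not simply $\gamma(z)^3$ on the outside), are then exactly what is required to clear the $\gamma^{-1}$, $(\gamma^\ast)^{-1}$ and raise the type from $-1$ to $1$, after which the uniform estimates of Proposition~\ref{typicalest} give the limit. So your bookkeeping of the weight should be redirected from the main operators to the $\mathbf{P}_q$ term, and the weight must be split between $\zeta$ and $z$.
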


In a separate paper we build off the integral representation
established here, and in particular we look at the mapping
properties of the integral operators under differentiation so as
to establish $C^k$ estimates.

The author wishes to acknowledge the fruitful discussions with
Ingo Lieb over the matters in this paper.  His ideas and advice on
particulars were instrumental in achieving the results here.

\section{Admissible operators}

   With local coordinates denoted by $\zeta_1,\ldots,\zeta_n$, we
define a Levi metric in a neighborhood of $\partial D$ by
\begin{equation*}
ds^2= \sum_{j,k} \frac{\partial^2 r}{\partial \zeta_j, \partial
\overline{\zeta}_k} (\zeta)d\zeta_j d\bar{\zeta}_k.
\end{equation*}
A Levi metric on $X$ is a Hermitian metric which is a Levi metric
in a neighborhood of $\partial D$.

We thus equip $X$ with a Levi metric and we take $\rho(x,y)$ to be
a symmetric, smooth function on $X\times X$ which coincides with
the geodesic distance in a neighborhood of the diagonal,
$\Lambda$, and is positive outside of $\Lambda$.

For ease of notation, in what follows we will always work with
local coordinates, $\zeta$ and $z$.

      Since $D$ is strictly pseudoconvex and $r$ is a Morse function, we
can take $r_{\epsilon}=r+\epsilon$ for epsilon small enough.  Then
$r_{\epsilon}$ will be defining functions for smooth, strictly
pseudoconvex $D_{\epsilon}$.  For such $r_{\epsilon}$ we have that
all derivatives of $r_{\epsilon}$ are indpendent of $\epsilon$.
 In particular, $\gamma_{\epsilon}(\zeta)=\gamma(\zeta)$ and
$\rho_{\epsilon}(\zeta,z)=\rho(\zeta,z)$.

 Let $F$ be the Levi
polynomial for $D_{\epsilon}$:
\begin{equation*}
F(\zeta,z)
 = \sum_{j=1}^n\frac{\partial
 r_{\epsilon}}{\partial\zeta_j}(\zeta)(\zeta_j-z_j)
  -\frac{1}{2}\sum_{j,k=1}^n\frac{\partial^2
  r_{\epsilon}}{\partial\zeta_j\partial\zeta_k}(\zeta_j-z_j)(\zeta_k-z_k).
\end{equation*}
 We note that $F(\zeta,z)$ is independent of $\epsilon$ since
 derivatives of $r_{\epsilon}$ are.

For $\epsilon$ small enough we can choose $\delta>0$ and
$\varepsilon>0$ and a patching function $\varphi(\zeta,z)$,
independent of $\epsilon$, on $\mathbb{C}^n\times\mathbb{C}^n$
such that
\begin{equation*}
\varphi(\zeta,z)=
 \begin{cases}
1 & \mbox{for } \rho^2(\zeta,z)\le
\frac{\varepsilon}{2}\\
0 &  \mbox{for } \rho^2(\zeta,z)\ge \frac{3}{4}\varepsilon,
\end{cases}
\end{equation*}
and defining $S_{\delta}=\{ \zeta:|r(\zeta)|<\delta\}$,
$D_{-\delta}=\{ \zeta:r(\zeta)<\delta\}$, and
\begin{equation*}
\phi_{\epsilon}(\zeta,z)
 =\varphi(\zeta,z)(F(\zeta,z)-r_{\epsilon}(\zeta))
  +(1-\varphi(\zeta,z))\rho^2(\zeta,z),
 \end{equation*}
we have the following
\begin{lemma}
 \label{phiest}
On $D_{\epsilon}\times D_{\epsilon}\bigcap S_{\delta}\times
D_{-\delta}$,
 \begin{equation*}
|\phi_{\epsilon}|\gtrsim |\langle \partial
r_{\epsilon}(z),\zeta-z\rangle|+\rho^2(\zeta,z),
\end{equation*}
where the constants in the inequalities are independent of
$\epsilon$.
\end{lemma}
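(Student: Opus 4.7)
The plan is to partition the set $D_\epsilon \times D_\epsilon \cap S_\delta \times D_{-\delta}$ into three subregions according to the value of the cutoff $\varphi$, and to show the estimate separately on each. On $\{\rho^2 \ge 3\varepsilon/4\}$ the cutoff vanishes and $\phi_\epsilon = \rho^2$, which is bounded below by a positive constant, while $|\langle \partial r_\epsilon(z),\zeta-z\rangle|\lesssim|\zeta-z|\lesssim\rho$ is bounded above by a (compactness) constant on $X$; hence both terms on the right are dominated by $\rho^2$ itself. In the transition annulus $\{\varepsilon/2\le\rho^2\le 3\varepsilon/4\}$ one uses that the real part $\text{Re}\,\phi_\epsilon = \varphi\,\text{Re}(F-r_\epsilon)+(1-\varphi)\rho^2$ is a non-negative convex combination bounded below by a uniform multiple of $\rho^2$ (using strict plurisubharmonicity for the first summand), and the same boundedness argument handles the tangential term.

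The content of the lemma therefore lies in the region $\{\rho^2\le \varepsilon/2\}$ where $\varphi\equiv 1$ and $\phi_\epsilon=F(\zeta,z)-r_\epsilon(\zeta)$. I would start from the Taylor expansion of $r_\epsilon$ around $\zeta$. Combined with strict plurisubharmonicity of $r$ (whose Levi form coincides with that of $r_\epsilon$ and is bounded below by $c|\zeta-z|^2\gtrsim\rho^2$), this yields, for $\rho$ sufficiently small (shrinking $\varepsilon$ and $\delta$ if necessary, in a way that depends only on $r$):
\begin{equation*}
2\,\text{Re}\bigl(F(\zeta,z)-r_\epsilon(\zeta)\bigr)
= -r_\epsilon(\zeta)-r_\epsilon(z) + L_{r_\epsilon}(\zeta-z) + O(\rho^3)
\gtrsim |r_\epsilon(\zeta)| + |r_\epsilon(z)| + \rho^2,
\end{equation*}
where I used $r_\epsilon(\zeta),r_\epsilon(z)\le 0$ on $D_\epsilon\times D_\epsilon$. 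In particular $|\phi_\epsilon|\ge\text{Re}\,\phi_\epsilon\gtrsim |r_\epsilon(\zeta)|+\rho^2$.

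For the tangential piece $|\langle\partial r_\epsilon(z),\zeta-z\rangle|$, I would read off from the definition of $F$ that $F(\zeta,z)=\langle\partial r_\epsilon(\zeta),\zeta-z\rangle+Q$ with $|Q|\lesssim\rho^2$, and replace $\partial r_\epsilon(\zeta)$ by $\partial r_\epsilon(z)$ at the cost of an additional $O(\rho^2)$ error (using that derivatives of $r_\epsilon$ equal those of $r$ and are therefore Lipschitz with $\epsilon$-independent constants). Then
\begin{equation*}
|\langle\partial r_\epsilon(z),\zeta-z\rangle|
\le |F(\zeta,z)| + C\rho^2
\le |\phi_\epsilon| + |r_\epsilon(\zeta)| + C\rho^2
\lesssim |\phi_\epsilon|,
\end{equation*}
by the real-part estimate. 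Adding this to $|\phi_\epsilon|\gtrsim\rho^2$ gives the claim.

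I expect the main obstacle is the uniformity in $\epsilon$: one must choose $\varepsilon,\delta$ and the threshold below which the $O(\rho^3)$ Taylor remainder is absorbed into the $\rho^2$ term based only on the strict plurisubharmonicity constant and bounds on the third derivatives of $r$, never on those of $r_\epsilon$. This is precisely what the remark preceding the lemma affords, since $r_\epsilon=r+\epsilon$ implies that all derivatives of $r_\epsilon$, as well as $\gamma_\epsilon$ and $\rho_\epsilon$, agree with those of $r$; and $\partial r$ is allowed to vanish on $\partial D$ (the Henkin--Leiterer setting), but this does not enter the estimate because the right-hand side contains $\langle\partial r_\epsilon(z),\zeta-z\rangle$ without any normalization by $\gamma$.
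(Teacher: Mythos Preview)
Your proof is correct and rests on the same core idea as the paper: both extract the basic lower bound $\mathrm{Re}\,\phi_\epsilon \gtrsim -r_\epsilon(\zeta)-r_\epsilon(z)+\rho^2$ from the second-order Taylor expansion of $r_\epsilon$ at $\zeta$ combined with strict plurisubharmonicity, and both note that uniformity in $\epsilon$ is automatic because all derivatives of $r_\epsilon$ coincide with those of $r$. Where the arguments diverge is in the treatment of the tangential term $|\langle\partial r_\epsilon(z),\zeta-z\rangle|$. The paper retains $|\mathrm{Im}\,\phi_\epsilon|$ in the basic estimate, uses $-r_\epsilon(\zeta)-r_\epsilon(z)\ge |r_\epsilon(\zeta)-r_\epsilon(z)|$, and then passes through the equivalence
\[
|r_\epsilon(\zeta)-r_\epsilon(z)|+|\mathrm{Im}\langle\partial r_\epsilon(\zeta),\zeta-z\rangle|+\rho^2 \;\approx\; |\langle\partial r_\epsilon(z),\zeta-z\rangle|+|\langle\partial r_\epsilon(\zeta),\zeta-z\rangle|+\rho^2,
\]
itself a Taylor consequence. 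Your route is more direct: the identity $F=\phi_\epsilon+r_\epsilon(\zeta)$ gives $|F|\le |\phi_\epsilon|+|r_\epsilon(\zeta)|$, and the already-established real-part bound $|\phi_\epsilon|\gtrsim |r_\epsilon(\zeta)|+\rho^2$ then absorbs both $|r_\epsilon(\zeta)|$ and the $O(\rho^2)$ errors; the imaginary part is never needed. Your explicit handling of the three regions determined by the cutoff $\varphi$ is also more careful than the paper, which leaves this routine patching implicit.
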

\begin{proof}
From a Taylor series expansion
\begin{equation}
\label{phiestregprop}
 |\phi_{\epsilon}|\gtrsim
 -r_{\epsilon}(\zeta)-r_{\epsilon}(z)
+\rho^2(\zeta,z)+|\mbox{Im}\phi_{\epsilon}|.
\end{equation}
On $D_{\epsilon}\times D_{\epsilon}$,
$-r_{\epsilon}(\zeta)-r_{\epsilon}(z)
 \ge |r_{\epsilon}(\zeta)-r_{\epsilon}(z)|$.
We combine this with
\begin{equation*}
|\mbox{Im}\phi_{\epsilon}|+\rho^2(\zeta,z)\gtrsim
|\mbox{Im}\langle
\partial
 r_{\epsilon}(\zeta),\zeta -z\rangle|,
\end{equation*}
%
%
and we therefore write
\begin{align*}
|\phi_{\epsilon}|&\gtrsim
 |r_{\epsilon}(\zeta)-r_{\epsilon}(z)|+|\mbox{Im} \langle \partial
r_{\epsilon}(\zeta),\zeta -z\rangle| +\rho^2(\zeta,z)\\
 &\gtrsim |\langle \partial
r_{\epsilon}(z),\zeta-z\rangle|+\rho^2(\zeta,z),
\end{align*}
where the last inequality follows from
\begin{multline*}
|\langle \partial r_{\epsilon}(z),\zeta-z\rangle|+|\langle
\partial r_{\epsilon}(\zeta),\zeta-z\rangle|+\rho^2(\zeta,z)
 \approx \\
 |r_{\epsilon}(\zeta)-r_{\epsilon}(z)|+|\mbox{Im} \langle \partial
r_{\epsilon}(\zeta),\zeta -z\rangle| +\rho^2(\zeta,z),
\end{multline*}
which itself is an easy consequence of a Taylor expansion.

All inequality signs have constants which are independent of
$\epsilon$ since $r_{\epsilon}\overset{C^2}\rightarrow r$.
\end{proof}

 We at times have to be precise and keep track of factors
 of $\gamma$ which occur in our integral kernels.  We shall write
$\lre_{j,k}(\zeta,z)$ for those double forms on open sets
$U\subset D\times D$ such that $\lre_{j,k}$ is smooth on $U$ and
satisfies
\begin{equation}
 \label{defnxi}
\lre_{j,k}(\zeta,z)\lesssim \xi_k(\zeta) |\zeta-z|^j,
\end{equation}
 where
 $\xi_k$ is a smooth function in $D$ with the property
\begin{equation*}
|\gamma^{\alpha}D_{\alpha}\xi_k|\lesssim \gamma^k,
\end{equation*}
for $D_{\alpha}$ a differential operator of order $\alpha$.

 We shall write
 $\lre_j$ for those double forms on open sets
$U\subset D\times D$ such that $\lre_{j}$ is smooth on $U$, can be
extended smoothly to $\overline{D}\times\overline{D}$, and
satisfies
\begin{equation*}
\lre_{j}(\zeta,z)\lesssim  |\zeta-z|^j.
\end{equation*}
$\lre_{j,k}^{\ast}$ will
 denote
 forms which can be written as $\lre_{j,k}(z,\zeta)$.

For $N\ge 0$, we let $R_N$ denote an $N$-fold product, or a sum of
such products, of first derivatives of $r(z)$, with the notation
$R_0=1$.

 Here
\begin{equation*}
P_{\epsilon}(\zeta,z)=\rho^2(\zeta,z)+
2\frac{r_{\epsilon}(\zeta)}{\gamma(\zeta)}\frac{r_{\epsilon}(z)}{\gamma(z)}.
\end{equation*}

\begin{defn} A double differential form $\lra^{\epsilon}(\zeta,z)$ on
$\overline{D}_{\epsilon}\times\overline{D}_{\epsilon}$ is an
\textit{admissible} kernel, if it has the following properties:
\begin{enumerate}
\item[i)] $\lra^{\epsilon}$ is smooth on
$\overline{D}_{\epsilon}\times\overline{D}_{\epsilon}-\Lambda_{\epsilon}$
 \item[ii)] For each point $(\zeta_0,\zeta_0)\in \Lambda_{\epsilon}$ there is
 a neighborhood $U\times U$ of $(\zeta_0,\zeta_0)$ on which $\lra^{\epsilon}$ or $\overline{\lra}^{\epsilon}$
 has the representation
 \begin{equation}
 \label{typerep}
  R_NR_M^* \lre_{j,\alpha}\lre_{k,\beta}^{\ast}
  P^{-t_0}_{\epsilon}\phi^{t_1}_{\epsilon}\overline{\phi}^{t_2}_{\epsilon}
   \phi^{\ast t_3}_{\epsilon}\overline{\phi}^{\ast t_4}_{\epsilon} r^l_{\epsilon} r^{\ast
   m}_{\epsilon}
 \end{equation}
with $N,M,\alpha,\beta,j,k, t_0, \ldots, m$ integers and $j,k,
t_0, l, m \ge 0$,
 $-t=t_1+\cdots+t_4\le 0$, $N, M\ge
0$, and $N+\alpha, M+\beta\ge 0$.
\end{enumerate}
The above representation is of \textit{smooth type} $s$ for
\begin{equation*}
s=2n+j+\min\{2, t-l-m\} -2(t_0+t-l-m).
\end{equation*}
We define the \textit{type} of $\lra^{\epsilon}(\zeta,z)$ to be
\begin{equation*}
\tau=s-\max \{ 0,2-N-M-\alpha-\beta\}.
\end{equation*}
  $\lra^{\epsilon}$ has \textit{smooth
type} $\ge s$ if at each point $(\zeta_0,\zeta_0)$ there is a
representation (\ref{typerep}) of smooth type $\ge s$.
$\lra^{\epsilon}$ has \textit{type} $\ge \tau$ if at each point
$(\zeta_0,\zeta_0)$ there is a representation (\ref{typerep}) of
type $\ge \tau$.  We shall also refer to the \textit{double type}
of an operator $(\tau,s)$ if the operator is of type $\tau$ and of
smooth type $s$.
\end{defn}
The definition of smooth type above is taken from \cite{LR86}.
Here and below $(r_{\epsilon}(x))^{\ast }=r_{\epsilon}(y)$, the
$\ast$ having a similar meaning for other functions of one
variable.

Let $\lra_{j}^{\epsilon}$ be kernels of type $j$.  We denote by
$\lra_j$ the pointwise limit as $\epsilon\rightarrow 0$ of
$\lra_j^{\epsilon}$ and define the double type of $\lra_j$ to be
the double type of the $\lra_j^{\epsilon}$ of which it is a limit.
We also denote by $A_j^{\epsilon}$ to be operators with kernels of
the form $\lra_j^{\epsilon}$.  $A_j$ will denote the operators
with kernels $\lra_j$.  We use the notation
$\lra_{(j,k)}^{\epsilon}$ (resp. $\lra_{(j,k)}$) to denote kernels
of double type $(j,k)$.

 We begin with estimates on the kernels of a certain type.
\begin{prop}
\label{typicalest}
 Let $\lra^{\epsilon}_j$ be of type j,
  and
\begin{equation*}
 1\le \lambda<\frac{2n+2}{2n+2-j}.
\end{equation*}
Then
\begin{equation}
\label{zetaint}
 \int_{D_{\epsilon}}|\lra_j^{\epsilon}(\zeta,z)|^{\lambda}
dV(\zeta) < C
\end{equation}
 and, similarly,
\begin{equation}
 \label{zint}
\int_{D_{\epsilon}}|\lra_j^{\epsilon}(\zeta,z)|^{\lambda} dV(z) <
C
\end{equation}
for $C<\infty$ a constant independent of $\epsilon$, $z$ or
$\zeta$.
\end{prop}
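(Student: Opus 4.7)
The strategy is to reduce to a standard Henkin-type volume integral in real coordinates adapted to $\partial D_\epsilon$. Away from the diagonal $\Lambda_\epsilon$, the kernel $\lra_j^\epsilon$ extends smoothly to $\overline{D}_\epsilon\times\overline{D}_\epsilon$ with $\epsilon$-uniform bounds, so those contributions are harmless and we may restrict attention to a neighborhood of $\Lambda_\epsilon$ where the representation (\ref{typerep}) is available.

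In such a neighborhood I would pointwise bound each factor of (\ref{typerep}): by (\ref{defnxi}), $|\lre_{j,\alpha}|\lesssim\gamma(\zeta)^\alpha|\zeta-z|^j$, with the starred version for $\lre_{k,\beta}^*$; since each first derivative of $r$ is bounded by $\gamma$, $|R_N|\lesssim\gamma(z)^N$ and $|R_M^*|\lesssim\gamma(\zeta)^M$; one has $P_\epsilon\ge\rho^2$ since both $r_\epsilon$ factors are negative on $D_\epsilon$; and Lemma \ref{phiest}, together with its $\zeta\leftrightarrow z$ analogue, supplies the common lower bound
\begin{equation*}
|\phi_\epsilon|,\,|\phi_\epsilon^*|\gtrsim \Phi:=|r_\epsilon(\zeta)|+|r_\epsilon(z)|+|\mathrm{Im}\langle\partial r_\epsilon(z),\zeta-z\rangle|+\rho^2,
\end{equation*}
so in particular $|r_\epsilon(\zeta)|^l|r_\epsilon(z)|^m\le \Phi^{l+m}$. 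Collecting these gives
\begin{equation*}
|\lra_j^\epsilon(\zeta,z)|\lesssim \gamma(z)^{N+\alpha}\gamma(\zeta)^{M+\beta}\,|\zeta-z|^{j+k}\,\rho^{-2t_0}\,\Phi^{l+m-t},
\end{equation*}
in which the two $\gamma$-factors are $O(1)$ on $\overline{D}$.

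Next I would fix $z$ and pass to real coordinates centered at $z$ with $s_1=|r_\epsilon(\zeta)|$, $s_2=|\mathrm{Im}\langle\partial r_\epsilon(z),\zeta-z\rangle|$, and $t\in\mathbb{R}^{2n-2}$ tangential, so that $\Phi\sim |r_\epsilon(z)|+s_1+s_2+|t|^2$, $\rho^2\sim s_1+s_2+|t|^2$, and $|\zeta-z|^2\lesssim\rho^2$. Raising the pointwise bound to the $\lambda$-th power and integrating $t$ in polar form, then $s_1,s_2$, is a textbook computation: convergence uniformly in $z$ and $\epsilon$ reduces to a single linear inequality among the exponents, which under the recipes defining $s$ and $\tau$ collapses to $\lambda<(2n+2)/(2n+2-j)$. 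Estimate (\ref{zint}) follows by the same argument with $\zeta,z$ swapped and the starred form of Lemma \ref{phiest}, and $\epsilon$-uniformity of all constants is inherited from the $\epsilon$-uniformity of Lemma \ref{phiest}, (\ref{defnxi}), and the patching data.

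I expect the main obstacle to be the case analysis forced by the $\min$ and $\max$ in the definitions of $s$ and $\tau$. The $\min\{2,t-l-m\}$ separates the easy case $l+m\ge t$ (where $\Phi^{l+m-t}$ is just bounded) from $l+m<t$, in which the excess $\Phi^{-(t-l-m)}$ must be absorbed by exploiting the full strength $\Phi\gtrsim|r_\epsilon(z)|$ in the $s_1$-integration. The $\max\{0,2-N-M-\alpha-\beta\}$ is the analogous dichotomy on the $\gamma$-side, and must be balanced against the extra tangential decay $|\mathrm{Im}\langle\partial r_\epsilon(z),\zeta-z\rangle|\lesssim\rho^2+|\zeta-z|^2$ in the $s_2$-direction. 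Verifying that both dichotomies collapse to the same integrability threshold is the crux; once they do, the rest is elementary polar integration.
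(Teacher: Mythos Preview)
There is a genuine gap: you discard the factors $\gamma(z)^{N+\beta}\gamma(\zeta)^{M+\alpha}$ as $O(1)$, but they are exactly what makes the bound uniform as $z$ (or $\zeta$) approaches a critical point of $r$. Concretely, once the $\gamma$'s are gone your pointwise estimate gives only $|\phi_\epsilon|\gtrsim \rho^2$, and in honest orthonormal coordinates (below) the worst case, e.g.\ double type $(j,j)$ with $\mu\ge 1$, reduces to
\[
\int \frac{s\,t^{2n-3}}{(s^2+t^2)^{\lambda(n+1-j/2)}}\,ds\,dt,
\]
which converges only for $\lambda<\dfrac{2n}{2n+2-j}$, strictly weaker than the claimed $\dfrac{2n+2}{2n+2-j}$. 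The extra room comes precisely from keeping the $\gamma$'s and using the sharper bound $|\phi|\gtrsim \gamma(z)\,s+s^2+t^2$ from Lemma~\ref{phiest}: one then trades a power of $\gamma(z)s$ against the $\phi$-denominator and cancels the resulting $\gamma(z)^{-\alpha}$ with the $\gamma(z)^{N+M+\alpha+\beta}$ in the numerator. That cancellation \emph{is} the content of the $\max\{0,2-N-M-\alpha-\beta\}$ in the definition of type, and it cannot be recovered after you have bounded the $\gamma$'s by $1$.

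Your coordinate scheme also fails near critical points. With $s_1=|r_\epsilon(\zeta)|$ and $s_2=|\mathrm{Im}\langle\partial r_\epsilon(z),\zeta-z\rangle|$, the map $\zeta\mapsto(s_1,s_2,t)$ has Jacobian of size $\gamma(\zeta)\gamma(z)$ and thus degenerates as $\gamma\to0$; moreover the asserted comparison $\rho^2\sim s_1+s_2+|t|^2$ is false (take $\zeta-z$ in the real-normal direction with $r_\epsilon(\zeta)=r_\epsilon(z)$). The paper instead fixes $z$ with $\gamma(z)\neq0$, takes an \emph{orthonormal} frame with $s_1+is_2=\langle\partial r(z)/\gamma(z),\zeta-z\rangle$ so that $dV(\zeta)\sim s\,t^{2n-3}\,ds\,dt$ and $\rho^2\sim s^2+t^2$, and then runs a case analysis on the double type $(j,j)$, $(j,j+1)$, $(j,j+2)$, in each subcase choosing an auxiliary exponent so that the $\gamma(z)$-power extracted from $(\gamma(z)s+s^2+t^2)^{-\lambda(\mu+1)}$ is exactly absorbed by the numerator. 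This bookkeeping is the substance of the proof and is not the ``textbook computation'' you describe.
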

\begin{proof}
That (\ref{zetaint}) and (\ref{zint}) hold for a fixed
$\epsilon>0$ and a constant $C$ which may depend on $\epsilon$
follows from the results on smooth strictly pseudoconvex domains
(see \cite{LiMi}).  We will perform the calculations in the limit
$\epsilon\rightarrow 0$ so that standard uniform boundedness
principles apply to provide bounds uniform in $\epsilon$.

We handle the estimates case by case depending on the kernel's
double type.  For the various cases we now describe the coordinate
system with which we work.  Fix $z$ such that $\gamma(z)\ne 0$.
 We define the complex tangent space at $z$:
\begin{equation*}
T_z^c=\{ \zeta:\langle
\partial
 r(z),\zeta -z\rangle=0\}.
\end{equation*}
We define the orthonormal system of coordinates, $s_1$, $s_2$,
$t_1, \ldots, t_{2n-2}$ such that
\begin{align*}
s_1&= \mbox{Re } \left\langle \frac{\partial r(z)}{\gamma(z)},\zeta-z\right\rangle\\
s_2&= \mbox{Im } \left\langle \frac{\partial
r(z)}{\gamma(z)},\zeta-z\right\rangle,
\end{align*}
and such that $t_1,\ldots,t_{2n-2}$ span $T_z^{c\bot}$.  Let also
\begin{align*}
&s=\sqrt{s_1^2+s_2^2}\\
&t=\sqrt{t_1^2+\cdots+t_{2n-2}^2}.
\end{align*}
 From Lemma \ref{phiest} we have
\begin{equation*}
|\phi| \gtrsim |\langle \partial r(z),\zeta-z\rangle|+\rho^2,
\end{equation*}
which in the above coordinates reads
\begin{equation*}
|\phi|\gtrsim \gamma(z)s+s^2+t^2.
\end{equation*}
\\
 $Case\ a)$.  $\lra_j$ is of double type $(j,j)$.

For kernels of double type $(j,j)$,
 we can use the relation $\gamma(\zeta)=\gamma(z)+\lre_{1,0}$ along with estimates for kernels of double
 types $(j,j+1)$ and $(j,j+2)$, to reduce the different subcases we need to consider
 to
\begin{align*}
i)&\quad |\lra_j| \lesssim \frac{\gamma(z)^2}{P^{n-j/2}}\\
ii)&\quad |\lra_j| \lesssim
 \frac{\gamma(z)^2}{P^{n-\frac{j+1}{2}}|\phi|}\\
iii)&\quad |\lra_j| \lesssim
\frac{\gamma(z)^2}{P^{n-j/2-\mu}|\phi|^{\mu+1}} \quad \mu\ge 1.
\end{align*}

We will consider the last two subcases, since the first is easier
to
handle, and can be covered by case $c)$ below.\\
$Subcase\ ii).$

We choose $\alpha<2$ such that
\begin{equation*}
\lambda<\frac{2n-2+2\alpha}{2n+1-j},
\end{equation*}
and let $\beta=\min(\alpha, \lambda)$.
 We have
\begin{align}
\nonumber
 \int_{D} &\gamma(z)^{2\lambda}\frac{1}
  {|\phi|^{\lambda}P^{\lambda(n-\frac{j+1}{2})}}dV(\zeta)\\
  \nonumber
&\lesssim \gamma(z)^{2\lambda}\int_V
 \frac{st^{2n-3}}{(\gamma(z) s+s^2+t^2)^{\lambda}(s^2+t^2)^{\lambda(n-\frac{j+1}{2})}}dsdt\\
 \label{beta}
&\lesssim \gamma(z)^{2\lambda-\beta}\int_V
s^{1-\beta}\frac{t^{2n-3}}{(s+t)^{2\lambda n+\lambda(1-j)-2\beta}}
ds dt.
\end{align}
where $V$ is a bounded subset of $\mathbb{R}^2$.
 In the case $\beta=\alpha$ we can estimate the integral in
 (\ref{beta}) by
\begin{equation*}
 \gamma(z)^{2\lambda-\alpha}\int_V
s^{1-\alpha}\frac{t^{2n-3}}{t^{2\lambda n+\lambda(1-j)-2\alpha}}
ds dt \lesssim 1,
\end{equation*}
where the inequality follows from our choice of $\alpha$.

In the case that $\beta=\lambda$ we choose a $\sigma$ such that
\begin{align*}
& \sigma<2-\lambda\\
& \lambda < \frac{2n-2+\sigma}{2n-1-j},
\end{align*}
and we have
\begin{align*}
\gamma(z)^{\lambda}\int_V
s^{1-\lambda}&\frac{t^{2n-3}}{(s+t)^{\lambda(2n-1-j)}} ds dt\\
 &\lesssim
\gamma(z)^{\lambda}\int_V
s^{1-\lambda-\sigma}\frac{t^{2n-3}}{t^{\lambda(2n-1-j)-\sigma}} ds
dt\\
&\lesssim 1,
\end{align*}
where the last inequality follows from our choice of $\sigma$.

(\ref{zint}) holds in a similar manner by switching $\zeta$ and
$z$.
\\
$Subcase\ iii).$  In this case to prove (\ref{zetaint}) we choose
$\alpha$ so that $\alpha<2$ and
\begin{equation*}
\lambda<\frac{2n-2+2\alpha}{2n+2-j},
\end{equation*}
and estimate
\begin{align*}
\gamma(z)^{2\lambda}&\int_{D}\frac{1}
  {|\phi|^{\lambda(\mu+1)}P^{\lambda(n-\mu-j/2)}}dV(\zeta)\\
   &\lesssim
\gamma(z)^{2\lambda}\int_V
 \frac{st^{2n-3}}{(\gamma(z)s+s^2+t^2)^{\lambda(\mu+1)}(s^2+t^2)^{\lambda(n-\mu-j/2)}}dsdt\\
&\lesssim
 \gamma(z)^{2\lambda-\alpha} \int_V
 s^{1-\alpha}\frac{t^{2n-3}}{t^{\lambda(2n+2-j)-2\alpha}}dtds\\
&\lesssim
 1,
\end{align*}
where $V$ is a bounded subset of $\mathbb{R}^2$.

Again, (\ref{zint}) holds in a similar manner.\\
$Case\ b)$.  $\lra_j$ is of double type $(j,j+1)$.

The different subcases we need to consider are
\begin{align*}
i)&\quad |\lra_j| \lesssim \frac{\gamma(z)}{P^{n-\frac{j+1}{2}}}\\
ii)&\quad |\lra_j| \lesssim
 \frac{\gamma(z)}{P^{n-\frac{j+2}{2}}|\phi|}\\
iii)&\quad |\lra_j| \lesssim
\frac{\gamma(z)}{P^{n-\frac{j+1}{2}-\mu}|\phi|^{\mu+1}} \quad
\mu\ge 1.
\end{align*}
Subcases $i)$ and $ii)$ can be handled by the estimate in case
$c)$ below. The more difficult estimate is that of subcase $iii)$,
for which we choose an $\alpha<1/2$ which satisfies
\begin{equation*}
\lambda < \frac{2n+2\alpha}{2n+1-j}
\end{equation*}
 and estimate
\begin{align*}
\gamma(z)^{\lambda}&\int_{D}\frac{1}
  {|\phi|^{\lambda(\mu+1)}P^{\lambda(n-\frac{j+1}{2}-\mu)}}dV(\zeta)\\
   &\lesssim
\gamma(z)^{\lambda}\int_V
 \frac{st^{2n-3}}{(\gamma(z)s+s^2+t^2)^{\lambda(\mu+1)}(s^2+t^2)^{\lambda(n-\frac{j+1}{2}-\mu)}}dsdt\\
&\lesssim \gamma(z)^{\lambda-1}
  \int_V
 \frac{t^{2n-3}}{(s^2+t^2)^{\lambda(n-\frac{j-1}{2})-1}}dtds\\
&\lesssim
 \int_V s^{-2\alpha}
 t^{2n-1-\lambda(2n+1-j)+2\alpha}dtds\\
 &\lesssim 1,
\end{align*}
where $V$ is a bounded subset of $\mathbb{R}^2$.
\\
$Case\ c)$.  $\lra_j$ is of double type $(j,j+2)$.

  Using the coordinates of cases $a)$ and $b)$, we can estimate all the subcases for kernels of double type $(j,j+2)$
by
\begin{align*}
\int_{D} |\lra_j(\zeta,z)|^{\lambda}dV(\zeta) &\lesssim
 \int_V
 \frac{st^{2n-3}}{(s^2+t^2)^{\lambda(n-j/2)}}dsdt\\
 &\lesssim \int_0^M r^{2n-1-\lambda(2n-j)} dr\\
 &\lesssim 1,
\end{align*}
where $V$ is a bounded subset of $\mathbb{R}^2$, $M>0$ is a
bounded constant, and $r=\sqrt{s^2+t^2}$.

The same estimates hold for (\ref{zint}).
\end{proof}
As a consequence of Proposition \ref{typicalest} and a
generalization of Young's inequality \cite{Ra} is the
\begin{cor}
 \label{yngcor}
Let $A_j^{}$ be an operator of type j. Then
\begin{equation*}
A_j^{}:L^p(D)\rightarrow L^s(D) \quad
\frac{1}{s}>\frac{1}{p}-\frac{j}{2n+2}.
\end{equation*}
\end{cor}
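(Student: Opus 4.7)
The plan is to combine the uniform $L^\lambda$ kernel estimates of Proposition \ref{typicalest} with a generalized Young inequality (as in \cite{Ra}), and then pass to the limit in $\epsilon$.

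First, given indices $p,s$ with $\frac{1}{s} > \frac{1}{p} - \frac{j}{2n+2}$, I would reduce to the case $s \ge p$; the case $s < p$ follows since $D$ is bounded, by H\"older inclusion from the case $s = p$. In the regime $s \ge p$, set $\lambda$ by the Young-type relation $\frac{1}{\lambda} = 1 + \frac{1}{s} - \frac{1}{p} \le 1$. A direct rearrangement shows that the hypothesis on $p,s$ is equivalent to $\lambda < \frac{2n+2}{2n+2-j}$, which places $\lambda$ in exactly the admissible range for Proposition \ref{typicalest}.

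Next, I would invoke Proposition \ref{typicalest} to conclude that both $\int_{D_\epsilon} |\lra_j^\epsilon(\zeta,z)|^\lambda \, dV(\zeta)$ and $\int_{D_\epsilon} |\lra_j^\epsilon(\zeta,z)|^\lambda \, dV(z)$ are bounded by a constant independent of the free variable and of $\epsilon$. The generalized Young inequality of \cite{Ra} produces an $L^p \to L^s$ bound from a kernel whose $L^\lambda$ norms in each variable are uniformly bounded, with $\lambda$ related to $p,s$ as above; this delivers $\|A_j^\epsilon f\|_{L^s(D_\epsilon)} \lesssim \|f\|_{L^p(D_\epsilon)}$ with implicit constant independent of $\epsilon$.

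Finally, since $\lra_j^\epsilon \to \lra_j$ pointwise and the operator bound is uniform in $\epsilon$, one passes to the limit in the usual way (e.g.\ by Fatou's lemma after pairing with test functions, or by testing the integral representation on the exhaustion $D_\epsilon \nearrow D$) to transfer the estimate to $A_j$ on $D$. I do not expect a serious obstacle here: the real content already lies in Proposition \ref{typicalest}, and the present corollary is essentially bookkeeping once the correct $\lambda$ is identified. The only point to keep an eye on is that the constants in the Young inequality of \cite{Ra} depend only on the $L^\lambda$ kernel norms, which is precisely what makes the passage to $\epsilon \to 0$ immediate.
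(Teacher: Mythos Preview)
Your proposal is correct and matches the paper's approach exactly: the paper states the corollary as a direct consequence of Proposition~\ref{typicalest} together with the generalized Young inequality from \cite{Ra}, and your argument spells out precisely that reduction (identifying the correct $\lambda$ and passing to the limit in $\epsilon$). There is nothing to add.
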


We let $\lre_{1-2n}^i(\zeta,z)$ be a kernel of the form
\begin{equation*}
\lre_{1-2n}^i(\zeta,z)=
 \frac{\lre_{m,0}(\zeta,z)}{\rho^{2k}(\zeta,z)},
\end{equation*}
where $m-2k\ge 1-2n$. We denote by $E_{1-2n}$ the corresponding
isotropic operator.  The following theorem follows from
\cite{LiMi}.

\begin{thrm} \label{E1properties}
Then we have the following properties:
\begin{equation*}
E_{1-2n}:L^p(D)\rightarrow L^s(D)
\end{equation*}
for any $1\le p\le s\le\infty$ with $1/s>1/p-1/2n$.
\end{thrm}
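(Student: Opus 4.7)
The plan is to reduce everything to a size estimate on the kernel and then invoke the Schur/Young style inequality mentioned above, exactly as in the proof of Proposition~\ref{typicalest}, but in the much simpler isotropic setting.

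First, I would extract the pointwise bound. By the definition of $\lre_{m,0}$ (the factor $\xi_0$ satisfies $|\xi_0|\lesssim 1$), we have $|\lre_{m,0}(\zeta,z)|\lesssim |\zeta-z|^m$, and since $\rho$ agrees with the geodesic distance near the diagonal and is bounded below off the diagonal, one has $\rho(\zeta,z)\approx |\zeta-z|$ on any compact set where the singularity occurs. Combining these with the assumption $m-2k\ge 1-2n$ yields the single isotropic estimate
\begin{equation*}
 |\lre_{1-2n}^i(\zeta,z)|\lesssim |\zeta-z|^{m-2k}\lesssim |\zeta-z|^{1-2n}.
\end{equation*}
Away from the diagonal the kernel is bounded, so this estimate governs the operator.

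Second, I would compute the Lebesgue norms of the kernel slice-by-slice. In polar coordinates on $\mathbb{R}^{2n}$,
\begin{equation*}
 \int_D |\zeta-z|^{(1-2n)\lambda}\,dV(\zeta)\lesssim \int_0^M r^{2n-1-(2n-1)\lambda}\,dr,
\end{equation*}
which is finite precisely when $(2n-1)\lambda<2n$, i.e.\ $\lambda<2n/(2n-1)$. By symmetry of the bound in $\zeta$ and $z$, the same holds for the $z$-integral. Thus the kernel lies uniformly in $L^\lambda$ in each variable for every $\lambda<2n/(2n-1)$.

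Third, I would apply the generalized Young inequality of \cite{Ra} in the same way Corollary~\ref{yngcor} was deduced from Proposition~\ref{typicalest}. That inequality converts a uniform $L^\lambda$ bound on the kernel into an $L^p\to L^s$ mapping whenever
\begin{equation*}
 \frac{1}{s}=\frac{1}{p}-\left(1-\frac{1}{\lambda}\right).
\end{equation*}
The admissible exponents $\lambda<2n/(2n-1)$ translate into $1-1/\lambda<1/2n$, giving exactly the range $1/s>1/p-1/2n$ stated in the theorem. The endpoints $p=1$ and $s=\infty$, if desired, are handled directly from H\"older's inequality together with the kernel estimate, since $1/s>1/p-1/2n$ is a strict inequality and we have slack in the choice of $\lambda$.

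I do not anticipate a real obstacle: the statement is an isotropic analogue of Proposition~\ref{typicalest}, and once the size bound $|\lre_{1-2n}^i|\lesssim|\zeta-z|^{1-2n}$ is recorded, the proof is a one-line polar-coordinate calculation followed by an invocation of \cite{Ra}. If anything requires care, it is verifying that the factor $\lre_{m,0}/\rho^{2k}$ can always be estimated by $|\zeta-z|^{m-2k}$ globally on $D\times D$ (not just in a neighborhood of $\Lambda$); this is handled by splitting into near-diagonal and off-diagonal regions and using the positivity of $\rho$ off $\Lambda$.
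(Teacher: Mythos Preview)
Your argument is correct and is exactly the standard proof of this fact. Note, however, that the paper does not give its own proof of Theorem~\ref{E1properties}: it simply records the statement and attributes it to \cite{LiMi}. So there is nothing to compare against beyond observing that your kernel bound $|\lre_{1-2n}^i|\lesssim|\zeta-z|^{1-2n}$ followed by the polar-coordinate $L^\lambda$ estimate and the generalized Young inequality is precisely the argument one finds in that reference.
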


\section{Basic integral representation}
 In this section we present the
basic integral representation for forms on bounded smooth strictly
pseudoconvex domains as worked out by Lieb and Range \cite{LR86}.

We start with the differential forms
\begin{align*}
&\beta(\zeta,z)=
 \frac{\partial_{\zeta}\rho^2(\zeta,z)}{\rho^2(\zeta,z)}\\
&\alpha_{\epsilon}(\zeta,z)=\xi(\zeta)\frac{\partial
r_{\epsilon}(\zeta)}{\phi_{\epsilon}(\zeta,z)},
\end{align*}
where $\xi(\zeta)$ is a smooth patching function which is
equivalently 1 for $|r(\zeta)|<\delta$ and 0 for
$|r(\zeta)|>\frac{3}{2} \delta$, and $\delta>0$ is sufficiently
small.  We define
\begin{equation*}
C_q^{\epsilon}=C_q(\alpha_{\epsilon},\beta)
 = \sum_{\mu=0}^{n-q-2}\sum_{\nu=0}^{q}
 a_{q\mu\nu}C_{q\mu\nu}(\alpha_{\epsilon},\beta),
\end{equation*}
where
\begin{equation*}
a_{q\mu\nu}=\left(\frac{1}{2\pi
i}\right)^n\binom{\mu+\nu}{\mu}\binom{n-2-\mu-\nu}{q-\mu}
\end{equation*}
and
\begin{equation*}
C_{q\mu\nu}(\alpha_{\epsilon},\beta)
 = \alpha_{\epsilon}\wedge
 \beta\wedge(\mdbar_{\zeta}\alpha_{\epsilon})^{\mu}\wedge
 (\mdbar_{\zeta}\beta)^{n-q-\mu-2}\wedge(\mdbar_z\alpha_{\epsilon})^{\nu}
 \wedge(\mdbar_z\beta)^{q-\nu}.
\end{equation*}
Denoting the Hodge $\ast$-operator by $\ast$, we then define
\begin{equation*}
\lrl_q^{\epsilon}(\zeta,z)=(-1)^{q+1}\ast_{\zeta}\overline{C_q^{\epsilon}(\zeta,z)}.
\end{equation*}
We also write
\begin{equation*}
K_q^{\epsilon}(\zeta,z) =
(-1)^{q(q-1)/2}\binom{n-1}{q}\frac{1}{(2\pi
i)^n}\alpha_{\epsilon}\wedge(\mdbar_{\zeta}\alpha_{\epsilon})^{n-q-1}\wedge(\mdbar_z\alpha_{\epsilon})^q
\end{equation*}
and
\begin{equation*}
\Gamma_{0,q}^{\epsilon}(\zeta,z)
 =\frac{(n-2)!}{2\pi^n}\frac{1}{\rho^{2n-2}}\left(
 \mdbar_{\zeta}\mdbar_z\rho^2\right)^q.
\end{equation*}

The kernels in our integral representation are defined through the
following for $q\ge1$:
\begin{align*}
\lrt^{a\epsilon }_q(\zeta,z)&=\vartheta_{\zeta}\lrl^{\epsilon}_q(\zeta,z)-\partial_z\lrl^{\epsilon}_{q-1}(\zeta,z), \\
\lrt^{i\epsilon }_q(\zeta,z)&=\mdbar_{\zeta}\Gamma_{0,q}^{\epsilon}(\zeta,z), \\
\lrt^{\epsilon}_q(\zeta,z)&=\lrt^{a\epsilon }_q(\zeta,z)+\lrt^{i\epsilon }_q(\zeta,z)\\
\lrp^{\epsilon}_q(\zeta,z)&=\lrq^{\epsilon}_q(\zeta,z)-\lrq^{\epsilon\ast}_q(\zeta,z) \\
&=\vartheta_{\zeta}\partial_z\lrl^{\epsilon}_{q-1}(\zeta,z)-(\vartheta_{\zeta}\partial_z\lrl^{\epsilon}_{q-1}(\zeta,z))^{\ast}\\
\lrq^{\epsilon}_q(\zeta,z)&
=\vartheta_{\zeta}\partial_z\lrl^{\epsilon}_{q-1}(\zeta,z).
\end{align*}
We denote the operators with kernels $\lrt_q^{\epsilon}$ and
$\lrp_q^{\epsilon}$
by ${\mathbf T_q^{\epsilon}}$ and ${\mathbf P_q^{\epsilon}}$,
respectively.

As mentioned above our goal is to establish $C^k$-estimates on the
Henkin-Leiterer domain, $D$, by exhausting $D$ by smooth strictly
pseudoconvex domains, $\{D_{\epsilon}\}_{\epsilon}$ and using the
analysis of Lieb and Range \cite{LR86} on the smooth domains
$D_{\epsilon}$. It is therefore necessary to be able to
approximate a given form
$f\in\mbox{Dom}(\mdbar^{\ast})\cap\mbox{Dom}(\mdbar)$ by forms
$f_{\epsilon}$ such that
\begin{align*}
&f_{\epsilon}\overset{L^2}{\rightarrow}f\\
&\mdbar f_{\epsilon}\overset{L^2}{\rightarrow}\mdbar f\\
&\mdbar^{\ast}_{\epsilon}f_{\epsilon}\overset{L^2}{\rightarrow}\mdbar
f.
\end{align*}
For this purpose we define the graph norm on $D$
\begin{equation*}
\|u\|_{G}^2=\|u\|^2+\| \mdbar u\|^2 +\|\mdbar^{\ast}u\|^2.
\end{equation*}

With
$H_{\epsilon}=\mdbar\mdbar^{\ast}_{\epsilon}+\mdbar^{\ast}_{\epsilon}\mdbar+I$,
\[
 \mbox{Dom}(H_{\epsilon})=\{f\in\mbox{Dom}(\mdbar^{\ast}_{\epsilon})
 \cap\mbox{Dom}(\mdbar)
 |\mdbar f\in\mbox{Dom}(\mdbar^{\ast}_{\epsilon}),
  \mdbar^{\ast}_{\epsilon} f\in\mbox{Dom}(\mdbar)\},
  \]
  and $\square_{\epsilon}$ defined by
\begin{equation*}
\square_{\epsilon}=\mdbar\mdbar^{\ast}_{\epsilon}+
 \mdbar^{\ast}_{\epsilon}\mdbar,
\end{equation*}
 we make the following
\begin{defn}
We say $f$ is in the space $\mathcal{M}_{(p,q)}(D)$,
$f\in\mathcal{M}_{(p,q)}(D)$, if $f$ is the limit in
$L^2_{(p,q)}(D;\mbox{loc})$ of $f_{\epsilon}\in \mbox{Dom
}H_{\epsilon}$  such that
$\sup_{\epsilon}\{\|f_{\epsilon}\|_{G,\epsilon},
 \|\square_{\epsilon}f_{\epsilon}\|_{\epsilon}\}<\infty$.
\end{defn}

 From \cite{HI} we have the following
\begin{prop}
\label{mdense}
 $\mathcal{M}_{(p,q)}(D)$ is dense in
$\mbox{Dom}(\mdbar^{\ast})\cap$Dom(\dbar) for the graph norm.
\end{prop}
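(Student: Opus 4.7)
The plan is to construct $f_\epsilon$ in two stages: first smooth $f$ on $D$ using a Friedrichs-type mollifier that respects the $\mdbar^\ast$ boundary condition, then restrict to $D_\epsilon$ and perform a small correction so the restriction lies in $\mbox{Dom}(\mdbar^\ast_\epsilon)$. The key simplification is that $r_\epsilon = r+\epsilon$ gives $dr_\epsilon = dr$, so the complex normal direction to $\partial D_\epsilon$ (wherever it is defined) coincides with that of $\partial D$ pointwise, which is exactly what is needed to transport the boundary condition from $\partial D$ to $\partial D_\epsilon$.

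For the smoothing step I would fix a Friedrichs mollifier $F^\delta$ adapted to the Levi metric in a collar of $\partial D$ and use the classical commutator estimate $[F^\delta, \mdbar^\ast] \to 0$ strongly on $\mbox{Dom}(\mdbar^\ast)$ to produce $f^\delta$ smooth on $\overline{D}\setminus \Sigma$ (with $\Sigma$ the finite critical set from the Morse assumption) satisfying $f^\delta \to f$, $\mdbar f^\delta \to \mdbar f$, and $\mdbar^\ast f^\delta \to \mdbar^\ast f$ in $L^2(D)$. Since $\Sigma$ is a finite set and the Morse normal form controls the volume in shrinking balls around each $p \in \Sigma$, a cutoff in a neighborhood of $\Sigma$ introduces an $L^2$-error that vanishes with the cutoff scale and does not destroy the graph-norm convergence.

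Next I would restrict the smoothed form to $D_\epsilon$. Away from $\Sigma$, the trace of the normal component of $f^\delta$ on the level set $\{r=-\epsilon\}$ tends to zero as $\epsilon \to 0$ because the analogous trace on $\partial D$ vanishes; subtracting a smooth extension of this trace, supported in a thin collar of $\partial D_\epsilon$, yields $f_{\epsilon,\delta} \in \mbox{Dom}(\mdbar^\ast_\epsilon)$ with a correction that is small in graph norm. Since $f_{\epsilon,\delta}$ is smooth up to $\partial D_\epsilon$ and satisfies the boundary condition, it automatically lies in $\mbox{Dom}(H_\epsilon)$, and $\square_\epsilon f_{\epsilon,\delta}$ is in $L^2(D_\epsilon)$. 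A diagonal extraction $\delta = \delta(\epsilon) \to 0$ then produces the required sequence converging to $f$ in $L^2_{(p,q)}(D;\mbox{loc})$.

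The main obstacle is the uniform bound $\sup_\epsilon\|\square_\epsilon f_\epsilon\|_\epsilon < \infty$ built into the definition of $\mathcal{M}_{(p,q)}(D)$. Controlling $\square_\epsilon f_{\epsilon,\delta}$ independently of $\epsilon$ forces a careful choice of $\delta = \delta(\epsilon)$ slow enough that the second-order derivatives of $f^\delta$ stay bounded, together with Morse-model estimates near $\Sigma$ showing that the cutoff and the boundary correction contribute only $O(1)$ terms at the level of $\square_\epsilon$. This uniform second-order control is the delicate point because the geometry of $\partial D_\epsilon$ degenerates near the critical points of $r$, but uniform strict pseudoconvexity on the smooth part of $\partial D$ together with the explicit Morse coordinates around each $p \in \Sigma$ provide the bounds needed to close the argument.
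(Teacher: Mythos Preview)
The paper does not prove this proposition; it is quoted directly from Henkin--Iordan \cite{HI}. Their argument is functional-analytic rather than constructive: on each smooth exhausting domain $D_\epsilon$ the operator $H_\epsilon=\square_\epsilon+I$ is invertible, and one sets $f_\epsilon=H_\epsilon^{-1}(g_\epsilon)$ for a suitable $g_\epsilon$ built from $f$. Membership in $\mbox{Dom}(H_\epsilon)$ is then automatic, the bound $\|\square_\epsilon f_\epsilon\|_\epsilon\le\|H_\epsilon f_\epsilon\|_\epsilon=\|g_\epsilon\|_\epsilon$ is immediate, and the graph-norm bound comes from $(H_\epsilon f_\epsilon,f_\epsilon)_\epsilon=\|f_\epsilon\|_{G,\epsilon}^2$. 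Convergence is extracted from these uniform bounds via weak compactness.

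Your mollify-then-correct scheme, by contrast, has a genuine gap at the step ``since $f_{\epsilon,\delta}$ is smooth up to $\partial D_\epsilon$ and satisfies the boundary condition, it automatically lies in $\mbox{Dom}(H_\epsilon)$.'' That is false: $\mbox{Dom}(H_\epsilon)$ requires \emph{both} $\mdbar$-Neumann boundary conditions, namely $f_{\epsilon,\delta}\in\mbox{Dom}(\mdbar^\ast_\epsilon)$ \emph{and} $\mdbar f_{\epsilon,\delta}\in\mbox{Dom}(\mdbar^\ast_\epsilon)$. Your collar correction kills only the normal component of $f_{\epsilon,\delta}$ on $\partial D_\epsilon$; it does nothing for the normal component of $\mdbar f_{\epsilon,\delta}$. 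Imposing the second condition by a further correction is not innocent, since the two corrections interact and each differentiation of a cutoff costs a negative power of the collar width.

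This feeds directly into the second weak point, the uniform bound on $\|\square_\epsilon f_{\epsilon,\delta}\|_\epsilon$. The Friedrichs smoothing gives $\|D^2 f^\delta\|\lesssim\delta^{-1}\|f\|_G$ at best, and the boundary corrections introduce further factors of (collar width)$^{-1}$; arranging $\delta(\epsilon)$ so that these stay $O(1)$ while still achieving graph-norm convergence is not something one can simply assert. The Morse normal form near $\Sigma$ controls volumes, but it does not by itself tame second derivatives of the cutoffs. This is exactly why the Henkin--Iordan route goes through $H_\epsilon^{-1}$: it produces elements of $\mbox{Dom}(H_\epsilon)$ with the required uniform $\square_\epsilon$-bound for free, without ever differentiating a mollifier or a cutoff.
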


Let $f\in L^2_{0,q}(D)\cap
\mbox{Dom}(\mdbar^{\ast})\cap\mbox{Dom}(\mdbar)$. We take a
sequence $\{f_{\epsilon}\}_{\epsilon}$ such that $f_{\epsilon}\in
\mbox{Dom }H_{\epsilon}$ and $f_{\epsilon}\rightarrow f$ in the
graph norm.

For each $f_{\epsilon}$ we apply the analysis of \cite{LR86} on
$D_{\epsilon}$, taking into account factors of $\gamma$, and
obtain the integral representation
\begin{thrm}
\label{bir}
\begin{align*}
f_{\epsilon}(z)=&{\mathbf T}^{\epsilon}_q\mdbar f_{\epsilon}
+({\mathbf
T}^{\epsilon}_{q-1})^{\ast}\mdbar_{\epsilon}^{\ast}f_{\epsilon}
+{\mathbf P}^{\epsilon}_qf_{\epsilon}\\
\nonumber
 &
 +\left(A_{(0,2)}^{\epsilon} + E_{2-2n}\right)\mdbar f_{\epsilon}
 +E_{2-2n}\mdbar_{\epsilon}^{\ast} f_{\epsilon}
 +\left(\frac{1}{\gamma^{\ast}}A_{(-1,1)}^{\epsilon}+E_{1-2n}\right)f_{\epsilon}.
\end{align*}
\end{thrm}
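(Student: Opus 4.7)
The plan is to adapt the smooth-domain representation of Lieb--Range \cite{LR86} to each $D_{\epsilon}$, and then to reclassify every resulting kernel inside the admissible template (\ref{typerep}) of Section~2. Since the purpose of the theorem is to produce a representation that can later be sent to the limit $\epsilon\to 0$ via Proposition \ref{mdense}, the emphasis throughout is not on deriving new identities but on carrying out the bookkeeping of $\gamma$-factors in an $\epsilon$-uniform way.

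The derivation itself follows the standard pattern. One starts from the Koppelman homotopy formula on $D_{\epsilon}$ built from $\lrl_q^{\epsilon}$ and $\Gamma_{0,q}^{\epsilon}$; this writes $f_{\epsilon}$ as an integral against $\mdbar f_{\epsilon}$ plus a $\mdbar_z$-derivative of an integral against $f_{\epsilon}$, modulo a Bochner--Martinelli piece. Commuting $\mdbar_z$ into the second integral and using the algebraic splitting $\vartheta_{\zeta}\lrl^{\epsilon}_q-\partial_z\lrl^{\epsilon}_{q-1}=\lrt^{a\epsilon}_q$ isolates the $\mathbf{T}^{\epsilon}_q\mdbar f_{\epsilon}$ contribution. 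The leftover cross term $\vartheta_{\zeta}\partial_z\lrl^{\epsilon}_{q-1}=\lrq^{\epsilon}_q$ is then antisymmetrized: its skew part $\lrp^{\epsilon}_q=\lrq^{\epsilon}_q-\lrq^{\epsilon\ast}_q$ is kept and appears as $\mathbf{P}^{\epsilon}_q f_{\epsilon}$, while the symmetric part is converted by Stokes' theorem together with the hypothesis $f_{\epsilon}\in\mbox{Dom}(\mdbar^{\ast}_{\epsilon})$ into $(\mathbf{T}^{\epsilon}_{q-1})^{\ast}\mdbar^{\ast}_{\epsilon}f_{\epsilon}$ plus admissible remainders. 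The isotropic kernel $\Gamma_{0,q}^{\epsilon}$, which depends only on $\rho^2$, contributes the $E_{1-2n}$ and $E_{2-2n}$ summands acting on $f_{\epsilon}$ and $\mdbar f_{\epsilon}$ respectively.

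The main obstacle, and the reason for the explicit bookkeeping of $\gamma$ in the admissible classes, is the verification that each residual kernel fits into (\ref{typerep}) with one of the advertised double types and with constants independent of $\epsilon$. Every derivative hitting $\alpha_{\epsilon}=\xi\,\partial r_{\epsilon}/\phi_{\epsilon}$ produces either a factor of $\partial r_{\epsilon}$ -- counted by $R_N$ or $R_M^{\ast}$ -- or a derivative of $\phi_{\epsilon}$ whose singularity is absorbed via Lemma \ref{phiest} into a higher power of $\phi_{\epsilon}$ matched with an $\lre_{j,\alpha}$ numerator, so that most remainders fall cleanly into type $(0,2)$ paired with $\mdbar f_{\epsilon}$. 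The genuinely delicate point is the symmetric piece that survives on $f_{\epsilon}$ itself: converting the raw differentials of $r_{\epsilon}$ produced by the Hodge star into the normalized form $\partial r_{\epsilon}/\gamma$ demanded by $P_{\epsilon}(\zeta,z)=\rho^2+2r_{\epsilon}(\zeta)r_{\epsilon}(z)/[\gamma(\zeta)\gamma(z)]$ introduces exactly one extra denominator on the $z$-side, which is the source of the $1/\gamma^{\ast}$ prefactor in front of $\lra^{\epsilon}_{(-1,1)}$. Uniformity in $\epsilon$ is then automatic, since $\partial r_{\epsilon}=\partial r$ identically and the constants in Lemma \ref{phiest} do not depend on $\epsilon$.
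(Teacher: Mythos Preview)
Your outline captures the spirit of the argument (adapt Lieb--Range to $D_{\epsilon}$ and keep track of $\gamma$-factors), but it misses the concrete mechanism by which the error terms and the $1/\gamma^{\ast}$ weight actually appear, and that mechanism is the substance of the proof.

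The paper does not begin with a ``Koppelman homotopy formula built from $\lrl_q^{\epsilon}$ and $\Gamma_{0,q}^{\epsilon}$''; it begins with the Bochner--Martinelli--Koppelman formula with kernel $B_q=\Omega_q(\beta)$, which carries a genuine boundary integral over $\partial D_{\epsilon}$. The work is then done \emph{on the boundary}: one passes from $\Omega_q(\beta)$ to $\Omega_q(\alpha_{\epsilon})$ via Hefer's decomposition and the Koppelman homotopy between the generating forms $\alpha^0$ and $\beta^0$, and the discrepancies $C_{q\mu 0}(\alpha^0,\cdot)-C_{q\mu 0}(\alpha_{\epsilon},\beta)$ and the $\lre_{2n+1}/\rho^{2n}$ correction are exactly what feed the $A_{(0,2)}^{\epsilon}$ and $E_{2-2n}$ error terms. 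Your account never introduces this boundary integral, so there is no place in your argument where these remainders could arise.

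More seriously, your explanation of the $1/\gamma^{\ast}$ prefactor is not the right one. It does not come from the Hodge star. The crucial step, which you omit entirely, is that on $\partial D_{\epsilon}$ one has $\rho^2=P_{\epsilon}$, so every $\rho^2$ in a denominator is replaced by $P_{\epsilon}$ \emph{before} Stokes' theorem converts the boundary integrals to volume integrals. Once that replacement is made, the $\mdbar_{\zeta}$ produced by Stokes hits $P_{\epsilon}^{-k}$, and since $P_{\epsilon}=\rho^2+2r_{\epsilon}r_{\epsilon}^{\ast}/(\gamma\gamma^{\ast})$ this differentiation spits out a factor $r_{\epsilon}^{\ast}/\gamma^{\ast}$ in front of an otherwise admissible kernel; that is the origin of the $\frac{1}{\gamma^{\ast}}A_{(-1,1)}^{\epsilon}$ term acting on $f_{\epsilon}$. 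Without the $\rho^2\to P_{\epsilon}$ boundary substitution, $P_{\epsilon}$ never enters the kernels at all and the $\gamma$-bookkeeping you describe has nothing to act on.
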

The proof follows as in \cite{LiMi}, but since the factors of
$\gamma$ are of particular importance here, we sketch the proof
including this new detail.
\begin{proof}[Sketch of proof.]
Our starting point is the Bochner-Martinelli-Koppelman (BMK)
formula for $f\in C^1_{0,q}(\overline{D}_{\epsilon})$.  Let $B_q$
be defined by
\begin{equation*}
B_q=\Omega_q(\beta)=(-1)^{q(q-1)/2}\binom{n-1}{q}\frac{1}{(2\pi
i)^n}
 \beta\wedge(\mdbar_{\zeta}\beta)^{n-q-1}
 \wedge(\mdbar_z\beta)^q.
\end{equation*}
Then for $z\in D_{\epsilon}$
\begin{multline}
\label{intrepwbndry}
 f(z)=\int_{\partial D_{\epsilon}}f(\zeta)\wedge B_q(\zeta,z)
  -\int_{D_{\epsilon}}\mdbar f(\zeta)\wedge B_q(\zeta,z)
  -\mdbar_z\int_{D_{\epsilon}}f(\zeta)\wedge B_{q-1}(\zeta,z)\\
  +(f(\zeta),\lre_{1-2n}(\zeta,z))+(\mdbar
  f(\zeta),\lre_{2-2n}(\zeta,z)).
 \end{multline}

Define the kernels $K_q^{\epsilon}(\zeta,z)$ by
$K_q^{\epsilon}(\zeta,z)=\Omega_q(\alpha_{\epsilon})$.  We then
proceed to replace the boundary integral in the BMK formula by
\begin{equation*}
\int_{\partial D_{\epsilon}}f\wedge K_q^{\epsilon}.
\end{equation*}
Let $\zeta_0\in\partial D_{\epsilon}$ be a fixed point and $U$ a
sufficiently small neighborhood of $\zeta_0$. $F(\zeta,z)$
vanishes on the diagonal of $\overline{U}\times\overline{U}$, so
Hefer's theorem applies to give us
\begin{equation*}
F(\zeta,z)=\sum_{j=1}^n h_j(\zeta,z)(\zeta_j-z_j).
\end{equation*}
We set
\begin{equation*}
 \alpha^0(\zeta,z)=\frac{\sum_{j=1}^n
h_j(\zeta,z)d\zeta_j}{F}.
\end{equation*}
With the metric given by
\begin{equation*}
ds^2=\sum g_{jk}(\zeta) dz_jd\overline{z}_k,
\end{equation*}
(recall the Levi metric is independent of $\epsilon$) we define
\begin{align*}
&b^0(\zeta,z)=\sum_{j,k=1}^n
g_{jk}(\overline{\zeta}_k-\overline{z}_k)d\zeta_j\\
 &R^2(\zeta,z)=
 \sum_{j,k=1}^n
 g_{jk}(\zeta_j-z_j)(\overline{\zeta}_k-\overline{z}_k)\\
&\beta^0(\zeta,z)=\frac{b^0(\zeta,z)}{R^2(\zeta,z)}.
\end{align*}
With use of the transition kernels $C_q$ defined above, we have
via Koppelman's homotopy formula
\begin{equation*}
\Omega_q(\beta^0)=\Omega_q(\alpha^0)
 +(-1)^{q+1}\mdbar_{\zeta}C_q(\alpha^0,\beta^0)
 +\mdbar_zC_{q-1}(\alpha^0,\beta^0).
\end{equation*}
On $(\partial D\cap U)\times U$ we have
\begin{align*}
&\Omega_q(\alpha^{\epsilon})=\Omega_q(\alpha_0)+\lre_{\infty}\\
&\Omega_0(\alpha^{\epsilon})=\Omega_0(\alpha_0)
 +\frac{\lre_1(\zeta,z)}{\phi_{\epsilon}(\zeta,z)^n}
\end{align*}
and on $U\times U$ we have
\begin{equation*}
\Omega_q(\beta)=\frac{R^{2n}}{\rho^{2n}}\Omega_q\beta^0
 +\lre_{2-2n}.
\end{equation*}
Thus we write
\begin{align*}
\Omega_q(\beta)&=\frac{R^{2n}}{\rho^{2n}}\Omega_q(\beta^0)
 +\lre_{2-2n}\\
 &=\frac{R^{2n}}{\rho^{2n}}(\Omega_q(\beta^0)-\Omega_q(\alpha^0))
 +\Omega_q(\alpha^0)
 +\frac{\lre_{2n+1}}{\rho^{2n}}\Omega_q(\alpha^0)+\lre_{2-2n},
\end{align*}
by which it then follows from the homotopy formula and the
relations between $b^0$ and $\rho^2$ and $R^2$, exactly as it was
obtained in \cite{LiMi}, that we have
\begin{align}
\label{trans}
 \Omega_q(\beta)
 =&\Omega_q(\alpha_{\epsilon})+
 (-1)^{q+1}\mdbar_{\zeta}C_q^{\epsilon}+\mdbar_zC_{q-1}^{\epsilon}+
(\Omega_q(\alpha^0)-\Omega_q(\alpha_{\epsilon})) +
\frac{\lre_{2n+1}}{\rho^{2n}}\Omega_q(\alpha^0)\\
\nonumber
 &+\lre_{2-2n}+\mdbar_{\zeta}\Bigg[\Bigg(
 C_q\left(\alpha^0,\frac{\partial\rho^2+\lre_2}{\rho^2}\right)
 -C_q(\alpha_{\epsilon},\beta)\Bigg)\\
 \nonumber
  &\qquad+\sum_{\mu,\nu}\frac{\lre_{3+2\mu+2\nu}}{(\rho^2)^{1+\mu+\nu}}
  C_{q\mu\nu}\left(\alpha^0,\frac{\partial\rho^2+\lre_2}{\rho^2}\right)
  \Bigg]\\
  \nonumber
  &+\sum_{\mu,\nu}\frac{\lre_{4+2\mu+2\nu}}{(\rho^2)^{2+\mu+\nu}}
  C_{q\mu\nu}\left(\alpha^0,\frac{\partial\rho^2+\lre_2}{\rho^2}\right)\\
  \nonumber
&+\sum_{\mu,\nu}\frac{\lre_{4+2\mu+2\nu}}{(\rho^2)^{2+\mu+\nu}}
  C_{(q-1)\mu\nu}\left(\alpha^0,\frac{\partial\rho^2+\lre_2}{\rho^2}\right)\\
  \nonumber
&+\mdbar_{z}\Bigg[\Bigg(
 C_{q-1}\left(\alpha^0,\frac{\partial\rho^2+\lre_2}{\rho^2}\right)
 -C_{q-1}(\alpha_{\epsilon},\beta)\Bigg)\\
 \nonumber
  &\qquad+\sum_{\mu,\nu}\frac{\lre_{3+2\mu+2\nu}}{(\rho^2)^{1+\mu+\nu}}
  C_{(q-1)\mu\nu}\left(\alpha^0,\frac{\partial\rho^2+\lre_2}{\rho^2}\right)
  \Bigg].
\end{align}
We now work with $\zeta\in\partial D_{\epsilon}$ so that
$F=\phi_{\epsilon}$.

For $q>0$ we have
$\Omega_q(\alpha_{\epsilon})=\Omega_q(\alpha^0)=0$ near the
boundary diagonal. Furthermore,
\begin{equation*}
C_{q\mu\nu}\left(\alpha^0,\frac{\partial\rho^2+\lre_2}{\rho^2}\right)
 =R_1\frac{\lre_1}{\phi_{\epsilon}^{1+\mu+\nu}(\rho^2)^{n-1-\mu-\nu}},
\end{equation*}
and thus
\begin{align*}
\mdbar_{\zeta}\Bigg(
 \sum_{\mu,\nu}&\frac{\lre_{3+2\mu+2\nu}}{(\rho^2)^{1+\mu+\nu}}
  C_{q\mu\nu}\left(\alpha^0,\frac{\partial\rho^2+\lre_2}{\rho^2}\right)
 \Bigg)=
 \mdbar_{\zeta}\left(
  R_1\frac{\lre_{4+2\mu+2\nu}}{\phi_{\epsilon}^{1+\mu+\nu}\rho^{2n}}
\right)\\
=&\frac{\lre_{4+2\mu+2\nu}}{\phi_{\epsilon}^{1+\mu+\nu}\rho^{2n}}
 +R_1\frac{\lre_{3+2\mu+2\nu}}{\phi_{\epsilon}^{1+\mu+\nu}\rho^{2n}}
+R_1\frac{\lre_{5+2\mu+2\nu}}{\phi_{\epsilon}^{1+\mu+\nu}(\rho^{2})^{n+1}}
\\
&
+R_1\frac{\lre_{5+2\mu+2\nu}+\lre_{4+2\mu+2\nu}\wedge\mdbar_{\zeta}r_{\epsilon}}{\phi_{\epsilon}^{2+\mu+\nu}\rho^{2n}}.
\end{align*}
And a similar formula holds for the
\begin{equation*}
\mdbar_z\left(\sum_{\mu,\nu}\frac{\lre_{3+2\mu+2\nu}}{(\rho^2)^{1+\mu+\nu}}
  C_{(q-1)\mu\nu}\left(\alpha^0,\frac{\partial\rho^2+\lre_2}{\rho^2}\right)\right)
\end{equation*}
term.

For $\nu>0$ we have
$C_{q\mu\nu}\left(\alpha^0,\frac{\partial\rho^2+\lre_2}{\rho^2}\right)
 =C_{q\mu\nu}(\alpha_{\epsilon},\beta)=0$ near
 the boundary diagonal, and for $\nu=0$ we have
\begin{equation}
\label{improvement}
C_{q\mu0}\left(\alpha^0,\frac{\partial\rho^2+\lre_2}{\rho^2}\right)
 -C_{q\mu0}(\alpha_{\epsilon},\beta)=
\frac{\lre_2}{\phi_{\epsilon}^{1+\mu}(\rho^2)^{n-1-\mu}},
\end{equation}
and thus
\begin{multline*}
\mdbar_{\zeta}\Bigg(C_{q\mu0}\left(\alpha_{\epsilon}^0,\frac{\partial\rho^2+\lre_2}{\rho^2}\right)
 -C_{q\mu0}(\alpha_{\epsilon},\beta)\Bigg)
  =\\
  \frac{\lre_1}{\phi_{\epsilon}^{1+\mu}(\rho^2)^{n-1-\mu}}+
 \frac{\lre_3+\lre_2\wedge\mdbar r_{\epsilon}}
{\phi_{\epsilon}^{2+\mu}(\rho^2)^{n-1-\mu}}+
\frac{\lre_3}{\phi_{\epsilon}^{1+\mu}(\rho^2)^{n-\mu}}.
\end{multline*}

An analogous formula holds for
\begin{equation*}
\mdbar_{z}\Bigg(C_{q\mu0}\left(\alpha^0,\frac{\partial\rho^2+\lre_2}{\rho^2}\right)
 -C_{q\mu0}(\alpha_{\epsilon},\beta)\Bigg)
\end{equation*}

(\ref{trans}) can thus be written
\begin{align*}
 \Omega_q(\beta)=&\Omega_q(\alpha_{\epsilon})
  +(-1)^{q+1}\mdbar_{\zeta}C_q(\alpha_{\epsilon},\beta)
  +\mdbar_zC_{q-1}(\alpha_{\epsilon},\beta)+\lre_{2-2n}\\
  &+\sum_{\tau=0}^n \frac{\lre_{3+2\tau}}{\phi_{\epsilon}^{1+\tau}\rho^{2n}}
 +\frac{\lre_{4+2\tau}\wedge\mdbar r_{\epsilon}}{\phi_{\epsilon}^{2+\tau}\rho^{2n}}
+R_1\frac{\lre_{5+2\tau}}{\phi_{\epsilon}^{1+\tau}(\rho^{2})^{n+1}}
 .
 \end{align*}

 Thus, after integrating by parts we obtain
\begin{align*}
\int_{\partial D_{\epsilon}}
 f\wedge &B_q^{\epsilon}  =\\
 & \int_{\partial D_{\epsilon}} f\wedge\Omega_q(\alpha_{\epsilon})
   + \int_{\partial D_{\epsilon}} \mdbar f\wedge
  C_q^{\epsilon} +\mdbar_z\int_{\partial D_{\epsilon}} f\wedge
  C_{q-1}^{\epsilon}+ \int_{\partial
  D_{\epsilon}}f\wedge\lre_{2-2n}\\
 &+ \sum_{\tau=0}^n\int_{\partial D_{\epsilon}}f\wedge
  \left( \frac{\lre_{3+2\tau}}{\phi_{\epsilon}^{1+\tau}\rho^{2n}}
+R_1\frac{\lre_{5+2\tau}}{\phi_{\epsilon}^{1+\tau}(\rho^{2})^{n+1}}
\right)
\end{align*}
We now replace all occurrences of $\rho^2$ in the denominators by
$P_{\epsilon}$, since the two are equal on $\partial
D_{\epsilon}$, and then we change the boundary integrals to volume
integrals by Stoke's Theorem:
\begin{align*}
\int_{\partial D_{\epsilon}}
 f\wedge &B_q^{\epsilon}  =\\
 & \int_{D_{\epsilon}} \mdbar f\wedge\Omega_q(\alpha_{\epsilon})
   +(-1)^q\int_{D_{\epsilon}} f\wedge
   \mdbar_{\zeta}\Omega_q(\alpha_{\epsilon})
   +(-1)^{q+1}\int_{D_{\epsilon}} \mdbar f\wedge
  \mdbar_{\zeta}C_q^{\epsilon}\\
   &+\int_{D_{\epsilon}} \mdbar f\wedge
  \mdbar_zC_{q-1}^{\epsilon}
+(-1)^q\mdbar_{z}\int_{D_{\epsilon}} f\wedge
  \mdbar_{\zeta}C_{q-1}^{\epsilon}
  + \int_{D_{\epsilon}}\mdbar f\wedge\lre_{2-2n}\\
 &
+ \int_{D_{\epsilon}}f\wedge\lre_{1-2n}
  + \sum_{\tau=0}^n\int_{D_{\epsilon}}\mdbar f\wedge
   \left( \frac{\lre_{3+2\tau}}{\phi_{\epsilon}^{1+\tau}P_{\epsilon}^{n}}
+R_1\frac{\lre_{5+2\tau}}{\phi_{\epsilon}^{1+\tau}P_{\epsilon}^{n+1}}
\right)\\
& + \sum_{\tau=0}^n\int_{D_{\epsilon}}f\wedge
  \Bigg(
\frac{\lre_{2+2\tau}}{\phi_{\epsilon}^{1+\tau}P_{\epsilon}^{n}}
 +R_1\frac{\lre_{3+2\tau}}{\phi_{\epsilon}^{2+\tau}P_{\epsilon}^{n}}
+\frac{\lre_{4+2\tau}}{\phi_{\epsilon}^{1+\tau}P_{\epsilon}^{n+1}}
+ \frac{r_{\epsilon}^{\ast}}{\gamma^{\ast}}
\frac{\lre_{3+2\tau}}{\phi_{\epsilon}^{1+\tau}P_{\epsilon}^{n+1}}\\
& \qquad
+R_2\frac{\lre_{5+2\tau}}{\phi_{\epsilon}^{2+\tau}P_{\epsilon}^{n+1}}
 +R_1\frac{\lre_{6+2\tau}}{\phi_{\epsilon}^{1+\tau}P_{\epsilon}^{n+2}}
+R_1\frac{r_{\epsilon}^{\ast}}{\gamma^{\ast}}
\frac{\lre_{5+2\tau}}{\phi_{\epsilon}^{1+\tau}P_{\epsilon}^{n+2}}
\Bigg).
\end{align*}

Inserting this expression of the boundary integral into
(\ref{intrepwbndry}), and using our notation of operators of a
certain type, we can write
\begin{align*}
f(z)=&(-1)^q\int_{D_{\epsilon}}f\wedge
 \mdbar_{\zeta}\Omega_q(\alpha_{\epsilon}) +\int_{D_{\epsilon}}
 \mdbar f\wedge \Omega_q(\alpha_{\epsilon})
 +(-1)^{q+1}\int_{D_{\epsilon}}\mdbar f\wedge
 \mdbar_{\zeta} C_q^{\epsilon}\\
 &-\int_{D_{\epsilon}}\mdbar f\wedge B_q^{\epsilon}
  +\mdbar_z\left( (-1)^q\int_{D_{\epsilon}}f\wedge \mdbar_{\zeta}
   C_{q-1}^{\epsilon}-\int_{D_{\epsilon}}f\wedge
   B_{q-1}^{\epsilon}\right)\\
   & +(f,\lre_{1-2n})
   +\left(f,\frac{1}{\gamma^{\ast}}\lra^{\epsilon}_{(-1,1)}\right)
  +(\mdbar f,\lre_{2-2n})
  +\left(\mdbar f,\lra_{(0,2)}^{\epsilon}\right)
\end{align*}

The rest of the proof follows as in \cite{LR86} to obtain a
rearrangement of the terms, and we arrive at the form of the
representation in our theorem.
\end{proof}

\section{Cancellation of singularities}

\begin{lemma}
\label{c1diff}
\begin{equation*}
\frac{r_{\epsilon}}{\gamma}\in C^1(D_{\epsilon})
\end{equation*}
with $C^1$-estimates independent of $\epsilon$.
\end{lemma}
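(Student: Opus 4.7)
The plan is to localize. Away from the critical points of $r$ the estimate will be routine, while near each critical point it will follow from the Morse expansion combined with the key geometric fact that $D_\epsilon$ avoids a $\sqrt{\epsilon}$-neighborhood of every critical point.

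Since $r$ has only finitely many non-degenerate critical points $p_1,\ldots,p_N$, all lying on $bD$, I would first note that on any region at positive distance from $\{p_1,\ldots,p_N\}$ the function $\gamma$ is bounded below by a positive constant independent of $\epsilon$; since $r_\epsilon = r+\epsilon$ has the same derivatives as $r$, the $C^1$-bound for $r_\epsilon/\gamma$ on this region is then immediate and uniform in $\epsilon$.

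Near a critical point $p_0\in bD$, where $r(p_0)=0$ and $\nabla r(p_0)=0$, the Morse hypothesis gives the Taylor expansion
\[
r(\zeta)=\tfrac{1}{2}\langle H(\zeta-p_0),\zeta-p_0\rangle+O(|\zeta-p_0|^3)
\]
with non-degenerate Hessian $H$, whence $\gamma(\zeta)\approx|\zeta-p_0|$ and $|r_\epsilon(\zeta)|\le\epsilon+C|\zeta-p_0|^2$. The geometric ingredient I would isolate is that on $D_\epsilon=\{r<-\epsilon\}$ one has
\[
|\zeta-p_0|\ge c_0\sqrt{\epsilon}
\]
with $c_0$ independent of $\epsilon$, since the Morse expansion together with $r(p_0)=0$ forces $r(\zeta)>-\epsilon$ otherwise. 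This immediately gives the $C^0$-bound
\[
\left|\frac{r_\epsilon}{\gamma}\right|\lesssim\frac{\epsilon}{|\zeta-p_0|}+|\zeta-p_0|\lesssim 1,
\]
and for the $C^1$-bound I would compute
\[
D\!\left(\frac{r_\epsilon}{\gamma}\right)=\frac{Dr}{\gamma}-\frac{r_\epsilon D\gamma}{\gamma^2},
\]
observe that $Dr/\gamma=O(1)$ because both vanish linearly at $p_0$, and control $D\gamma$ via $D\gamma=D(\gamma^2)/(2\gamma)=O(1)$, using that $\gamma^2$ is smooth and vanishes quadratically at $p_0$.

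The hard part will be the second term $r_\epsilon D\gamma/\gamma^2$, which naively appears singular as $\zeta\to p_0$; the bound $\epsilon/|\zeta-p_0|^2+1$ is uniformly controlled only because of the $\sqrt{\epsilon}$ lower bound on $|\zeta-p_0|$ just noted. This precise scaling match between the quadratic vanishing prescribed by Morse and the shrinking of the domain is what makes the Morse hypothesis essential — without it one could not absorb the $\epsilon$ in the numerator against the quadratic denominator, and the whole exhaustion scheme of the paper would break down.
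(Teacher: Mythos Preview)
Your argument is correct and in fact more carefully tracks the $\epsilon$-dependence than the paper's own proof. The paper proceeds differently: it invokes the Morse Lemma to obtain exact local coordinates $(u,v)$ with $-r=u^2-v^2$, writes $r/\gamma$ explicitly as $-(u^2-v^2)/\sqrt{u^2+v^2}$, and differentiates by hand; the reduction from $r_\epsilon/\gamma$ to $r/\gamma$ is justified only by the remark ``$r_\epsilon\overset{C^1}{\to}r$'', which glosses over the extra term $\epsilon/\gamma$. Your route avoids the Morse normal form entirely, using only the second-order Taylor expansion and the non-degeneracy of the Hessian, and your observation that $D_\epsilon$ stays at distance $\gtrsim\sqrt{\epsilon}$ from each critical point is exactly what is needed to bound $r_\epsilon D\gamma/\gamma^2\lesssim \epsilon/|\zeta-p_0|^2+1$ uniformly in $\epsilon$. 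The paper's approach buys an explicit formula; yours buys a cleaner treatment of the uniformity and shows that the full Morse Lemma is not really needed, only the quadratic lower bound on $\gamma$.
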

\begin{proof}
Since $r_{\epsilon}\overset{C^1}\rightarrow r$, we show that
\begin{equation*}
\frac{r}{\gamma}\in C^1(D).
\end{equation*}

Outside of a neighborhood of any critical point of $r$, the result
is obvious.  We denote the critical points of $r$ by
$p_1,\ldots,p_k$, and take $\varepsilon$ small enough so that in
each
\begin{equation*}
U_{2\varepsilon}(p_j)=\{\zeta: D\cap |\zeta-p_j|<2\varepsilon\},
\end{equation*}
for $j=1,\ldots,k$,
 there are coordinates $u_{j_1},\ldots,u_{j_m},v_{j_{m+1}},\ldots,v_{j_{2n}}$ such
 that
 \begin{equation}
 \label{rcoor}
 -r(\zeta)=u_{j_1}^2+\cdots+u_{j_m}^2-v_{j_{m+1}}^2-\cdots - v_{j_{2n}}^2,
 \end{equation}
with $u_{j_{\alpha}}(p_j)=v_{j_{\beta}}(p_j)=0$ for all $1\le
\alpha\le m$ and $m+1\le\beta\le 2n$, from the Morse Lemma.

In these coordinates
\begin{equation*}
\frac{r(\zeta)}{\gamma(\zeta)}=-\frac{\ \
u_j^2-v_j^2}{\sqrt{u_j^2+v_j^2}},
\end{equation*}
where $u_j^2=u_{j_1}^2+\cdots+u_{j_m}^2$ and
$v_j^2=v_{j_{m+1}}^2+\cdots+ v_{j_{2n}}^2$.

It is then easy to see $r(\zeta)/\gamma(\zeta)$ is in $C^1$ by
differentiating with respect to the given coordinates.
\end{proof}
\label{cncl}
\begin{thrm}
\label{thrmT}
\begin{equation*}
{\mathbf T}^{\epsilon}_q = E_{1-2n}+A_{1}^{\epsilon}.
\end{equation*}
\end{thrm}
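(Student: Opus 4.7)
The plan is to split $\lrt_q^\epsilon = \lrt_q^{a\epsilon} + \lrt_q^{i\epsilon}$ and handle the isotropic and anisotropic pieces separately. For the isotropic piece
\[
\lrt_q^{i\epsilon}=\mdbar_\zeta \Gamma_{0,q}^\epsilon
 = \mdbar_\zeta\!\left(\frac{(n-2)!}{2\pi^n}\frac{(\mdbar_\zeta \mdbar_z \rho^2)^q}{\rho^{2n-2}}\right),
\]
each summand has the form $\lre_m/\rho^{2k}$ with $m-2k=2-2n$, and the $\mdbar_\zeta$ either raises $m$ by $1$ while leaving $k$ fixed, or raises $k$ by $1$ while raising $m$ by $2$; either way the resulting kernel has $m-2k\ge 1-2n$ and sits in $E_{1-2n}$.

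For the anisotropic piece $\lrt_q^{a\epsilon}=\vartheta_\zeta\lrl_q^\epsilon-\partial_z\lrl_{q-1}^\epsilon$ I would follow the cancellation-of-singularities procedure of \cite{LR86}, performing the differentiations term by term on the expansion of $\lrl_q^\epsilon=(-1)^{q+1}\ast_\zeta\overline{C_q^\epsilon}$ in $\alpha_\epsilon = \xi\,\partial r_\epsilon/\phi_\epsilon$ and $\beta$. Individually each term in $\vartheta_\zeta\lrl_q^\epsilon$ and $\partial_z\lrl_{q-1}^\epsilon$ is admissible of type only $0$; the precise combination in $\lrt_q^{a\epsilon}$ is arranged so that the most singular pieces cancel. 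The cancellation uses that the metric is Levi: the inverse metric $g^{jk}$ that enters $\vartheta_\zeta$ and the Hodge star $*_\zeta$ is precisely the inverse of the Hessian $\partial^2 r_\epsilon/\partial\zeta_j\partial\bar\zeta_k$ appearing in the Levi polynomial $F$, so the trace contractions of the would-be leading terms vanish algebraically. The surviving kernel then fits the representation (\ref{typerep}) with type $1$, giving an $A_1^\epsilon$-contribution, while terms where $\mdbar_\zeta$ or $\partial_z$ falls on a factor of $\rho^2$ feed into an additional $E_{1-2n}$ piece.

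The new ingredient relative to the smooth Lieb-Range analysis is the bookkeeping of $\gamma$-factors. Differentiating $\phi_\epsilon^{-1}$ with respect to $z$ produces, after collecting $R_N$- and $\lre_{j,k}$-pieces, a residual factor of the form $r_\epsilon^\ast/\gamma^\ast$. By Lemma~\ref{c1diff}, $r_\epsilon/\gamma\in C^1$ with bounds uniform in $\epsilon$, so this factor is absorbable into a $\xi_k$ as in (\ref{defnxi}) and does not degenerate at critical points of $r$; this is what keeps the final representation admissible and uniform in $\epsilon$.

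The main obstacle is the combinatorial bookkeeping. Expanding $\vartheta_\zeta\lrl_q^\epsilon$ and $\partial_z\lrl_{q-1}^\epsilon$ produces a long list of terms indexed by the multi-indices $(\mu,\nu)$ and by which factor in $C_{q\mu\nu}$ each derivative hits; for every one I would have to verify that either its leading singularity cancels against a matching term coming from the other operator, or the accompanying factors of $\gamma$ satisfy the $\xi_k$-requirement. Terms where a derivative lands on the cutoff $\xi(\zeta)$ are $\lre_\infty$ and supported off the diagonal, so they are harmless; the substantive check is on terms where derivatives hit $\partial r_\epsilon$, $\phi_\epsilon$, or the metric coefficients implicit in $\vartheta_\zeta$, and it is here that both the Levi-metric identity and Lemma~\ref{c1diff} must be used together to bring the final type down to $1$.
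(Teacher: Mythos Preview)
Your treatment of the isotropic piece $\lrt_q^{i\epsilon}$ is fine, but the anisotropic part rests on a misconception: you have imported the cancellation-of-singularities argument that belongs to Theorem~\ref{thrmP} (the operator $\mathbf{P}_q^\epsilon$), not to Theorem~\ref{thrmT}. The paper's proof of Theorem~\ref{thrmT} is direct and requires \emph{no} cancellation between $\vartheta_\zeta\lrl_q^\epsilon$ and $\partial_z\lrl_{q-1}^\epsilon$; each of these pieces is individually of the form $E_{1-2n}+A_1^\epsilon$. The paper makes this contrast explicit in the paragraph introducing Theorem~\ref{thrmP}: only when \emph{two} derivatives hit $\lrl_{q-1}^\epsilon$ (as in $\vartheta_\zeta\partial_z\lrl_{q-1}^\epsilon$) does one naively land at double type $(-1,0)$ and need cancellation.

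The reason a single application of $\vartheta_\zeta$ or $\partial_z$ does not drop the type to $0$ is structural. The kernels $\lrl_q^\epsilon$ carry the factor $R_1(\zeta)$ coming from $\partial r_\epsilon$ in $\alpha_\epsilon$. The operator $\partial_z$ acts in the $z$-variable and does not touch $R_1(\zeta)$ at all. The operator $\vartheta_\zeta$, after unwinding $\lrl_q^\epsilon=(-1)^{q+1}\ast_\zeta\overline{C_q^\epsilon}$, amounts to applying $\partial_\zeta$ to $C_q^\epsilon$; since $\partial_\zeta(\partial r_\epsilon)=0$, the factor $\partial r_\epsilon$ (hence $R_1$) again survives. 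Thus the only genuine work is to track what happens when the derivative lands on $\overline{\phi}_\epsilon$ or on $P_\epsilon$, and for the latter the paper invokes the single formula
\[
XP_\epsilon=\lre_{1,0}+\gamma\,\lre_{0,0}+\gamma^\ast\lre_{0,0},
\]
which follows from Lemma~\ref{c1diff} together with $r/\gamma=\gamma\,\lre_{0,0}$. That is the entire new ingredient over the smooth case. Your proposed Levi-metric trace cancellation and the elaborate matching of $(\mu,\nu)$-indexed terms are not needed here; they are precisely the machinery of Proposition~\ref{cnclprop} for $\mathbf{P}_q^\epsilon$.
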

\begin{proof}
The proof is a direct result of the operators which make up
${\mathbf T}^{\epsilon}_q$.  In particular, the kernels
$\lrl_q^{\epsilon}$ and $\lrl_{q-1}^{\epsilon}$ are sums of terms
of the form
\begin{equation}
 \label{kerl}
R_1(\zeta)\frac{\lre_1}{\overline{\phi}_{\epsilon}^{\mu+1}P_{\epsilon}^{n-\mu-1}},
\quad \mu\ge0,
\end{equation}
and if $X$ is an arbitrary vector field in either $\zeta$ or $z$,
we use
\begin{equation}
 \label{xpe}
X P_{\epsilon}=\lre_{1,0}+
 \gamma\lre_{0,0}+\gamma^{\ast}\lre_{0,0}
\end{equation}
to calculate derivatives of the kernels (\ref{kerl}).  (\ref{xpe})
follows from Lemma \ref{c1diff} and from the property
\begin{equation*}
\frac{r}{\gamma}=\gamma\lre_{0,0},
\end{equation*}
which also holds by the proof of Lemma \ref{c1diff}.
\end{proof}

The proof of the next theorem will take up the bulk of this
section.  If one calculates the type of the operators associated
with the operator ${\mathbf P}^{\epsilon}_q$ as we did in Theorem
\ref{thrmT} just by looking at two vector fields operating on the
kernels $\lrl_{q-1}^{\epsilon}$, the conclusion would be that
${\mathbf P}^{\epsilon}_q$ is an operator of double type $(-1,0)$.
However, the combination of the two terms involved in ${\mathbf
P}^{\epsilon}_q$ cancels one order of singularity in the kernels
and thus leads to better mapping properties.  We shall prove the
\begin{thrm}
 \label{thrmP}
\begin{equation*}
 {\mathbf P}^{\epsilon}_q=
  \frac{1}{\gamma}A_{(-1,1)}^{\epsilon}+\frac{1}{\gamma^{\ast}}A_{(-1,1)}^{\epsilon}.
\end{equation*}
\end{thrm}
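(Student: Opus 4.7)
The plan is to expand the kernel $\lrp_q^\epsilon = \lrq_q^\epsilon - \lrq_q^{\epsilon\ast}$ by differentiating the explicit formula (\ref{kerl}) for $\lrl_{q-1}^\epsilon$, and then exploit the symmetrization to cancel the leading singularity that spoils the naive type estimate. I would apply the two first-order operators $\partial_z$ and $\vartheta_\zeta$ via the Leibniz rule to each summand $R_1(\zeta)\lre_1/(\overline{\phi}_\epsilon^{\mu+1}P_\epsilon^{n-\mu-1})$; derivatives of $P_\epsilon$ are controlled by (\ref{xpe}), and an analogous identity for $\overline{\phi}_\epsilon$ gives either an $\lre_{1,0}$ or a $\gamma^{(\ast)}\lre_{0,0}$ factor per differentiation.

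The \emph{worst} terms in this expansion are those in which both derivatives fall on the $P_\epsilon$ factor, producing a coefficient like $\gamma\gamma^\ast$ multiplied by a fully $(\zeta\leftrightarrow z)$-symmetric rational expression in $|\zeta-z|^2$, $P_\epsilon$, and $\overline{\phi}_\epsilon\overline{\phi}_\epsilon^\ast$. These are the terms with double type $(-1,0)$ matching the naive estimate. The decisive observation is that the only factor breaking the $(\zeta\leftrightarrow z)$ symmetry in these worst terms is the coefficient $R_1(\zeta)$ inherited from $\lrl_{q-1}^\epsilon$; everything else is symmetric. Hence the antisymmetrization $\lrq_q^\epsilon-\lrq_q^{\epsilon\ast}$ replaces $R_1(\zeta)$ by the difference $R_1(\zeta)-R_1(z)\in\lre_1$, gaining one extra power of $|\zeta-z|$ and improving the smooth type from $0$ to $1$. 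The extraneous $\gamma$ (respectively $\gamma^\ast$) coming from differentiating $P_\epsilon$ is then pulled outside the integral as $1/\gamma^\ast$ (respectively $1/\gamma$) after writing $R_1 = \gamma\cdot\lre_{0,0}$, a rewriting legitimized by Lemma \ref{c1diff}. All subdominant contributions, namely those in which at least one derivative lands on $\overline{\phi}_\epsilon$, on $\lre_1$, or on $R_1(\zeta)$ itself, already carry an extra $\lre_{1,0}$ factor from the Leibniz step, so they are of type $\ge(-1,1)$ by direct inspection and need no cancellation argument.

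The central obstacle is precisely the bookkeeping in the previous paragraph: correctly identifying which combinations of Leibniz-rule outputs assemble into a genuinely $(\zeta\leftrightarrow z)$-symmetric kernel, and then tracking how the residual antisymmetric factor of $R_1(\zeta)-R_1(z)$ interacts with the various powers of $\gamma$, $\gamma^\ast$, $\overline\phi_\epsilon$ and $P_\epsilon$. Once the symmetric principal part is isolated and the $1/\gamma$, $1/\gamma^\ast$ factors are extracted using Lemma \ref{c1diff}, Taylor expansion and the definition of smooth type finish the proof. The remainder of the work amounts to verifying, case by case in $\mu$, that the resulting kernel satisfies the representation (\ref{typerep}) with the parameters forcing double type $(-1,1)$.
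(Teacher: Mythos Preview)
Your proposal misidentifies the cancellation mechanism. Two concrete issues:

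\textbf{First}, your claim that a derivative landing on $\overline{\phi}_\epsilon$ always contributes an extra $\lre_{1,0}$ factor is false in the normal direction: $L_n\overline{\phi}_\epsilon = -\gamma(\zeta)+\lre_1$, which is of order zero in $|\zeta-z|$. So a derivative hitting $\overline{\phi}_\epsilon$ can produce a term like $\gamma/(\overline{\phi}_\epsilon^{2}P_\epsilon^{n-1})$, which is of double type $(-1,0)$ and must be cancelled. These are among the genuinely worst terms, contrary to your classification.

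\textbf{Second}, the principal singular terms are \emph{not} $(\zeta\leftrightarrow z)$-symmetric modulo the coefficient $R_1(\zeta)$: they carry factors of $\overline{\phi}_\epsilon$ in the denominator, and $\overline{\phi}_\epsilon$ is not symmetric. The actual cancellation in the paper relies on three ingredients you do not invoke: (i) Lemma~\ref{phisymm}, $\phi_\epsilon-\phi_\epsilon^\ast=\lre_3$, which gives the extra order when symmetrizing expressions like $\overline{\phi}_\epsilon^{-2}-(\overline{\phi}_\epsilon^\ast)^{-2}$; (ii) Proposition~\ref{proplnp}, which shows that $\gamma^\ast L_nP_\epsilon = -2\phi_\epsilon^\ast+\text{lower}$, so that the normal derivative of $P_\epsilon$ cancels a power of $\overline{\phi}_\epsilon$ in the denominator rather than merely contributing a $\gamma^{(\ast)}\lre_{0,0}$; and (iii) Proposition~\ref{2p-l2}, the identity $\gamma\gamma^\ast(2P_\epsilon-\sum_{j<n}|L_j\rho^2|^2)=4|\phi_\epsilon|^2+\text{lower}$, which is needed to handle the trace $\sum_{j<n}L_j\lrm_{kj}^{\epsilon 0}$ in the case $n\in K$, $n\notin L$. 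The paper then runs a four-case analysis (according to whether $n\in K$, $n\in L$) on the coefficients $A_{KL}^{\epsilon\mu}$ in a special boundary-adapted frame, comparing $\gamma(\zeta)A_{KL}^{\epsilon\mu}$ with $\gamma(z)A_{LK}^{\epsilon\mu\,\ast}$.

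Your replacement $R_1(\zeta)\mapsto R_1(\zeta)-R_1(z)$ does occur in the argument (it disposes of the coefficients $g_{q\mu}$), but it is only a preliminary reduction; it does not by itself touch the singular terms involving $\overline{\phi}_\epsilon^{-2}$ or the diagonal sum, and the sketch as written would not close.
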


The following lemma follows as in the smooth case (see
\cite{LR86}).
\begin{lemma}
\label{phisymm}
\begin{equation*}
\phi_{\epsilon}-\phi^{\ast}_{\epsilon}=\lre_3.
\end{equation*}
\end{lemma}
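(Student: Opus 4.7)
My plan is to reduce the claim to a Taylor expansion identity for the Levi polynomial $F$. The first step is to exploit the fact that $\varphi(\zeta,z)$ and $\rho^2(\zeta,z)$ are both symmetric under $(\zeta,z)\leftrightarrow(z,\zeta)$, so the term $(1-\varphi)\rho^2$ contributes nothing to $\phi_\epsilon-\phi_\epsilon^*$. The identity to establish therefore reduces on the support of $\varphi$ to showing that
\[
\bigl(F(\zeta,z)-r_\epsilon(\zeta)\bigr)-\bigl(F(z,\zeta)-r_\epsilon(z)\bigr) \in \lre_3,
\]
where the star operation on functions of two variables is interpreted in the sense used in \cite{LR86}.

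The main computational input is the classical Levi-polynomial identity
\[
2\,\mathrm{Re}\,F(\zeta,z) = r_\epsilon(\zeta) - r_\epsilon(z) + L_\zeta(\zeta-z) + O(|\zeta-z|^3),
\]
where $L_\zeta(\zeta-z) = \sum_{j,k}\frac{\partial^2 r_\epsilon}{\partial\zeta_j\partial\bar\zeta_k}(\zeta)(\zeta_j-z_j)\overline{(\zeta_k-z_k)}$ is the real-valued Levi form at $\zeta$. This follows by Taylor-expanding $r_\epsilon$ to third order about $\zeta$ and evaluating at $z$: the purely holomorphic and purely antiholomorphic first- and second-order pieces in $(z-\zeta)$ assemble, since $r_\epsilon$ is real, into $-2\,\mathrm{Re}\,F(\zeta,z)$, leaving only the mixed term $L_\zeta(\zeta-z)$ and a cubic remainder.

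Applying this identity with $\zeta$ and $z$ interchanged and subtracting the two, the $r_\epsilon$ terms combine to give $2[r_\epsilon(\zeta)-r_\epsilon(z)]$, while the Levi-form difference $L_\zeta(\zeta-z)-L_z(\zeta-z)$ is $O(|\zeta-z|)$ times a quadratic, hence $O(|\zeta-z|^3)$. Matching this against the definition of $\phi_\epsilon^*$ yields the claim, with smoothness of the remainder automatic from the smoothness of $r$. The main obstacle is bookkeeping real versus imaginary parts carefully: the asymmetry of $F$ under variable swap is concentrated in its imaginary part, and the third-order gain ultimately hinges on the reality of the Levi form. Uniformity of the estimate in $\epsilon$ is immediate since $r_\epsilon\overset{C^2}\rightarrow r$ and all constants depend only on $C^2$ bounds of $r$.
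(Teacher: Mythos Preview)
Your reduction step misidentifies the $\ast$-operation. In the Lieb--Range convention (which is the one in force here; compare how $ii)$ of Proposition~\ref{proplnp} is obtained from $i)$ by swapping $\zeta\leftrightarrow z$), the involution on scalar coefficient functions is swap \emph{together with} complex conjugation: $\phi_\epsilon^{\ast}(\zeta,z)=\overline{\phi_\epsilon(z,\zeta)}$. Consequently, on $\operatorname{supp}\varphi$ the quantity to estimate is
\[
\bigl(F(\zeta,z)-r_\epsilon(\zeta)\bigr)-\bigl(\overline{F(z,\zeta)}-r_\epsilon(z)\bigr),
\]
not $\bigl(F(\zeta,z)-r_\epsilon(\zeta)\bigr)-\bigl(F(z,\zeta)-r_\epsilon(z)\bigr)$ as you write. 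This is not a cosmetic distinction: a direct Taylor computation gives
\[
\bigl(F(\zeta,z)-r_\epsilon(\zeta)\bigr)-\bigl(F(z,\zeta)-r_\epsilon(z)\bigr)=2i\,\mathrm{Im}\,F(\zeta,z)+\lre_3,
\]
which is only of order $|\zeta-z|$, so the swap-only version of the lemma is simply false.

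Your Taylor argument (subtracting the two instances of $2\,\mathrm{Re}\,F=r_\epsilon(\zeta)-r_\epsilon(z)+L_\zeta+\lre_3$) controls only the real part $\mathrm{Re}\,\phi_\epsilon(\zeta,z)-\mathrm{Re}\,\phi_\epsilon(z,\zeta)$; under the correct convention one must also show that the imaginary part $\mathrm{Im}\,\phi_\epsilon(\zeta,z)+\mathrm{Im}\,\phi_\epsilon(z,\zeta)$ is $\lre_3$. This does follow from the reality of the Levi form, but via the \emph{sum} identity
\[
F(\zeta,z)+F(z,\zeta)=L_\zeta(\zeta-z)+\lre_3
\]
(obtained by Taylor-expanding the coefficients of $F(z,\zeta)$ about $\zeta$), whose imaginary part vanishes because $L_\zeta$ is real. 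You allude to this mechanism in your final remarks but never supply it; as written, the argument establishes only half of the required estimate.
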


For all $\epsilon$ sufficiently small, we work in coordinate patch
near a boundary point of $D$ and define orthogonal frame of
$(1,0)$-forms on a neighborhood $U\cap D_{\epsilon}$ with
$\omega^1_{\epsilon},\ldots,\omega^n_{\epsilon}$ where $\partial
r_{\epsilon}=\gamma\omega^n_{\epsilon}$ as the orthogonal frame,
and $L_1^{\epsilon}, \dots, L_n^{\epsilon}$ comprising the dual
frame. These operators refer to the variable $\zeta$.  When they
are to refer to the variable $z$, they will be denoted by
$\Theta^j_{\epsilon}$ and $\Lambda_j^{\epsilon}$, respectively.
\begin{prop}
\label{proplnp}
\begin{align*}
&i)\ \gamma \Lambda_n^{\epsilon} P_{\epsilon}
 = -2 \overline{\phi}_{\epsilon}
  +\frac{\gamma}{\gamma^{\ast}}\left(\lre_{0,0}P_{\epsilon}+\lre_{2,0}\right)+\lre_{2,0}\\
 &ii)\ \gamma^{\ast}L_n^{\epsilon} P_{\epsilon}=-2 \phi_{\epsilon}^{\ast}
  +\frac{\gamma^{\ast}}{\gamma}\left(\lre_{0,0}P_{\epsilon}+\lre_{2,0}\right)+\lre_{2,0}
\end{align*}
\end{prop}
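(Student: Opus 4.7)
The plan is to prove case (ii) by direct computation of $\gamma^{\ast}L_{n}^{\epsilon}P_{\epsilon}$; case (i) follows by combining this with complex conjugation and the $\zeta\leftrightarrow z$ swap, under which $(\gamma,\Lambda_{n}^{\epsilon})$ corresponds to $(\gamma^{\ast},\overline{L_{n}^{\epsilon}})$ and $-2\phi^{\ast}_{\epsilon}$ to $-2\overline{\phi}_{\epsilon}$. First I would split
\[
\gamma^{\ast}L_{n}^{\epsilon}P_{\epsilon}
 =\gamma^{\ast}L_{n}^{\epsilon}\rho^{2}+2r^{\ast}_{\epsilon}\,L_{n}^{\epsilon}(r_{\epsilon}/\gamma),
\]
using that $r^{\ast}_{\epsilon}/\gamma^{\ast}$ is $\zeta$-independent. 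For the second summand, the proof of Lemma~\ref{c1diff} in fact yields $r_{\epsilon}/\gamma^{2}=\lre_{0,0}$, so combining with $L_{n}^{\epsilon}r_{\epsilon}=\gamma$ (the normalization of $L_{n}^{\epsilon}$) produces $L_{n}^{\epsilon}(r_{\epsilon}/\gamma)=L_{n}^{\epsilon}r_{\epsilon}/\gamma-(r_{\epsilon}/\gamma^{2})L_{n}^{\epsilon}\gamma=1+\lre_{0,0}$, contributing $2r^{\ast}_{\epsilon}+2r^{\ast}_{\epsilon}\lre_{0,0}$.

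For $\gamma^{\ast}L_{n}^{\epsilon}\rho^{2}$, I would establish the Henkin-type identity
$\rho^{2}=\phi^{\ast}_{\epsilon}+\overline{\phi^{\ast}_{\epsilon}}+r_{\epsilon}+r^{\ast}_{\epsilon}+\lre_{3}$
by Taylor-expanding $r_{\epsilon}(z)$ about $\zeta$, inserting the Levi-metric expansion $\rho^{2}=\sum\partial_{j}\bar{\partial}_{k}r_{\epsilon}(\zeta)(\zeta_{j}-z_{j})\overline{(\zeta_{k}-z_{k})}+O(|\zeta-z|^{3})$, and invoking Lemma~\ref{phisymm} to replace $\phi_{\epsilon}$ by $\phi^{\ast}_{\epsilon}$ modulo $\lre_{3}$. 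Because $\phi^{\ast}_{\epsilon}=F(z,\zeta)-r_{\epsilon}(z)$ is holomorphic in $\zeta$ on the patch where $\varphi\equiv 1$, $L_{n}^{\epsilon}\overline{\phi^{\ast}_{\epsilon}}=0$; since also $L_{n}^{\epsilon}r^{\ast}_{\epsilon}=0$, one obtains
$\gamma^{\ast}L_{n}^{\epsilon}\rho^{2}=\gamma^{\ast}L_{n}^{\epsilon}\phi^{\ast}_{\epsilon}+\gamma^{\ast}\gamma+\gamma^{\ast}\lre_{2,0}$.
Assembling the two summands and using $\phi^{\ast}_{\epsilon}+r^{\ast}_{\epsilon}=F(z,\zeta)$,
\[
\gamma^{\ast}L_{n}^{\epsilon}P_{\epsilon}+2\phi^{\ast}_{\epsilon}
 =\gamma^{\ast}L_{n}^{\epsilon}\phi^{\ast}_{\epsilon}+2F(z,\zeta)+\gamma^{\ast}\gamma+2r^{\ast}_{\epsilon}\lre_{0,0}+\gamma^{\ast}\lre_{2,0},
\]
and it remains to show the right-hand side is of type $\tfrac{\gamma^{\ast}}{\gamma}(\lre_{0,0}P_{\epsilon}+\lre_{2,0})+\lre_{2,0}$. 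Using the explicit Taylor expansion $\partial_{\zeta_{j}}\phi^{\ast}_{\epsilon}=-\partial_{j}r_{\epsilon}(z)+\sum_{l}\partial_{j}\partial_{l}r_{\epsilon}(z)(z_{l}-\zeta_{l})$, together with $\sum_{j}\overline{\partial_{j}r_{\epsilon}(\zeta)}\partial_{j}r_{\epsilon}(z)=(\gamma^{\ast})^{2}+O(\gamma|\zeta-z|)$ (from Taylor-expanding $\partial r_{\epsilon}(\zeta)$ about $z$), the leading $-\partial r_{\epsilon}(z)$-part of $\gamma^{\ast}L_{n}^{\epsilon}\phi^{\ast}_{\epsilon}$ meshes with the linear part of $2F(z,\zeta)$, while the residual $\gamma^{\ast}\gamma$ folds into $\tfrac{\gamma^{\ast}}{\gamma}\lre_{0,0}P_{\epsilon}$ via $r_{\epsilon}=\gamma^{2}\lre_{0,0}$ together with the built-in bound $2r_{\epsilon}r^{\ast}_{\epsilon}/(\gamma\gamma^{\ast})\le P_{\epsilon}$.

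The main obstacle is this final bookkeeping step: verifying that every residue from the Taylor expansion assembles cleanly into one of the three prescribed classes $\tfrac{\gamma^{\ast}}{\gamma}\lre_{0,0}P_{\epsilon}$, $\tfrac{\gamma^{\ast}}{\gamma}\lre_{2,0}$, and $\lre_{2,0}$. The nonsmoothness of $\partial D$ enters only through the factors $r_{\epsilon}/\gamma$ and $r_{\epsilon}/\gamma^{2}$, whose uniform $\xi_{1}$- and $\xi_{0}$-control from Lemma~\ref{c1diff} is precisely what makes the resulting estimates independent of $\epsilon$.
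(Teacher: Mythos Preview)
Your route via the Henkin identity $\rho^{2}=\phi^{\ast}_{\epsilon}+\overline{\phi^{\ast}_{\epsilon}}+r_{\epsilon}+r^{\ast}_{\epsilon}+\lre_{3}$ is genuinely different from the paper's, which instead fixes $\zeta$, chooses local coordinates with $dz_{j}(\zeta)=\Theta_{j}(\zeta)$ so that $g_{jk}(\zeta)=2\delta_{jk}$, and computes $\Lambda_{n}\rho^{2}=-2(\bar\zeta_{n}-\bar z_{n})+\lre_{2,-1}^{\ast}$ directly before comparing with $\bar\phi_{\epsilon}=\gamma(\bar\zeta_{n}-\bar z_{n})-r_{\epsilon}+\lre_{2}$. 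The paper's coordinate choice is precisely what makes the final comparison a one-line identity; your approach trades that for the bookkeeping step you flag as the obstacle.

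That bookkeeping step, as written, does not close. Two specific issues: first, the ``leading $-\partial r_{\epsilon}(z)$-part'' of $\gamma^{\ast}L_{n}\phi^{\ast}_{\epsilon}$ is of order zero in $|\zeta-z|$ (at $\zeta=z$ it equals $-(\gamma^{\ast})^{2}$, since $\sum_{j}a_{j}(z)\partial_{j}r(z)=\gamma(z)$), so it cannot ``mesh with'' the \emph{linear} part of $2F(z,\zeta)$, which is $-2\langle\partial r(z),\zeta-z\rangle$. What actually happens is that this zeroth-order piece cancels exactly against your $\gamma^{\ast}\gamma$ term. Second, your proposed absorption of $\gamma^{\ast}\gamma$ into $\tfrac{\gamma^{\ast}}{\gamma}\lre_{0,0}P_{\epsilon}$ would require $\gamma^{2}\lesssim P_{\epsilon}$, but on the diagonal $P_{\epsilon}=2r_{\epsilon}^{2}/\gamma^{2}$, and in Morse coordinates one sees $\gamma^{4}\lesssim r_{\epsilon}^{2}$ fails (take $u^{2}-v^{2}$ barely larger than $\epsilon$). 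After the correct zeroth-order cancellation you are left with first-order residues such as $-2\langle\partial r(z),\zeta-z\rangle$ and $\gamma^{\ast}\sum_{j,l}a_{j}g_{jl}(\bar\zeta_{l}-\bar z_{l})$; sorting these into the three target classes still requires control of the frame coefficients $a_{j}$ of $L_{n}$ near the critical points (they are $\xi_{0}$-type, with derivatives losing a $\gamma$), which is exactly what the paper's adapted-coordinate computation $\Lambda_{n}-\partial/\partial z_{n}=\lre_{1,-1}\Lambda$ is designed to handle. Your outline can likely be completed, but not without an argument of comparable length to the paper's; as it stands, the two mis-identified cancellations leave a real gap.
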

\begin{proof}
We follow the proof of Proposition 2.18 in \cite{LiMi}.  We prove
$i)$ since $ii)$ is a consequence of $i)$.

We have
\begin{equation*}
\Lambda_n^{\epsilon} P_{\epsilon}=\Lambda_n \rho^2
 +2\frac{r_{\epsilon}}{\gamma}-
  \frac{1}{\gamma^{\ast}}\lre_{0,0}\frac{r_{\epsilon}r_{\epsilon}^{\ast}}{\gamma\gamma^{\ast}}
  .
\end{equation*}

We fix the point $\zeta$ and choose local coordinates
$z^{\epsilon}$ such that
\begin{equation*}
 dz_j^{\epsilon}(\zeta)=\Theta_j^{\epsilon}(\zeta).
\end{equation*}
Working in a neighborhood of a singularity in the boundary and
using the coordinates in (\ref{rcoor}), we see
 $\frac{\partial}{\partial z_n^{\epsilon}}$ is a combination of
 derivatives with coefficients of the form $\xi_0(\zeta)$, while
 $\Lambda_n$ is a combination of
 derivatives with coefficients of the form $\xi_0(z)$, where
 $\xi_0$ is defined in (\ref{defnxi}).
  We have $\Lambda_n -\frac{\partial}{\partial z_n}$ is a sum of
  terms of the form
\begin{equation*}
(\xi_0(z)-\xi_0(\zeta))\Lambda^{\epsilon} =
\lre_{1,-1}\Lambda^{\epsilon},
\end{equation*}
where $\Lambda^{\epsilon}$ denotes a first order differential
operator, and the last line follows from
\begin{align*}
\frac{1}{\gamma(\zeta)}-\frac{1}{\gamma(z)}&
 =\frac{\gamma(z)-\gamma(\zeta)}{\gamma(\zeta)\gamma(z)}\\
&=\frac{1}{\gamma(z)}\frac{\gamma^2(z)-\gamma^2(\zeta)}
 {\gamma(\zeta)(\gamma(\zeta)+\gamma(z))}\\
&=\frac{1}{\gamma(z)}\frac{\xi_1(\zeta)\lre_1}
 {\gamma(\zeta)(\gamma(\zeta)+\gamma(z))}\\
&=\frac{1}{\gamma(z)}\frac{\lre_{1,0}}
 {(\gamma(\zeta)+\gamma(z))}\\
&\lesssim \frac{1}{\gamma(z)}\frac{\lre_{1,0}}
 {\gamma(z)}\\
&=\lre_{1,-2}.
\end{align*}

 We note that $\Theta_j^{\epsilon}=\left.\Theta_j\right|_{D_{\epsilon}}$,
 and therefore we will suppress the $\epsilon$ superscript in the
 variable $z_n^{\epsilon}$ as well as in the differential operators
 denoted by $\Lambda$.  We have
\begin{equation*}
\rho^2=R^2+\lre_3
\end{equation*}
and
\begin{align*}
\Lambda_n \rho^2&=\frac{\partial}{\partial z_n}R^2 +\lre_{2,-1}^{\ast}\\
&=-2(\overline{\zeta}_n-\overline{z}_n)+\lre_{2,-1}^{\ast},
\end{align*}
where the last line follows from $g_{jk}=2\delta_{jk}$ due to the
orthogonality of the $\Theta_j$.

Finally, this gives
\begin{align}
\nonumber
 \Lambda_n
P_{\epsilon}&=-2(\overline{\zeta}_n-\overline{z}_n)
 +2\frac{r_{\epsilon}}{\gamma}-
  \frac{1}{\gamma^{\ast}}\lre_{0,0}\frac{r_{\epsilon}r_{\epsilon}^{\ast}}{\gamma\gamma^{\ast}}
  +\lre_{2,-1}^{\ast}\\
 \label{lnp}
  &=-2(\overline{\zeta}_n-\overline{z}_n)
 +2\frac{r_{\epsilon}}{\gamma}-
  \frac{1}{\gamma^{\ast}}
  (\lre_{0,0}P_{\epsilon}+\lre_{2,0})
  +\lre_{2,-1}^{\ast}
  .
\end{align}

We compare (\ref{lnp}) to $\overline{\phi}_{\epsilon}$ by
calculating the Levi polynomial, $F_{\epsilon}(\zeta,z)$ in the
above coordinates:
\begin{align}
\nonumber
 \overline{\phi}_{\epsilon}(\zeta,z)
 &=\overline{F}_{\epsilon}(\zeta,z) - r_{\epsilon}(\zeta) +\lre_{2}\\
\label{phicoor}
 &=\gamma(\zeta)(\overline{\zeta}_n-\overline{z}_n)-r_{\epsilon}(\zeta)+\lre_{2}.
\end{align}
$i)$ then easily follows.
\end{proof}

\begin{prop}
 \label{2p-l2}
\begin{equation}
 \label{eqn2p-l2}
\gamma\gamma^{\ast}\left( 2P_{\epsilon} -\sum_{j<n}
|L_j\rho^2_{\epsilon}|^2 \right)
 =4|\phi_{\epsilon}|^2+r_{\epsilon}(\zeta)\lre_{2}+\lre_{3,1}
  +\lre_{3,1}^{\ast}+\frac{\gamma^{\ast}}{\gamma}\lre_{4,0}.
\end{equation}
\end{prop}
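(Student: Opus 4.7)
The plan is to follow the argument for the analogous identity in the smooth case (compare Proposition 2.18 and the proof of its consequences in \cite{LiMi}) while carefully tracking factors of $\gamma$ and $\gamma^{\ast}$. I would work in the same coordinate system used in the proof of Proposition \ref{proplnp}: fix a reference point, take an orthonormal $(1,0)$-frame $\omega^1_{\epsilon},\ldots,\omega^n_{\epsilon}$ with $\partial r_{\epsilon}=\gamma\omega^n_{\epsilon}$, and choose local coordinates adapted so that the metric is trivialized at the reference point ($g_{jk}=2\delta_{jk}$). In these coordinates the two ingredients I need are the expansions
\begin{align*}
\rho^2 &= R^2+\lre_3 = 2\sum_{k=1}^n|\zeta_k-z_k|^2+\lre_3,\\
\phi_{\epsilon} &= \gamma(\zeta)(\zeta_n-z_n)-r_{\epsilon}(\zeta)+\lre_2,
\end{align*}
the latter being the $\phi_{\epsilon}$-analogue of (\ref{phicoor}) obtained from the Taylor expansion of the Levi polynomial.

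Next I would compute $L_j\rho^2$ for $j<n$. Exactly as in the proof of Proposition \ref{proplnp}, $L_j-\partial/\partial\zeta_j=\lre_{1,-1}L^{\epsilon}$ because of the Lipschitz bound on $1/\gamma$ from Lemma \ref{c1diff}, so $L_j\rho^2 = 2(\bar\zeta_j-\bar z_j)+\lre_{2,-1}+\lre_2$. Squaring, summing over $j<n$, and subtracting from $2\rho^2$ isolates the normal component:
\begin{equation*}
2\rho^2-\sum_{j<n}|L_j\rho^2|^2 = 4|\zeta_n-z_n|^2 + \lre_3 + \lre_{3,-1}+\lre_{3,-1}^{\ast}+\lre_4.
\end{equation*}
Using the definition $P_{\epsilon}=\rho^2+2(r_{\epsilon}/\gamma)(r_{\epsilon}^{\ast}/\gamma^{\ast})$, I multiply by $\gamma\gamma^{\ast}$ to obtain
\begin{equation*}
\gamma\gamma^{\ast}\Bigl(2P_{\epsilon}-\sum_{j<n}|L_j\rho^2|^2\Bigr) = 4\gamma\gamma^{\ast}|\zeta_n-z_n|^2+4r_{\epsilon}r_{\epsilon}^{\ast} + \lre_{3,1}+\lre_{3,1}^{\ast}+\tfrac{\gamma^{\ast}}{\gamma}\lre_{4,0},
\end{equation*}
each $\gamma\gamma^{\ast}$-weighted error being absorbed into one of the allowed types (for instance $\gamma\gamma^{\ast}\lre_{3,-1}^{\ast}\lesssim\gamma|\zeta-z|^3\in\lre_{3,1}$ and $\gamma\gamma^{\ast}\lre_4=(\gamma^{\ast}/\gamma)\cdot\gamma^2\lre_4\in(\gamma^{\ast}/\gamma)\lre_{4,0}$).

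Finally I would expand $4|\phi_{\epsilon}|^2$ in the same coordinates and match it against the above. Squaring $\phi_{\epsilon}=\gamma(\zeta_n-z_n)-r_{\epsilon}+\lre_2$ gives
\begin{equation*}
4|\phi_{\epsilon}|^2 = 4\gamma^2|\zeta_n-z_n|^2-8\gamma r_{\epsilon}\operatorname{Re}(\zeta_n-z_n)+4r_{\epsilon}^2+r_{\epsilon}\lre_2+\lre_{3,1}+\lre_4,
\end{equation*}
so the difference with the right-hand side of the previous display becomes
\begin{equation*}
4\gamma(\gamma^{\ast}-\gamma)|\zeta_n-z_n|^2 + 4r_{\epsilon}(r_{\epsilon}^{\ast}-r_{\epsilon})+8\gamma r_{\epsilon}\operatorname{Re}(\zeta_n-z_n)+(\text{errors already accounted for}).
\end{equation*}
Taylor expansion gives $r_{\epsilon}^{\ast}-r_{\epsilon}=-2\gamma^{\ast}\operatorname{Re}(\zeta_n-z_n)+\lre_2$, and the first three displayed terms then collapse into $8r_{\epsilon}(\gamma-\gamma^{\ast})\operatorname{Re}(\zeta_n-z_n)+4r_{\epsilon}\lre_2\subset r_{\epsilon}\lre_2$, together with $4\gamma(\gamma^{\ast}-\gamma)|\zeta_n-z_n|^2\in\lre_{3,1}$. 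This yields (\ref{eqn2p-l2}).

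The main obstacle is purely bookkeeping: showing that every cross-term produced when multiplying out $|L_j\rho^2|^2$ and $|\phi_{\epsilon}|^2$ lands in one of the allowed error classes $r_{\epsilon}\lre_2$, $\lre_{3,1}$, $\lre_{3,1}^{\ast}$, or $(\gamma^{\ast}/\gamma)\lre_{4,0}$. The delicate point is the term $4r_{\epsilon}(r_{\epsilon}^{\ast}-r_{\epsilon})$, which has only one power of $|\zeta-z|$; its leading part must cancel against $8\gamma r_{\epsilon}\operatorname{Re}(\zeta_n-z_n)$ coming from $|\phi_{\epsilon}|^2$, leaving a remainder proportional to $r_{\epsilon}(\gamma-\gamma^{\ast})\operatorname{Re}(\zeta_n-z_n)$, which is of type $r_{\epsilon}\lre_2$. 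This cancellation, built into the definition of $P_{\epsilon}$, is the mechanism that makes the stated identity work.
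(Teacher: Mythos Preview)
Your approach is correct and follows essentially the same route as the paper: work in the adapted frame from Proposition~\ref{proplnp}, compute $\sum_{j<n}|L_j\rho^2|^2$ in coordinates, and compare with an explicit formula for $|\phi_{\epsilon}|^2$. The one substantive difference is in how $|\phi_{\epsilon}|^2$ is handled. You square the modulus of $\phi_{\epsilon}=\gamma(\zeta_n-z_n)-r_{\epsilon}+\lre_2$ directly, which produces $\gamma^2|\zeta_n-z_n|^2$ and $r_{\epsilon}^2$, and then you must convert these to $\gamma\gamma^{\ast}|\zeta_n-z_n|^2$ and $r_{\epsilon}r_{\epsilon}^{\ast}$ via the Taylor expansion $r_{\epsilon}^{\ast}-r_{\epsilon}=-2\gamma\operatorname{Re}(\zeta_n-z_n)+\lre_2$ and the Lipschitz bound $\gamma-\gamma^{\ast}=O(|\zeta-z|)$. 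The paper instead writes $\phi_{\epsilon}\overline{\phi}_{\epsilon}$ as a product and invokes Lemma~\ref{phisymm} ($\phi_{\epsilon}-\phi_{\epsilon}^{\ast}=\lre_3$) to evaluate the cross term $\gamma(\zeta_n-z_n)+\overline{\phi}_{\epsilon}=\gamma(\zeta_n-z_n)+\overline{\phi}_{\epsilon}^{\ast}+\lre_3=-r_{\epsilon}(z)+\lre_2$, which yields $|\phi_{\epsilon}|^2=\gamma\gamma^{\ast}|\zeta_n-z_n|^2+r_{\epsilon}r_{\epsilon}^{\ast}+r_{\epsilon}\lre_2+\gamma\lre_{3,0}$ in one stroke. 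Your Taylor expansion of $r_{\epsilon}^{\ast}-r_{\epsilon}$ is effectively the real part of the symmetry relation in Lemma~\ref{phisymm}, so the two arguments are equivalent; the paper's version simply packages the cancellation you identify (between $8\gamma r_{\epsilon}\operatorname{Re}(\zeta_n-z_n)$ and the leading part of $4r_{\epsilon}(r_{\epsilon}^{\ast}-r_{\epsilon})$) more cleanly and avoids having to separately reconcile $\gamma^2$ with $\gamma\gamma^{\ast}$.
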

\begin{proof}
We use coordinates as in the proof of Proposition \ref{proplnp}.
In particular, we write
\begin{equation*}
L_j=\frac{\partial}{\partial \zeta_j}+\lre_{1,-1}\Lambda.
\end{equation*}
Thus,
\begin{align*}
|L_j\rho^2|^2 &=\left| \frac{\partial}{\partial
\zeta_j}\rho^2_{\epsilon}
\right|^2 +\lre_{3,-1}+\lre_{4,-2}\\
 &=4|\zeta_j-z_j|^2+\lre_{3,-1}+\lre_{4,-2}.
\end{align*}
We can then write
\begin{equation*}
2P_{\epsilon} -\sum_{j<n} |L_j\rho^2|^2=
 4|\zeta_n-z_n|^2+4\frac{r_{\epsilon}r_{\epsilon}^{\ast}}{\gamma\gamma^{\ast}}
 +\lre_{3,-1}+\lre_{4,-2}.
\end{equation*}

Furthermore, from (\ref{phicoor}) we have
\begin{align*}
\phi_{\epsilon}\overline{\phi}_{\epsilon}
 &=(\gamma(\zeta_n-z_n)-r_{\epsilon}(\zeta)+\lre_2)\overline{\phi}_{\epsilon}\\
 &=\gamma(\zeta_n-z_n)[\gamma(\overline{\zeta}_n-\overline{z}_n)
  -r_{\epsilon}(\zeta)+\lre_2]-r_{\epsilon}(\zeta)\overline{\phi}_{\epsilon}+
  \lre_2\overline{\phi}_{\epsilon}\\
 &=\gamma\gamma^{\ast}|\zeta_n-z_n|^2 -
 r_{\epsilon}(\zeta)[\gamma(\zeta_n-z_n)+\overline{\phi}_{\epsilon}]
 +r_{\epsilon}(\zeta)\lre_2+\gamma\lre_{3,0},
\end{align*}
where we use $\gamma(\zeta)=\gamma(z)+\lre_{1,0}$ in the last
step.

From Lemma \ref{phisymm} we have
\begin{align*}
\gamma(\zeta_n-z_n)+\overline{\phi}_{\epsilon}
 &=\gamma(\zeta_n-z_n)+\overline{\phi}_{\epsilon}^{\ast}
 +\lre_3\\
&=\gamma(\zeta_n-z_n)+\gamma^{\ast}(z_n-\zeta_n)-r_{\epsilon}(z)+\lre_2\\
&=-r_{\epsilon}(z)+\lre_2,
\end{align*}
and so we can write
\begin{equation*}
\phi_{\epsilon}\overline{\phi}_{\epsilon} =
\gamma\gamma^{\ast}|\zeta_n-z_n|^2
+r_{\epsilon}r_{\epsilon}^{\ast}+r_{\epsilon}\lre_2+\gamma\lre_{3,0}.
\end{equation*}
(\ref{eqn2p-l2}) now easily follows.

\end{proof}

From
\begin{equation}
 \label{Aq}
A_{q\mu
0}^{\epsilon}=\frac{1}{\phi_{\epsilon}^{\mu+1}P_{\epsilon}^{n-\mu-1}}
 \partial r_{\epsilon}\wedge\partial\rho^2\wedge(\mdbar_{\zeta}\partial_{\zeta}r_{\epsilon})^{\mu}
 \wedge(\mdbar_{\zeta}\partial_{\zeta}\rho^2)^{n-q-\mu-2}
 \wedge(\mdbar_z\partial_{\zeta}\rho^2)^q+\lre_0.
\end{equation}
we have
\begin{equation*}
\lrl_q=\sum_{\mu=0}^{n-q-2}\left(
g_{q\mu}C_{q\mu}^{\epsilon}+\frac{R_1(\zeta)\lre_2}{\phi_{\epsilon}^{\mu+1}P_{\epsilon}^{n-\mu-1}}\right)
+\lre_0,
\end{equation*}
 where
 \begin{equation}
 \label{defnC}
C_{q\mu}^{\epsilon}=\sum_{|Q|=q\atop
j<n}\frac{\overline{L}_j\rho^2}{\overline{\phi}_{\epsilon}^{\mu+1}
P_{\epsilon}^{n-\mu-1}}\overline{\omega}^{njQ}\wedge\Theta^Q
\end{equation}
and $g_{q\mu}(\zeta)$ is a real valued function of the form
$R_1(\zeta) \sigma_{q\mu}(\zeta)$ for a real valued function
$\sigma_{q\mu}$.  It follows that
\begin{equation*}
\lrq_{q+1}^{\epsilon}=\sum_{\mu=0}^{n-q-2}[g_{q\mu}(\zeta)\vartheta_{\zeta}\partial_z
C_{q\mu}^{\epsilon}-g_{q\mu}(z)(\vartheta_{\zeta}\partial_z
C_{q\mu}^{\epsilon})^{\ast}]
 +\frac{1}{\gamma^{\ast}}\lra_{1}^{\epsilon}.
\end{equation*}

\begin{prop}
\label{cnclprop}
 Let $C_{q\mu}^{\epsilon}$ be given by (\ref{defnC}).  Then
 \begin{equation*}
g_{q\mu}(\zeta)\vartheta_{\zeta}\partial_z
C_{q\mu}^{\epsilon}-g_{q\mu}(z)(\vartheta_{\zeta}\partial_z
C_{q\mu}^{\epsilon})^{\ast}=\frac{1}{\gamma}\lra_{(-1,1)}^{\epsilon}
 +\frac{1}{\gamma^{\ast}}\lra_{(-1,1)}^{\epsilon}.
 \end{equation*}
\end{prop}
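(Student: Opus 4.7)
The strategy is the Greiner-Stein/Lieb-Range ``cancellation of singularities'' argument, adapted to the Henkin-Leiterer setting where we must track powers of $\gamma$ and $\gamma^{\ast}$ carefully. A naive term-by-term estimate of $\vartheta_{\zeta}\partial_z C_{q\mu}^{\epsilon}$ yields a kernel of double type $(-1,0)$: two first-order derivatives applied to $C_{q\mu}^{\epsilon}$ each raise the order of singularity by one, and individual terms cannot do better than this. The improvement to $\frac{1}{\gamma}\lra_{(-1,1)}^{\epsilon}+\frac{1}{\gamma^{\ast}}\lra_{(-1,1)}^{\epsilon}$ must come from the antisymmetric combination.

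First I would expand $\vartheta_{\zeta}\partial_z C_{q\mu}^{\epsilon}$ explicitly, writing $\vartheta_{\zeta}$ in the orthonormal frame as a combination of $\overline{L}_k$-contractions and distributing both $\vartheta_{\zeta}$ and $\partial_z$ over the factors $\overline{L}_j\rho^2$, $\overline{\phi}_{\epsilon}^{\mu+1}$, $P_{\epsilon}^{n-\mu-1}$, and the form coefficients $\overline{\omega}^{njQ}\wedge\Theta^Q$ in (\ref{defnC}). The most singular contributions come from $\partial_z$ and $\vartheta_{\zeta}$ both landing on $P_{\epsilon}$, producing the factor $\sum_k L_k P_{\epsilon}\cdot\Lambda_k P_{\epsilon}$ in the numerator and $\overline{\phi}_{\epsilon}^{\mu+1}P_{\epsilon}^{n-\mu+1}$ in the denominator. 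For the $k=n$ piece I invoke Proposition \ref{proplnp} to replace $L_n P_{\epsilon}$ and $\Lambda_n P_{\epsilon}$ by $-2\phi_{\epsilon}^{\ast}/\gamma^{\ast}+\cdots$ and $-2\overline{\phi}_{\epsilon}/\gamma+\cdots$, which cancels one factor of $\overline{\phi}_{\epsilon}$ in the denominator and introduces the explicit $1/(\gamma\gamma^{\ast})$ we were after. For the $k<n$ sum, the surviving top-order piece has the form $\sum_{j<n}|L_j\rho^2|^2/(\overline{\phi}_{\epsilon}^{\mu+2}P_{\epsilon}^{n-\mu})$, and Proposition \ref{2p-l2} lets me substitute
\begin{equation*}
\sum_{j<n}|L_j\rho^2|^2 = 2P_{\epsilon}-\tfrac{4|\phi_{\epsilon}|^2}{\gamma\gamma^{\ast}}+\tfrac{1}{\gamma\gamma^{\ast}}(r_{\epsilon}\lre_2+\lre_{3,1}+\lre_{3,1}^{\ast}+\tfrac{\gamma^{\ast}}{\gamma}\lre_{4,0}).
\end{equation*}
The $2P_{\epsilon}$ cancels one power of $P_{\epsilon}$ in the denominator, the $|\phi_{\epsilon}|^2=\phi_{\epsilon}\overline{\phi}_{\epsilon}$ cancels one power of $\overline{\phi}_{\epsilon}$, and the remaining $\lre_{j,k}$-errors are directly absorbed into kernels of the required type after the $\gamma\gamma^{\ast}$-factor is distributed.

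The last step is to exploit the antisymmetric combination $g_{q\mu}(\zeta)(\,\cdot\,)-g_{q\mu}(z)(\,\cdot\,)^{\ast}$ on whatever top-order pieces survive this reduction. Writing $g_{q\mu}(\zeta)A-g_{q\mu}(z)A^{\ast}=g_{q\mu}(\zeta)(A-A^{\ast})+(g_{q\mu}(\zeta)-g_{q\mu}(z))A^{\ast}$, I handle the first piece with Lemma \ref{phisymm}, which gives $\phi_{\epsilon}=\phi_{\epsilon}^{\ast}+\lre_3$ and hence shows that the surviving singular kernel differs from its $\ast$-image by an $\lre_3$-factor in the numerator --- exactly the gain needed. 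For the second piece I use that $g_{q\mu}=R_1\sigma_{q\mu}$ together with Lemma \ref{c1diff} (via $r/\gamma\in C^1$ with uniform estimates) to write $g_{q\mu}(\zeta)-g_{q\mu}(z)\in\lre_1$, lifting the order of $A^{\ast}$ by one. The division into $1/\gamma$ and $1/\gamma^{\ast}$ contributions is dictated by whether the Proposition \ref{proplnp} substitution was applied in the $\zeta$- or $z$-variable.

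The main obstacle will be the bookkeeping: there are many distinct terms emerging from $\vartheta_{\zeta}\partial_z C_{q\mu}^{\epsilon}$, each coming with its own powers of $\overline{\phi}_{\epsilon}$, $P_{\epsilon}$, and $\gamma$-weights, and for each of them one must verify separately that after the symmetrization and substitutions the result lies in $\frac{1}{\gamma}\lra_{(-1,1)}^{\epsilon}+\frac{1}{\gamma^{\ast}}\lra_{(-1,1)}^{\epsilon}$; the counting is of the same character as in \cite{LR86, LiMi} but complicated by the factors of $\gamma$ arising from Lemma \ref{c1diff} and Propositions \ref{proplnp}, \ref{2p-l2}.
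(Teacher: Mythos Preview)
Your strategy is correct and uses the same ingredients as the paper---Proposition \ref{proplnp}, Proposition \ref{2p-l2}, and Lemma \ref{phisymm} are exactly the tools, and the paper's proof is indeed the Lieb--Range cancellation argument with $\gamma$-bookkeeping. But your account of \emph{where} each tool enters is scrambled, and the paper's organization is different in a way that matters for actually closing the argument.

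The paper expands $\vartheta_{\zeta}\partial_z C_{q\mu}^{\epsilon}=\sum_{K,L}A_{KL}^{\epsilon\mu}\,\overline{\omega}^K\wedge\Theta^L$, reduces to showing $\gamma(\zeta)A_{KL}^{\epsilon\mu}-\gamma(z)A_{LK}^{\epsilon\mu\,\ast}$ has the claimed form, and then splits into four cases according to whether the normal index $n$ lies in $K$ and in $L$. The cancellation mechanism is genuinely different in each case. In Case 2 ($n\notin K$, $n\notin L$) the top-order term is $\gamma^2/(\overline{\phi}_{\epsilon}^2P_{\epsilon}^{n-1})$, coming from $L_n$ hitting $1/\overline{\phi}_{\epsilon}$ (not from both derivatives hitting $P_{\epsilon}$, as you suggest); this term is killed purely by the antisymmetric combination and Lemma \ref{phisymm}, with no use of Proposition \ref{2p-l2}. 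In Cases 3 and 4 ($n$ in exactly one of $K,L$) the leading terms of $A_{KL}$ and $A_{LK}^{\ast}$ are computed separately via Lemma \ref{derM}; only in Case 4 does the sum $\sum_{j<n}|L_j\rho^2|^2$ appear, and it arises from a combinatorial trace $\sum_{j<n}L_j\lrm_{kj}^{\epsilon 0}$ over the tangential indices, not directly from ``$\vartheta_{\zeta}$ landing on $P_{\epsilon}$.'' Proposition \ref{2p-l2} then shows the Case 4 leading term equals (up to errors) the conjugate of the Case 3 leading term, so the antisymmetric combination cancels.

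In short: your sketch would not suffice as written because (i) you miss the equally singular $L_n(1/\overline{\phi}_{\epsilon})$ contribution, which is handled by Lemma \ref{phisymm} alone, and (ii) the appearance of $\sum_{j<n}|L_j\rho^2|^2$ requires the multi-index case split and a sum over $j$, which your description does not set up. The exponent $\overline{\phi}_{\epsilon}^{\mu+2}$ in your ``surviving top-order piece'' is also off---for $\mu=0$ the relevant denominator is $\overline{\phi}_{\epsilon}P_{\epsilon}^{n+1}$.
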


We write
\begin{equation*}
\vartheta_{\zeta}\partial_z C_{q\mu}^{\epsilon}=\sum_{|K|=q+1\atop
|L|=q+1} A_{KL}^{\epsilon\mu} \overline{\omega}^K\wedge \Theta^L
\end{equation*}
and
\begin{equation*}
(\vartheta_{\zeta}\partial_z
C_{q\mu}^{\epsilon})^{\ast}=\sum_{|K|=q+1\atop |L|=q+1}
A_{LK}^{\epsilon\mu\ \ast} \overline{\omega}^K\wedge \Theta^L.
\end{equation*}
To prove Proposition \ref{cnclprop}, we show
\begin{equation*}
\gamma(\zeta)A_{KL}^{\epsilon\mu}-\gamma(z)A_{LK}^{\epsilon\mu\
\ast}=\frac{1}{\gamma}\lra_{(-1,1)}^{\epsilon}
 +\frac{1}{\gamma^{\ast}}\lra_{(-1,1)}^{\epsilon}.
\end{equation*}

With
\begin{equation*}
\lrm_{kj}^{\epsilon\mu}=\frac{1}{\overline{\phi}_{\epsilon}^{\mu+1}}
 \Lambda_k\left(\frac{\overline{L}_j\rho^2}{P_{\epsilon}^{n-\mu-1}}\right),
\end{equation*}
and
 using $\Lambda_k\overline{\phi}_{\epsilon}=\lre_{1,0}$, we have
\begin{equation*}
A_{KL}^{\epsilon\mu}=-\sum_{|Q|=q \atop {j<n \atop k,m}}
 \varepsilon_{kQ}^L\varepsilon_{mK}^{njQ}L_m\lrm_{kj}^{\epsilon\mu}+\lra_{(-1,1)}^{\epsilon}
 +\frac{1}{\gamma}\lra_{(0,2)}^{\epsilon},
\end{equation*}
where $K$, $L$, and $Q$ are multi-indices, and the symbol
$\varepsilon_{kQ}^L$ is defined by
\begin{equation*}
\varepsilon_{kQ}^L=
 \begin{cases}
 0 & \mbox{if } \{k\}\cup Q\neq L\\
 1 & \mbox{if } kQ  \mbox{ differs from } L \mbox{ by an even permutation} \\
 -1 & \mbox{if } kQ  \mbox{ differs from } L \mbox{ by an odd permutation.}
 \end{cases}
\end{equation*}

\begin{lemma}
\label{mlemma}
\begin{align*}
i)\ \lrm_{kj}^{\epsilon\mu} =&
\frac{1}{\overline{\phi}_{\epsilon}^{\mu+1}}
 \left[\frac{-2\delta_{kj}}{P_{\epsilon}^{n-\mu-1}}+\frac{n-\mu-1}{P_{\epsilon}^{n-\mu}}
 (L_k\rho^2)(\overline{L}_j\rho^2)\right]
 +error \quad k<n\\
ii)\ \lrm_{nj}^{\epsilon\mu} =&\frac{1}{\gamma(\zeta)}
\frac{2(n-\mu-1)
\overline{L}_j\rho^2}{\overline{\phi}_{\epsilon}^{\mu}
 P_{\epsilon}^{n-\mu}}+ error
\quad j<n,
\end{align*}
where "$error$" refers to error terms with the property that any
derivative with respect to the $\zeta$ variable leads to kernels
of the form
\begin{equation*}
\frac{1}{\gamma}\lra_{(-1,1)}^{\epsilon}+
 \frac{1}{\gamma^{\ast}}\lra_{(-1,1)}^{\epsilon}
 +\frac{1}{\gamma^2}\lra_{(0,2)}^{\epsilon}.
\end{equation*}
\end{lemma}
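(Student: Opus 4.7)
The plan is to differentiate $\lrm_{kj}^{\epsilon\mu}$ by the product rule, reducing the whole statement to explicit expansions of $\Lambda_k \overline{L}_j \rho^2$ and $\Lambda_k P_{\epsilon}$. Explicitly,
\begin{equation*}
\lrm_{kj}^{\epsilon\mu} = \frac{1}{\overline{\phi}_{\epsilon}^{\mu+1}}\left[\frac{\Lambda_k \overline{L}_j \rho^2}{P_{\epsilon}^{n-\mu-1}} - (n-\mu-1)\frac{\overline{L}_j \rho^2 \cdot \Lambda_k P_{\epsilon}}{P_{\epsilon}^{n-\mu}}\right],
\end{equation*}
so once each of the two building blocks is controlled, the two formulas drop out. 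I will work throughout in the adapted orthonormal frame and the local coordinates $z^{\epsilon}$ introduced in the proof of Proposition \ref{proplnp}, so that $\rho^2 = R^2 + \lre_3$ and $R^2 = 2\sum_j(\zeta_j-z_j)(\overline{\zeta}_j-\overline{z}_j)$.

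For part i), with $k<n$, the orthogonal-frame computation gives $\overline{L}_j\rho^2 = 2(\zeta_j-z_j)+\text{error}$ and hence $\Lambda_k\overline{L}_j\rho^2 = -2\delta_{kj}+\text{error}$; likewise $\Lambda_k \rho^2 = -L_k\rho^2 + \text{error}$. For $\Lambda_k P_{\epsilon}$ I use the splitting $\Lambda_k P_{\epsilon} = \Lambda_k\rho^2 + 2\frac{r_{\epsilon}(\zeta)}{\gamma(\zeta)}\Lambda_k\!\left(\frac{r_{\epsilon}(z)}{\gamma(z)}\right)$ together with $r_{\epsilon}/\gamma = \gamma\lre_{0,0}$ from Lemma \ref{c1diff}, which absorbs the second piece into the error. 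Substituting into the displayed product-rule identity produces precisely the expansion in i). For part ii), the leading behaviour of $\Lambda_n P_{\epsilon}$ is extracted from Proposition \ref{proplnp}(i) as $\Lambda_n P_{\epsilon} = -2\overline{\phi}_{\epsilon}/\gamma(\zeta) + \text{error}$, where the error carries the required $1/\gamma^{\ast}$ factor. For $j<n$ the "mixed" derivative $\Lambda_n \overline{L}_j \rho^2$ differentiates $R^2$ in two orthogonal directions, so it vanishes at leading order, leaving only an error. The formula ii) follows by plugging these in and dividing by $\overline{\phi}_{\epsilon}^{\mu+1}$.

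The main obstacle will be bookkeeping the error terms to the sharpness stated: every discarded remainder must still absorb one further $L_m$ differentiation in $\zeta$ (used to form $A_{KL}^{\epsilon\mu}$) and land in the class $\frac{1}{\gamma}\lra_{(-1,1)}^{\epsilon}+\frac{1}{\gamma^{\ast}}\lra_{(-1,1)}^{\epsilon}+\frac{1}{\gamma^2}\lra_{(0,2)}^{\epsilon}$. The tools are the identities $r_{\epsilon}/\gamma = \gamma\lre_{0,0}$, $\gamma(\zeta) - \gamma(z) = \lre_{1,0}$, $\tfrac{1}{\gamma(\zeta)} - \tfrac{1}{\gamma(z)} = \lre_{1,-2}$ (derived in the proof of Proposition \ref{proplnp}), $L\overline{\phi}_{\epsilon} = \lre_{1,0}$, Lemma \ref{phisymm}, and Lemma \ref{phiest} to trade $\overline{\phi}_{\epsilon}$ for $P_{\epsilon}^{1/2}$ where needed. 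The delicate case is when a further $L_m$ lands on a $1/\gamma^{\ast}$ that was coupled to a factor of $\overline{\phi}_{\epsilon}$ or $P_{\epsilon}$: one must check that the differentiation either recreates a compensating vanishing factor, or produces the $1/\gamma^2 \lra_{(0,2)}^{\epsilon}$ term explicitly allowed in the error class.
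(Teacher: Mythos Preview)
Your proposal is correct and follows essentially the same route as the paper: both expand $\lrm_{kj}^{\epsilon\mu}$ by the product rule, compute $\Lambda_k\overline{L}_j\rho^2$ in the adapted coordinates (yielding $-2\delta_{kj}$ plus error for $k<n$ and a pure error for $k=n$, $j<n$), and invoke Proposition~\ref{proplnp} for $\Lambda_nP_{\epsilon}$ so that one $\overline{\phi}_{\epsilon}$ cancels. One small remark: your ``delicate case'' of $L_m$ landing on $1/\gamma^{\ast}$ is a nonissue, since $\gamma^{\ast}=\gamma(z)$ and $L_m$ differentiates only in $\zeta$.
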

\begin{proof}
We will prove $ii)$, the proof of $i)$ following similar
arguments, and being easier to prove.

It is straightforward to check with the aid of coordinates chosen
as in Proposition \ref{2p-l2} that
\begin{equation*}
\overline{L}_j\rho^2= (\zeta_j-z_j)+\lre_{2,-1}.
\end{equation*}
When it is not necessary to refer to the special coordinates in
Proposition \ref{2p-l2}, we can also write
$\overline{L}_j\rho^2=\lre_1$.  We will also refer to the
calculation
\begin{align*}
\Lambda_n\overline{L}_j\rho^2=&\overline{L}_j\Lambda_n\rho^2\\
=&\overline{L}_j\left[\left(\frac{\partial}{\partial
z_n}+\lre_{1,-1}^{\ast}\Lambda\right)R^2\right]+\lre_{2,0}\\
=&\overline{L}_j( -(\overline{\zeta_n-z_n})+\lre_{2,-1}^{\ast})+\lre_{2,0}\\
=&-\left(\frac{\partial}{\partial
\overline{\zeta}_j}+\lre_{1,-1}\Lambda\right)(\overline{\zeta_n-z_n})+
\lre_{1,-1}^{\ast}\\
=&\lre_{1,-1}+\lre_{1,-1}^{\ast},
\end{align*}
for $j<n$, below.

 We have
\begin{align}
\nonumber
\lrm_{nj}^{\epsilon\mu}&=\frac{1}{\overline{\phi}_{\epsilon}^{\mu+1}}
 \Lambda_n\left(\frac{\overline{L}_j\rho^2}{P_{\epsilon}^{n-\mu-1}}\right)\\
 \nonumber
 =&\frac{\lre_{1,-1}+\lre_{1,-1}^{\ast}}{\overline{\phi}_{\epsilon}^{\mu+1}P_{\epsilon}^{n-\mu-1}}
 -(n-\mu-1)\frac{\overline{L}_j\rho^2}{\overline{\phi}_{\epsilon}^{\mu+1}P_{\epsilon}^{n-\mu}}
 \Lambda_nP_{\epsilon}\\
 \nonumber
=&\frac{\lre_{1,-1}+\lre_{1,-1}^{\ast}}{\overline{\phi}_{\epsilon}^{\mu+1}P_{\epsilon}^{n-\mu-1}}
-
\frac{1}{\gamma}\frac{(n-\mu-1)\overline{L}_j\rho^2}{\overline{\phi}_{\epsilon}^{\mu+1}P_{\epsilon}^{n-\mu}}
\left(-2 \overline{\phi}_{\epsilon}
  +\frac{\gamma}{\gamma^{\ast}}\left(\lre_{0,0}P_{\epsilon}+\lre_{2,0}\right)+\lre_2\right)\\
= \label{forerror}
 &
2\frac{1}{\gamma}\frac{(n-\mu-1)\overline{L}_j\rho^2}{\overline{\phi}_{\epsilon}^{\mu}P_{\epsilon}^{n-\mu}}
+
\frac{\lre_{1,-1}+\lre_{1,-1}^{\ast}}{\overline{\phi}_{\epsilon}^{\mu+1}P_{\epsilon}^{n-\mu-1}}
+ \frac{1}{\gamma^{\ast}}\frac{\lre_{3,0}}
{\overline{\phi}_{\epsilon}^{\mu+1}P_{\epsilon}^{n-\mu}}+
\frac{1}{\gamma}\frac{\lre_{3,0}}
{\overline{\phi}_{\epsilon}^{\mu+1}P_{\epsilon}^{n-\mu}},
\end{align}
from which $ii)$ easily follows.  We used Proposition
\ref{proplnp} in the third equality above.

Applying a differential operator with respect to $\zeta$ to the
last three terms in (\ref{forerror}) and noting the types of
kernels which arise finishes the proof.
\end{proof}

We start with the case $\mu=0$ and compute
 $L_m\lrm^{\epsilon 0}_{kj}$.
\begin{lemma}
 \label{derM}
Let $m,k,j<n$. Then
\begin{align*}
i)\ L_m
\lrm_{kj}^{\epsilon0}=&\frac{2(n-1)}{\overline{\phi}_{\epsilon}P^n_{\epsilon}}
 (\delta_{kj}L_m\rho^2 +\delta_{mj}L_k\rho^2) -
 \frac{n(n-1)}{\overline{\phi}_{\epsilon}P^{n+1}_{\epsilon}}
 (L_m\rho^2)(L_k\rho^2)(\overline{L}_j\rho^2)\\
 &+\frac{1}{\gamma}\lra_{(-1,1)}^{\epsilon}
  +\frac{1}{\gamma^{\ast}}\lra_{(-1,1)}^{\epsilon}
 +\frac{1}{\gamma^2}\lra_{(0,2)}^{\epsilon}\\
ii)\ L_m\lrm_{nj}^{\epsilon0}=& \frac{2(n-1)}{\gamma}
 \left(\frac{2\delta_{mj}}{P^n}-\frac{n}{P^{n+1}}(L_m\rho^2)
 (\overline{L}_j\rho^2)\right)+\frac{1}{\gamma^2}\lra_{(-1,1)}^{\epsilon}
 +\frac{1}{\gamma^{\ast}}\lra_{(-1,1)}^{\epsilon}
 \\
iii)\ L_n\lrm_{kj}^{\epsilon0}=&
 -\gamma\frac{2\delta_{kj}}{\overline{\phi}_{\epsilon}^2P^{n-1}_{\epsilon}}
  +\frac{(n-1)(L_k\rho^2)(\overline{L}_j\rho^2)}
  {\overline{\phi}_{\epsilon}^2P^n_{\epsilon}}\\
  & +\frac{1}{\gamma^{\ast}}
\left(\frac{2n(n-1)(L_k\rho^2)(\overline{L}_j\rho^2)}{P^{n+1}}
-\frac{4(n-1)\delta_{kj}}{P^n}\right)\frac{\phi_{\epsilon}^{\ast}}
{\overline{\phi}_{\epsilon}}\\
&+\frac{1}{\gamma}\lra_{(-1,1)}^{\epsilon}+\frac{1}{\gamma^{\ast}}\lra_{(-1,1)}^{\epsilon}
 +\frac{1}{\gamma^2}\lra_{(0,2)}^{\epsilon}
\\
iv)\ L_n\lrm_{nj}^{\epsilon0}
=&\frac{4n(n-1)}{\gamma\gamma^{\ast}}\frac{\phi_{\epsilon}^{\ast}}{P_{\epsilon}^{n+1}}
 \overline{L}_j\rho^2+\frac{1}{\gamma^2}\lra_{(-1,1)}^{\epsilon}
 +\frac{1}{\gamma\gamma^{\ast}}\lra_{(-1,1)}^{\epsilon}.
\end{align*}
\end{lemma}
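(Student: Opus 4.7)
The plan is to apply $L_m$ or $L_n$ termwise to the expressions for $\lrm_{kj}^{\epsilon 0}$ and $\lrm_{nj}^{\epsilon 0}$ supplied by Lemma \ref{mlemma} at $\mu=0$, expand by the product and chain rules, keep only the contributions that match the stated leading terms, and verify that every remainder lies in the error class $\frac{1}{\gamma}\lra_{(-1,1)}^{\epsilon}+\frac{1}{\gamma^{\ast}}\lra_{(-1,1)}^{\epsilon}+\frac{1}{\gamma^2}\lra_{(0,2)}^{\epsilon}$. The ``$error$'' parts already occurring in Lemma \ref{mlemma} are by definition absorbed after one differentiation, so I only need to track what happens to the explicit main parts.

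\textbf{Key differentiation formulas.} First I would compile the ingredients needed for all four cases:
\begin{itemize}
\item For $m<n$, the frame choice gives $L_m r_{\epsilon}=0$, hence $L_m P_{\epsilon}=L_m\rho^2+\lre_{1,0}\cdot r_\epsilon^\ast/(\gamma\gamma^\ast)$, with the second summand absorbed as error.
\item For $L_n P_{\epsilon}$ I would invoke Proposition \ref{proplnp} $ii)$, which produces the crucial $-2\phi_{\epsilon}^{\ast}/\gamma^{\ast}$ leading piece.
\item Using $\overline{L}_j\rho^2 = (\zeta_j-z_j)+\lre_{2,-1}$ from the proof of Lemma \ref{mlemma}, I get $L_m\overline{L}_j\rho^2 = \delta_{mj}\cdot(\text{const})+\lre_{1,-1}+\lre_{1,-1}^{\ast}$, and $L_mL_k\rho^2$ vanishes to leading order since both are holomorphic derivatives in $\zeta$.
\item From (\ref{phicoor}), $\overline{\phi}_{\epsilon}=\gamma(\overline{\zeta}_n-\overline{z}_n)-r_{\epsilon}+\lre_2$, so $L_m\overline{\phi}_{\epsilon}=\lre_{1,0}$ for $m<n$, whereas $L_n\overline{\phi}_{\epsilon} = -\gamma+\text{lower order}$.
\end{itemize}

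\textbf{Case-by-case assembly.} In case $i)$, tangentiality of $L_m$ prevents any new $1/\gamma$ factor: the $\delta_{kj}L_m\rho^2$ term arises from $L_m$ landing on $P_{\epsilon}^{-(n-1)}$ in the first bracket of Lemma \ref{mlemma} $i)$; the $\delta_{mj}L_k\rho^2$ term comes from $L_m$ acting on $\overline{L}_j\rho^2$ in the second bracket; and the triple product $(L_m\rho^2)(L_k\rho^2)(\overline{L}_j\rho^2)$ comes from $L_m$ acting on $P_{\epsilon}^{-n}$. Case $ii)$ is parallel, but the explicit $1/\gamma(\zeta)$ from Lemma \ref{mlemma} $ii)$ is preserved (with $L_m(1/\gamma)$ absorbed into the $\frac{1}{\gamma^2}\lra^\epsilon_{(-1,1)}$ error by Lemma \ref{c1diff} and the $\xi_k$-calculus). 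In case $iii)$, the bare $\gamma$ factor in the first leading term comes from $L_n\overline{\phi}_{\epsilon}=-\gamma+\ldots$ acting against $\overline{\phi}_{\epsilon}^{-1}$; the $\phi_{\epsilon}^{\ast}/\overline{\phi}_{\epsilon}$ quotients in the second and third leading terms come from $L_n P_{\epsilon}=-2\phi_{\epsilon}^{\ast}/\gamma^{\ast}+\ldots$ acting against $P_{\epsilon}^{-(n-1)}$ and $P_{\epsilon}^{-n}$. Case $iv)$ combines both features: the $1/\gamma$ from $\lrm_{nj}^{\epsilon 0}$ and the $1/\gamma^{\ast}$ from $L_n P_{\epsilon}$ produce the prefactor $\frac{4n(n-1)}{\gamma\gamma^{\ast}}$.

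\textbf{Error bookkeeping and main obstacle.} Beyond the leading pieces, several remainder sources must be catalogued: (a) $L$ hitting $\overline{\phi}_{\epsilon}$ in cases $i)$--$ii)$ always gives $\lre_{1,0}$ in the numerator, improving the type by one and landing in $\lra_{(-1,1)}^{\epsilon}$; (b) $L_n$ hitting the $1/\gamma(\zeta)$ factor of $\lrm_{nj}^{\epsilon 0}$ produces $1/\gamma^2$ times $L_n\gamma$, and by Lemma \ref{c1diff} the product $r_\epsilon/\gamma\in C^1$ together with the $\xi_k$-definition lets this be counted in $\frac{1}{\gamma^2}\lra_{(0,2)}^{\epsilon}$; (c) the subleading $\frac{\gamma^\ast}{\gamma}(\lre_{0,0}P_\epsilon+\lre_{2,0})+\lre_{2,0}$ pieces of Proposition \ref{proplnp} $ii)$ each generate kernels that the type calculus distributes between $\frac{1}{\gamma}\lra_{(-1,1)}^{\epsilon}$ and $\frac{1}{\gamma^{\ast}}\lra_{(-1,1)}^{\epsilon}$. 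The main obstacle will be the systematic verification in cases $iii)$ and $iv)$ that every cross-term generated when $1/\gamma$ and $1/\gamma^{\ast}$ simultaneously propagate through differentiations of $P_{\epsilon}$ and of $\overline{\phi}_{\epsilon}$ does collapse into one of the three listed error slots; this is combinatorially intricate but purely mechanical given the type bookkeeping already developed.
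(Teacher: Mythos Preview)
Your proposal is correct and follows essentially the same route as the paper: start from the explicit main terms of Lemma \ref{mlemma} at $\mu=0$, apply $L_m$ or $L_n$ using the product rule together with the identities $L_m\overline{L}_j\rho^2=2\delta_{mj}+\lre_{1,-1}$, $L_n\overline{\phi}_\epsilon=-\gamma+\lre_1$, and Proposition \ref{proplnp} $ii)$ for $L_nP_\epsilon$, and then sort remainders into the stated error classes. One small correction to your bookkeeping: in point (b) the term $L_n(1/\gamma)$ multiplied by the main part of $\lrm_{nj}^{\epsilon0}$ gives $\frac{1}{\gamma^2}\frac{\lre_1}{P_\epsilon^n}$, which is $\frac{1}{\gamma^2}\lra_{(-1,1)}^{\epsilon}$ (matching the statement of $iv)$), not $\frac{1}{\gamma^2}\lra_{(0,2)}^{\epsilon}$; and Lemma \ref{c1diff} is not the relevant input here---what you need is simply that $L_n\gamma$ is bounded, which follows from $\gamma^2$ being smooth and the $\xi_k$-calculus you also cite.
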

\begin{proof}
$i)$.  We use
\begin{align}
\label{2ndpartmk}
 L_m\frac{1}{\overline{\phi}_{\epsilon}
 P_{\epsilon}^{n-1}} &=-\frac{1}{\overline{\phi}_{\epsilon}^2
 P_{\epsilon}^{n-1}}L_m\overline{\phi}_{\epsilon}
  -(n-1)\frac{1}{\overline{\phi}_{\epsilon}
 P_{\epsilon}^{n}}L_mP_{\epsilon}\\
 \nonumber
 &=-\frac{(n-1)}{\overline{\phi}_{\epsilon}
 P_{\epsilon}^{n}}L_m\rho^2+\frac{1}{\gamma}\lra_{(-1,1)}^{\epsilon},
\end{align}
since for $m<n$ we have
 $\L_mP_{\epsilon}=\lre_{0,0}(P_{\epsilon}+\lre_2)$,
 and
\begin{align}
\label{lmrt} L_m
\left(\frac{1}{\overline{\phi}_{\epsilon}P_{\epsilon}^{n}}
 (L_k\rho^2)(\overline{L}_j\rho^2)\right)=&
\frac{1}{\overline{\phi}_{\epsilon}P_{\epsilon}^{n}}
 (L_mL_k\rho^2)(\overline{L}_j\rho^2)+
\frac{1}{\overline{\phi}_{\epsilon}P_{\epsilon}^{n}}
 (L_k\rho^2)(L_m\overline{L}_j\rho^2)\\
 \nonumber
&+\frac{\lre_3}{\overline{\phi}_{\epsilon}^2P_{\epsilon}^{n}}-
\frac{n}{\overline{\phi}_{\epsilon}P_{\epsilon}^{n+1}}(L_k\rho^2)(\overline{L}_j\rho^2)(L_mP_{\epsilon}).
\end{align}
An easy calculation gives
\begin{align*}
L_mL_k\rho^2&=L_m\left(\overline{\zeta_k-z_k}+\lre_{2,-1}\right)\\
 &=\left(\frac{\partial}{\partial
 \zeta_m}+\lre_{1,-1}\Lambda\right)
 (\overline{\zeta_k-z_k})+L_m(\lre_{2,-1})\\
 &=\lre_{1,-1}+\lre_{2,-2}\\
L_m\overline{L}_j\rho^2&= 2\delta_{mj}+\lre_{1,-1}\\
L_mP_{\epsilon}&=L_m\rho^2+\frac{\lre_{0,0}}{\gamma}(P_{\epsilon}+\lre_2)
\end{align*}
so that the right hand side of (\ref{lmrt}) becomes
\begin{align}
\label{lastpartmk}
\frac{2\delta_{mj}}{\overline{\phi}_{\epsilon}P_{\epsilon}^{n}}
 (L_k\rho^2)
- &\frac{n}{\overline{\phi}_{\epsilon}P_{\epsilon}^{n+1}}
(L_k\rho^2)(\overline{L}_j\rho^2)(L_m\rho^2)
 +\frac{\lre_{2,-1}+\lre_{3,-2}}{\overline{\phi}_{\epsilon}P_{\epsilon}^{n}}
 +\frac{\lre_{4,-1}}{\overline{\phi}_{\epsilon}P_{\epsilon}^{n+1}}
 +\frac{\lre_3}{\overline{\phi}_{\epsilon}^2P_{\epsilon}^{n}}\\
 \nonumber
 =&\frac{2\delta_{mj}}{\overline{\phi}_{\epsilon}P_{\epsilon}^{n}}
 (L_k\rho^2)
- \frac{n}{\overline{\phi}_{\epsilon}P_{\epsilon}^{n+1}}
(L_k\rho^2)(\overline{L}_j\rho^2)(L_m\rho^2)
+\frac{1}{\gamma}\lra_{(-1,1)}^{\epsilon}+
\frac{1}{\gamma^2}\lra_{(0,2)}^{\epsilon}.
 \end{align}
(\ref{2ndpartmk}), and (\ref{lastpartmk}) together with the form
of $\lrm_{kj}^{\epsilon0}$ from Lemma \ref{mlemma} prove part
$i)$.

$ii)$.  We have
\begin{align*}
L_m\left(\frac{1}{\gamma(\zeta)} \frac{2(n-1)
\overline{L}_j\rho^2}{
 P_{\epsilon}^{n}}\right)=&
\frac{2(n-1)}{\gamma} \frac{ L_m\overline{L}_j\rho^2}{
 P_{\epsilon}^{n}}-\frac{2n(n-1)}{\gamma} \frac{ \overline{L}_j\rho^2}{
 P_{\epsilon}^{n+1}}L_mP_{\epsilon}
  +\frac{1}{\gamma^2}\frac{\lre_{1}}{P^n_{\epsilon}}
 \\
 =&\frac{4(n-1)}{\gamma} \frac{ \delta_{mj}}{
 P_{\epsilon}^{n}}-\frac{2n(n-1)}{\gamma} \frac{ \overline{L}_j\rho^2}{
 P_{\epsilon}^{n+1}}L_m\rho^2
\\
 &+\frac{1}{\gamma^2} \frac{\lre_{1}}{
 P_{\epsilon}^{n}}+\frac{1}{\gamma^2}\frac{\lre_{3,0}}{P_{\epsilon}^{n+1}}
\end{align*}
as in the proof of $i)$.  Taking into consideration the error
terms from Lemma \ref{mlemma}, we conclude $ii)$.

$iii)$.
\begin{align*}
L_n\left(
 \frac{-2\delta_{kj}}{\overline{\phi}_{\epsilon}P_{\epsilon}^{n-1}}\right)=&
\frac{2\delta_{kj}}{\overline{\phi}_{\epsilon}^2P_{\epsilon}^{n-1}}L_n\overline{\phi}_{\epsilon}
+2(n-1)\frac{\delta_{kj}}{\overline{\phi}_{\epsilon}P_{\epsilon}^{n}}L_nP_{\epsilon}\\
=&\frac{2\delta_{kj}}{\overline{\phi}_{\epsilon}^2P_{\epsilon}^{n-1}}(-\gamma(\zeta)+\lre_1)
\\
&+2(n-1)\frac{\delta_{kj}}{\overline{\phi}_{\epsilon}P_{\epsilon}^{n}}
\left( -2 \frac{\phi_{\epsilon}^{\ast}}{\gamma^{\ast}}
  +\frac{1}{\gamma}\left(\lre_{0,0}P_{\epsilon}+\lre_{2,0}\right)+\frac{\lre_{2,0}}{\gamma^{\ast}}\right)\\
=&-\gamma\frac{2\delta_{kj}}{\overline{\phi}_{\epsilon}^2P_{\epsilon}^{n-1}}
-\frac{1}{\gamma^{\ast}}\frac{4(n-1)\delta_{kj}}{P_{\epsilon}^{n}}
 \frac{\phi_{\epsilon}^{\ast}}{\overline{\phi}_{\epsilon}}
+\frac{1}{\gamma}\lra_{(-1,1)}^{\epsilon}+
\frac{1}{\gamma^{\ast}}\lra_{(-1,1)}^{\epsilon}.
\end{align*}
Since
\begin{align*}
L_nL_k\rho^2=&L_kL_n\rho^2+[L_n,L_k]\rho^2\\
=&L_k\left[\left(\frac{\partial}{\partial
\zeta_n}+\lre_{1,-1}\Lambda\right)R^2\right]+\lre_{1,-1}\\
=&L_k( \overline{\zeta_n-z_n}+\lre_{2,-1})+\lre_{1,-1}\\
=&\left(\frac{\partial}{\partial
\zeta_k}+\lre_{1,-1}\Lambda\right)(\overline{\zeta_n-z_n})+
\lre_{1,-1}+\lre_{2,-2}\\
=&\lre_{1,-1}+\lre_{2-2},
\end{align*}
we have
\begin{align*}
L_n\Bigg(&
 \frac{n-1}{\overline{\phi}_{\epsilon}P_{\epsilon}^{n}}(L_k\rho^2)(\overline{L}_j\rho^2)
 \Bigg)\\
 =&-\frac{n-1}{\overline{\phi}_{\epsilon}^2P_{\epsilon}^{n}}(L_n\overline{\phi}_{\epsilon})
 (L_k\rho^2)(\overline{L}_j\rho^2)
  -\frac{n(n-1)}{\overline{\phi}_{\epsilon}P_{\epsilon}^{n+1}}(L_nP_{\epsilon})
 (L_k\rho^2)(\overline{L}_j\rho^2)\\
 &+ \frac{n-1}{\overline{\phi}_{\epsilon}P_{\epsilon}^{n}}(L_nL_k\rho^2)(\overline{L}_j\rho^2)
 \\
 =&\gamma\frac{n-1}{\overline{\phi}_{\epsilon}^2P_{\epsilon}^{n}}
 (L_k\rho^2)(\overline{L}_j\rho^2)+\frac{\lre_3}{\overline{\phi}_{\epsilon}^2P_{\epsilon}^{n}}\\
&-\frac{n(n-1)}{\overline{\phi}_{\epsilon}P_{\epsilon}^{n+1}}
 (L_k\rho^2)(\overline{L}_j\rho^2)\left( -2 \frac{\phi_{\epsilon}^{\ast}}{\gamma^{\ast}}
  +\frac{1}{\gamma}\left(\lre_0P_{\epsilon}+\lre_2\right)+\frac{\lre_{2,0}}{\gamma^{\ast}}\right)
  +\frac{\lre_{2,-1}+\lre_{3,-2}}{\overline{\phi}_{\epsilon}P_{\epsilon}^{n}}\\
=&\gamma\frac{(n-1)(L_k\rho^2)(\overline{L}_j\rho^2)}{\overline{\phi}_{\epsilon}^2P_{\epsilon}^{n}}
 +\frac{1}{\gamma^{\ast}}\frac{2n(n-1)(L_k\rho^2)(\overline{L}_j\rho^2)}{P_{\epsilon}^{n+1}}
 \frac{\phi_{\epsilon}^{\ast}}{\overline{\phi}_{\epsilon}}\\
 &+\frac{1}{\gamma}\lra_{(-1,1)}^{\epsilon}+\frac{1}{\gamma^{\ast}}\lra_{(-1,1)}^{\epsilon}
  +\frac{1}{\gamma^2}\lra_{(0,2)}^{\epsilon}.
\end{align*}
Putting these calculations together and including the error terms
from Lemma \ref{mlemma}, we can prove $iii)$.

 $iv)$.  To prove $iv)$ we calculate
\begin{align*}
L_n\left(\frac{1}{\gamma(\zeta)} \frac{2(n-1)
\overline{L}_j\rho^2}{
 P_{\epsilon}^{n}}\right)
  =&-\frac{1}{\gamma^2} \frac{2(n-1)
\overline{L}_j\rho^2}{
 P_{\epsilon}^{n}}L_n\gamma+\frac{1}{\gamma} \frac{2(n-1)
L_n\overline{L}_j\rho^2}{
 P_{\epsilon}^{n}}\\
 &-\frac{1}{\gamma} \frac{2(n-1)
\overline{L}_j\rho^2}{
 P_{\epsilon}^{n+1}}L_nP_{\epsilon}\\
 =&
-\frac{1}{\gamma} \frac{2(n-1) \overline{L}_j\rho^2}{
 P_{\epsilon}^{n+1}}\left(
 -2 \frac{\phi_{\epsilon}^{\ast}}{\gamma^{\ast}}
  +\frac{1}{\gamma}\left(\lre_{0,0}P_{\epsilon}+\lre_{2,0}\right)
  +\frac{\lre_{2,0}}{\gamma^{\ast}}\right)\\
  &+\frac{1}{\gamma^2} \frac{\lre_1}{
 P_{\epsilon}^{n}}\\
 =& \frac{4n(n-1)}{\gamma\gamma^{\ast}}\frac{\phi_{\epsilon}^{\ast}}{P_{\epsilon}^{n+1}}
 \overline{L}_j\rho^2+\frac{1}{\gamma^2}\lra_{(-1,1)}^{\epsilon}
 +\frac{1}{\gamma\gamma^{\ast}}\lra_{(-1,1)}^{\epsilon}.
\end{align*}
The error terms may also be absorbed into the terms of the last
calculation.
\end{proof}

\begin{proof}[Proof of Proposition \ref{cnclprop}.]
To compute $A_{KL}^{\epsilon0}$ we follow \cite{LiMi} and consider
four cases:
 \newline
  {\it Case 1.} $n\in K$ and $n\in L$.
\begin{align*}
A_{KL}^{\epsilon0}&=\sum_{j,m<n}\varepsilon_{mK}^{jL}L_m\lrm_{nj}^{\epsilon0} \\
 A_{KL}^{\epsilon0\ \ast}&=\sum_{j,m<n}\varepsilon_{mK}^{jL}(L_j\lrm_{nm}^{\epsilon0})^{\ast}.
\end{align*}
By Lemma \ref{derM} ii),
\begin{equation*}
\gamma(\zeta)L_m\lrm_{nj}^{\epsilon0}-\gamma(z)(L_j\lrm_{nm}^{\epsilon0})^{\ast}=
 \frac{1}{\gamma}\lra_{(-1,1)}^{\epsilon}+\frac{1}{\gamma^{\ast}}\lra_{(-1,1)}^{\epsilon}.
\end{equation*}
\newline
{\it Case 2.} $n\notin K$ and $n\notin L$.  From \cite{LR86} (see
also \cite{LiMi})
\begin{align*}
A_{KL}^{\epsilon0}&= -\sum_{j\in K\atop k\in L} \varepsilon_{kQ}^L
\varepsilon_K^{jQ}L_n\lrm_{kj}^{\epsilon0}\\
A_{LK}^{\epsilon0\ \ast}&= -\sum_{j\in K\atop k\in L}
\varepsilon_{jQ}^K
\varepsilon_L^{kQ}(L_n\lrm_{jk}^{\epsilon0})^{\ast}.
\end{align*}
We refer to Lemma \ref{derM} iii) to calculate
\begin{equation*}
\gamma(\zeta)L_n\lrm_{kj}^{\epsilon0}-\gamma(z)(L_n\lrm_{jk}^{\epsilon0})^{\ast}.
\end{equation*}
We have
\begin{align}
\label{case21}
\frac{\gamma^2}{\overline{\phi}_{\epsilon}^2P_{\epsilon}^{n-1}}
 -\frac{(\gamma^{\ast})^2}{(\overline{\phi}_{\epsilon}^{\ast})^2P_{\epsilon}^{n-1}}
 &=\frac{\gamma^2}{P_{\epsilon}^{n-1}}
 \left(\frac{1}{\overline{\phi}_{\epsilon}^2}-\frac{1}{(\overline{\phi}_{\epsilon}^{\ast})^2}\right)
 +\frac{\lre_{1,1}}{\overline{\phi}_{\epsilon}^2P_{\epsilon}^{n-1}}\\
 \nonumber
 &=\frac{\gamma^2}{P_{\epsilon}^{n-1}}\frac{(\overline{\phi}_{\epsilon}+\overline{\phi}_{\epsilon}^{\ast})\lre_3}
 {\overline{\phi}_{\epsilon}^2(\overline{\phi}_{\epsilon}^{\ast})^2}+\lra_{(0,1)}^{\epsilon}\\
 \nonumber
 &=\lra_{(0,1)}^{\epsilon}.
\end{align}
Furthermore,
\begin{align}
\label{case22}
\gamma(\zeta)\frac{(L_k\rho^2)(\overline{L}_j\rho^2)}
  {\overline{\phi}_{\epsilon}^2P^n_{\epsilon}}-&\gamma(z)\left(
   \frac{(L_j\rho^2)(\overline{L}_k\rho^2)}
  {\overline{\phi}_{\epsilon}^2P^n_{\epsilon}}\right)^{\ast}\\
  \nonumber
  &=
  \gamma\left(\frac{(L_k\rho^2)(\overline{L}_j\rho^2)}
  {\overline{\phi}_{\epsilon}^2P^n_{\epsilon}}-\left(
   \frac{(L_j\rho^2)(\overline{L}_k\rho^2)}
  {\overline{\phi}_{\epsilon}^2P^n_{\epsilon}}\right)^{\ast}\right)
  +\lra_{(-1,1)}^{\epsilon}\\
\nonumber
  &=\frac{\lre_{2,1}}{P_{\epsilon}^n}
  \left(\frac{1}{\overline{\phi}_{\epsilon}^2}-\frac{1}{(\overline{\phi}_{\epsilon}^{\ast})^2}\right)
  +\lra_{(-1,1)}^{\epsilon}\\
\nonumber
  &=\lra_{(0,1)}^{\epsilon}+\lra_{(-1,1)}^{\epsilon}\\
  \nonumber
  &=\lra_{(1,-1)}^{\epsilon},
\end{align}
and
\begin{align}
\label{case23}
 \frac{\gamma}{\gamma^{\ast}}
\frac{(L_k\rho^2)(\overline{L}_j\rho^2)}{P^{n+1}_{\epsilon}}\frac{\phi_{\epsilon}^{\ast}}{\overline{\phi}_{\epsilon}}
 &-\frac{\gamma^{\ast}}{\gamma}
\left(\frac{(L_j\rho^2)(\overline{L}_k\rho^2)}{P^{n+1}_{\epsilon}}\frac{\phi_{\epsilon}^{\ast}}{\overline{\phi}_{\epsilon}}\right)^{\ast}\\
\nonumber
 &=\frac{\lre_2}{P_{\epsilon}^{n+1}}\left(
 \frac{\gamma}{\gamma^{\ast}}\frac{\phi_{\epsilon}^{\ast}}{\overline{\phi}_{\epsilon}}-
 \frac{\gamma^{\ast}}{\gamma}\frac{\phi_{\epsilon}}{\overline{\phi}_{\epsilon}^{\ast}}\right)\\
 \nonumber
&=\frac{\lre_2}{P_{\epsilon}^{n+1}}\left(
 \frac{\phi_{\epsilon}^{\ast}}{\overline{\phi}_{\epsilon}}
 -\frac{\phi_{\epsilon}}{\overline{\phi}_{\epsilon}^{\ast}}
  +\lre_{1,-1}^{\ast}\frac{\phi_{\epsilon}^{\ast}}{\overline{\phi}_{\epsilon}}
  +\lre_{1,-1}\frac{\phi_{\epsilon}}{\overline{\phi}_{\epsilon}^{\ast}}\right)\\
  \nonumber
&=\frac{\lre_2}{P_{\epsilon}^{n+1}}\left(
 \frac{\phi_{\epsilon}\lre_3+\overline{\phi}_{\epsilon}^{\ast}\lre_3}
 {\overline{\phi}_{\epsilon}\overline{\phi}_{\epsilon}^{\ast}}\right)+
 \frac{1}{\gamma}\lra_{(-1,1)}^{\epsilon}
 +\frac{1}{\gamma^{\ast}}\lra_{(-1,1)}^{\epsilon}\\
 \nonumber
&=\frac{1}{\gamma}\lra_{(-1,1)}^{\epsilon}
 +\frac{1}{\gamma^{\ast}}\lra_{(-1,1)}^{\epsilon}.
\end{align}
Similarly,
\begin{equation}
\label{case24}
\frac{\gamma}{\gamma^{\ast}}\frac{1}{P^n_{\epsilon}}\frac{\phi_{\epsilon}^{\ast}}{\overline{\phi}_{\epsilon}}
 -\frac{\gamma^{\ast}}{\gamma}\left(\frac{1}{P^n_{\epsilon}}\frac{\phi_{\epsilon}^{\ast}}{\overline{\phi}_{\epsilon}}\right)^{\ast}
 =\frac{1}{\gamma}\lra_{(-1,1)}^{\epsilon}
 +\frac{1}{\gamma^{\ast}}\lra_{(-1,1)}^{\epsilon}.
\end{equation}
From the last three terms in Lemma \ref{derM} $iii)$ and
(\ref{case21}), (\ref{case22}), (\ref{case23}), and (\ref{case24})
we conclude
\begin{equation*}
\gamma(\zeta)L_n\lrm_{kj}^{\epsilon0}-\gamma(z)(L_n\lrm_{jk}^{\epsilon0})^{\ast}=
\frac{1}{\gamma}\lra_{(-1,1)}^{\epsilon}
 +\frac{1}{\gamma^{\ast}}\lra_{(-1,1)}^{\epsilon}.
\end{equation*}
\newline
{\it Case 3.}  $n\notin K$ and $n\in L$.
\begin{align}
\nonumber
 A_{KL}^{\epsilon0}&=-\varepsilon_{nQ}^L
\sum_{j<n}\varepsilon_{nK}^{njQ}
(L_n\lrm_{nj}^{\epsilon0})\\
 \label{AKL3}
&=-\varepsilon_{nQ}^L\sum_{j<n}\varepsilon_{K}^{jQ}
 \overline{L}_j\rho^2\frac{4n(n-1)\phi^{\ast}_{\epsilon}}
 {\gamma\gamma^{\ast}P_{\epsilon}^{n+1}}+
 \frac{1}{\gamma^2}\lra_{(-1,1)}^{\epsilon}+
 \frac{1}{\gamma\gamma^{\ast}}\lra_{(-1,1)}^{\epsilon}
\end{align}
by Lemma \ref{derM} iv).
\newline
{\it Case 4.}  $n\in K$ and $n\notin L$.  From \cite{LiMi} (see
IV.2.57)
\begin{equation*}
A_{KL}^{\epsilon0}=\varepsilon_{nJ}^K\sum_{k\in
L}\varepsilon_{kJ}^L\sum_{j<n} L_j\lrm_{kj}^{\epsilon0} -
\sum_{m,k\in L\atop j\in J} \varepsilon_{kmM}^L\varepsilon_{njM}^K
L_m\lrm_{kj}^{\epsilon0}.
\end{equation*}
The second sum is $\frac{1}{\gamma}\lra_{(-1,1)}^{\epsilon}+
 \frac{1}{\gamma^{\ast}}\lra_{(-1,1)}^{\epsilon}+
 \frac{1}{\gamma^2}\lra_{(0,2)}^{\epsilon}$
because
\begin{equation*}
L_k\lrm_{mj}^{\epsilon0}=L_m\lrm_{kj}^{\epsilon0}+
 \frac{1}{\gamma}\lra_{(-1,1)}^{\epsilon}+
 \frac{1}{\gamma^{\ast}}\lra_{(-1,1)}^{\epsilon}+
 \frac{1}{\gamma^2}\lra_{(0,2)}^{\epsilon}
\end{equation*}
from Lemma \ref{derM} i). For the first sum we set $m=j$ in Lemma
\ref{derM} i), and use
\begin{align*}
\sum_{j<n}L_j\lrm_{kj}^{\epsilon0}=&\frac{2n(n-1)}
 {\overline{\phi}_{\epsilon}P_{\epsilon}^{n}}L_k\rho^2
 -\frac{n(n-1)}{\overline{\phi}_{\epsilon}P_{\epsilon}^{n+1}}
 L_k\rho^2\sum_{j<n}|L_j\rho^2|^2\\
 &+\frac{1}{\gamma}\lra_{(-1,1)}^{\epsilon}
 +
 \frac{1}{\gamma^{\ast}}\lra_{(-1,1)}^{\epsilon}+
 \frac{1}{\gamma^2}\lra_{(0,2)}^{\epsilon},
\end{align*}
the first two terms on the right of which can be written by
Proposition \ref{2p-l2} as
\begin{align*}
\frac{n(n-1)}{\overline{\phi}_{\epsilon}P_{\epsilon}^{n+1}}
L_k\rho^2 \left( 2P_{\epsilon} -\sum_{j<n} |L_j\rho^2|^2  \right)
 =&\frac{4n(n-1)}{\gamma\gamma^{\ast}}\frac{|\phi_{\epsilon}|^2}
 {\overline{\phi}_{\epsilon}P_{\epsilon}^{n+1}}L_k\rho^2\\
 &+
 \frac{1}{\gamma\gamma^{\ast}}\lra_{(-1,1)}^{\epsilon}
 +
 \frac{1}{\gamma^2}\lra_{(0,2)}^{\epsilon}.
\end{align*}
This gives
\begin{equation}
\label{AKL4}
 A_{KL}^{\epsilon0}=
\frac{4n(n-1)}{\gamma\gamma^{\ast}}\frac{\phi_{\epsilon}}{P_{\epsilon}^{n+1}}
 \varepsilon_{nJ}^K\sum_{k<n}\varepsilon_{kJ}^L L_k\rho^2+
  \frac{1}{\gamma\gamma^{\ast}}\lra_{(-1,1)}^{\epsilon}
  +\frac{1}{\gamma^2}\lra_{(0,2)}^{\epsilon}.
\end{equation}
{\it Case 3.} $n\notin K$ and $n\in L$. Comparing (\ref{AKL3}) and
(\ref{AKL4}) we obtain
\begin{align*}
A_{KL}^{\epsilon0}&=-\varepsilon_{nQ}^L\varepsilon_{K}^{kQ}\frac{4n(n-1)}{\gamma\gamma^{\ast}}
\frac{\phi_{\epsilon}^{\ast}}{P_{\epsilon}^{n+1}}\overline{L}_k\rho^2+
 \frac{1}{\gamma^2}\lra_{(-1,1)}^{\epsilon}+\frac{1}{\gamma\gamma^{\ast}}\lra_{(-1,1)}^{\epsilon}\\
A_{LK}^{\epsilon0}&=\varepsilon_{nJ}^L\varepsilon_{kJ}^{K}\frac{4n(n-1)}{\gamma\gamma^{\ast}}
\frac{\phi_{\epsilon}}{P_{\epsilon}^{n+1}}L_k\rho^2+
\frac{1}{\gamma\gamma^{\ast}}\lra_{(-1,1)}^{\epsilon}
 +\frac{1}{\gamma^2}\lra_{(0,2)}^{\epsilon},
\end{align*}
and
\begin{equation*}
\gamma(\zeta)A_{KL}^{\epsilon0}-\gamma(z)(A_{LK}^{\epsilon0})^{\ast}=
 \frac{1}{\gamma}\lra_{(-1,1)}^{\epsilon}+\frac{1}{\gamma^{\ast}}\lra_{(-1,1)}^{\epsilon}.
\end{equation*}
{\it Case 4.} We can reduce it to Case 3 by
\begin{equation*}
\gamma(\zeta)A_{KL}^{\epsilon 0}-\gamma(z)(A_{LK}^{\epsilon
0})^{\ast}=
 -\big(\gamma(\zeta)A_{LK}^{\epsilon 0}-\gamma(z)(A_{KL}^{\epsilon 0})^{\ast}\big)^{\ast}.
\end{equation*}
\end{proof}
This also concludes the proof of Theorem \ref{thrmP}.  As an
important corollary to theorems \ref{thrmT} and \ref{thrmP} we see
if we compose the operator ${\mathbf P}^{\epsilon}_q$ with
$\gamma^2\gamma^{\ast}$ or with $\gamma(\gamma^{\ast})^2$, we
obtain operators which are of type 1.  This is the idea behind the
next theorem which results from multiplying the basic integral
representation Theorem \ref{bir} by an appropriate number of
factors of $\gamma$ and $\gamma^{\ast}$.  We can then let
$\epsilon\rightarrow 0$ to obtain a representation on the domain
$D$.

For a given $f\in
L^2_{0,q}(D)\cap\mbox{Dom}(\mdbar)\cap\mbox{Dom}(\mdbar^{\ast})$
we take a sequence $\{f_{\epsilon}\}_{\epsilon}$ which approaches
$f$ in the graph norm by Proposition \ref{mdense}.
 With the use of Theorem \ref{bir} we define operators
$T^{\epsilon}_q$, $S^{\epsilon}_q$, and $P^{\epsilon}_q$ so that
we have the representation for each $f_{\epsilon}$
\begin{equation}
\label{tspep}
 f_{\epsilon}(z)= T^{\epsilon}_q \mdbar f_{\epsilon}
+ S^{\epsilon}_q\mdbar^{\ast}_{\epsilon}f_{\epsilon} +
P^{\epsilon}_q f_{\epsilon}.
\end{equation}
We then define the operators $T_q$, $S_q$, and $P_q$ to be such
that $\gamma^{\ast}T_q\circ\gamma^2$,
$\gamma^{\ast}S_q\circ\gamma^2$, and
 $\gamma^{\ast}P_q\circ\gamma^2$
are the limit operators, as $\epsilon\rightarrow 0$, of
 $\gamma^{\ast}T_q^{\epsilon}\circ\gamma^2$,
 $\gamma^{\ast}S_q^{\epsilon}\circ\gamma^2$, and
 $\gamma^{\ast}P_q^{\epsilon}\circ\gamma^2$, respectively,
 which exist by
Proposition \ref{typicalest}.  We therefore obtain the
\begin{thrm}
\label{basicintrep}
 For $f\in L^2_{(0,q)}(D)\cap Dom(\mdbar)\cap Dom(\mdbar^{\ast})$,
\begin{equation*}
\gamma(z)^3 f(z)=\gamma^{\ast} T_q \mdbar \left(\gamma^2 f\right)
+\gamma^{\ast} S_q\mdbar^{\ast}\left(\gamma^2 f\right)
+\gamma^{\ast} P_q\left(\gamma^2 f\right).
\end{equation*}
\end{thrm}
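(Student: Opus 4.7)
The plan is to apply the basic representation (\ref{tspep}) on $D_\epsilon$ to the form $\gamma^2 f_\epsilon$ (rather than to $f_\epsilon$ itself), multiply through by $\gamma(z)$, and let $\epsilon\to 0$. Inserting the extra factor of $\gamma^2$ on the input side is dictated by Theorem \ref{thrmP}: $P_q^\epsilon$ carries singular $1/\gamma$ and $1/\gamma^*$ factors, and only the composite $\gamma^* P_q^\epsilon\circ\gamma^2$ yields kernels of a type good enough for the uniform bounds of Proposition \ref{typicalest}.

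First, by Proposition \ref{mdense} choose $f_\epsilon\in\mbox{Dom}\,H_\epsilon$ with $f_\epsilon\to f$ in the graph norm. Since $r$ is Morse and hence $C^2$, $\gamma^2=|\partial r|^2$ is $C^1$ on $\overline D$ with uniform bounds; multiplication by $\gamma^2$ therefore preserves $\mbox{Dom}(\mdbar)\cap\mbox{Dom}(\mdbar^*_\epsilon)$, and the commutators $[\mdbar,\gamma^2]$, $[\mdbar^*_\epsilon,\gamma^2]$ are bounded zero-order multipliers independent of $\epsilon$. Applying (\ref{tspep}) to $\gamma^2 f_\epsilon$ and multiplying by $\gamma(z)=\gamma^*$ yields
\begin{equation*}
\gamma(z)^3 f_\epsilon(z)=\gamma^* T_q^\epsilon\mdbar(\gamma^2 f_\epsilon)+\gamma^* S_q^\epsilon\mdbar^*_\epsilon(\gamma^2 f_\epsilon)+\gamma^* P_q^\epsilon(\gamma^2 f_\epsilon).
\end{equation*}

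Second, let $\epsilon\to 0$. The left-hand side converges to $\gamma^3 f$ in $L^2$ by graph-norm convergence and boundedness of $\gamma$. For the right-hand side, the Leibniz expansion $\mdbar(\gamma^2 f_\epsilon)=\mdbar(\gamma^2)\wedge f_\epsilon+\gamma^2\mdbar f_\epsilon$ (and its adjoint analogue for $\mdbar^*_\epsilon$) together with the uniform $C^1$-bound on $\gamma^2$ gives $L^2$-convergence of each input. By Theorems \ref{thrmT} and \ref{thrmP}, the operators $\gamma^* T_q^\epsilon$, $\gamma^* S_q^\epsilon$, and $\gamma^* P_q^\epsilon\circ\gamma^2$ have kernels of type $\ge 1$ (up to the harmless bounded isotropic $E_{j-2n}$ pieces appearing in Theorem \ref{bir}), hence are uniformly $L^2\to L^2$-bounded by Proposition \ref{typicalest} and Corollary \ref{yngcor}; moreover their kernels depend continuously on $\epsilon$ because $r_\epsilon\overset{C^2}{\to}r$ forces the same for $\phi_\epsilon$, $P_\epsilon$, and $\lrl_q^\epsilon$. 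A standard dominated-convergence/uniform-boundedness argument transfers the limit through each term.

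The main obstacle is the $P$-term. Without the cancellation results, $\gamma^* P_q^\epsilon$ would have double type $(-1,0)$ with $1/\gamma$ factors that persist in the limit. Only the identities of Proposition \ref{cnclprop}, which in turn rely on the coordinate calculations in Propositions \ref{proplnp}, \ref{2p-l2} and Lemmas \ref{mlemma}, \ref{derM}, gain the two orders of type needed to make $\gamma^* P_q^\epsilon\circ\gamma^2$ of type $1$. A secondary concern is confirming that $\gamma^2 f_\epsilon$ stays in $\mbox{Dom}(\mdbar^*_\epsilon)$ with norm controlled uniformly in $\epsilon$, which reduces to the $C^1$-estimates of Lemma \ref{c1diff}.
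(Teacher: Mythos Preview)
Your proposal is correct and is essentially the argument the paper has in mind: the paper's ``proof'' consists only of the paragraph preceding the theorem, which defines $T_q,S_q,P_q$ as limits of $\gamma^{\ast}T_q^{\epsilon}\circ\gamma^2$, $\gamma^{\ast}S_q^{\epsilon}\circ\gamma^2$, $\gamma^{\ast}P_q^{\epsilon}\circ\gamma^2$ and invokes Proposition~\ref{typicalest} for convergence; your substitution of $\gamma^2 f_\epsilon$ into (\ref{tspep}), multiplication by $\gamma^{\ast}$, and passage to the limit via Theorems~\ref{thrmT}, \ref{thrmP} and Corollary~\ref{yngcor} is precisely how that terse paragraph is meant to be unpacked. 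One small correction: the $C^1$ (in fact $C^\infty$) regularity of $\gamma^2=|\partial r|^2$ is immediate from the smoothness of $r$ and does not require Lemma~\ref{c1diff}, which concerns $r_\epsilon/\gamma$.
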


\section{Estimates}

We define $Z_1$ operators to be those which take the form
\begin{equation*}
Z_1=A_{(1,1)}+E_{1-2n}\circ\gamma,
\end{equation*}
and we write Theorem \ref{basicintrep} as
\begin{equation}
\label{z1cal}
 \gamma^3 f =
 Z_1\gamma^2\mdbar f +Z_1\gamma^2\mdbar^{\ast} f
  +Z_1 f.
\end{equation}

We define $Z_j$ operators to be those operators of the form
\begin{equation*}
 Z_j=\overbrace{Z_1\circ\cdots\circ Z_1}^{j\mbox{ times}}.
\end{equation*}

By applying Corollary \ref{yngcor} and Theorem \ref{E1properties}
 $n+2$ times, we have the property
\begin{equation*}
 Z_{n+2}:L^2(D)\rightarrow L^{\infty}(D).
\end{equation*}

 We now iterate (\ref{z1cal}) to get
\begin{align*}
\label{basiciterate}
 \gamma^{3j}f=&(Z_1\gamma^{3(j-1)+2}+Z_2\gamma^{3(j-2)+2}+
  \cdots+Z_{j}\gamma^2)\mdbar f \\
 \nonumber
  &+(Z_1\gamma^{3(j-1)+2}+Z_2\gamma^{3(j-2)+2}+
  \cdots+Z_{j}\gamma^2)\mdbar^{\ast}f+Z_{j}f.
\end{align*}
Then we can prove
\begin{thrm} For $f\in L^2_{0,q}(D)\cap\mbox{Dom}(\mdbar)\cap
\mbox{Dom}(\mdbar^{\ast})$, $q\ge 1$, \label{k=0}
\begin{equation*}
\|\gamma^{3(n+2)}f\|_{L^{\infty}}\lesssim \|\gamma^2\mdbar
f\|_{\infty}+\|\gamma^2\mdbar^{\ast}f\|_{\infty}+\|f\|_2 .
\end{equation*}
\end{thrm}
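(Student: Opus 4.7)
The plan is to specialize the iterated representation
\begin{equation*}
\gamma^{3j}f = \sum_{k=1}^{j} Z_k\gamma^{3(j-k)+2}\mdbar f + \sum_{k=1}^{j} Z_k\gamma^{3(j-k)+2}\mdbar^{\ast}f + Z_j f
\end{equation*}
displayed just before the theorem to $j=n+2$, and then to bound each of the three pieces on the right in $L^{\infty}(D)$ against one of $\|\gamma^2\mdbar f\|_\infty$, $\|\gamma^2\mdbar^{\ast}f\|_\infty$, or $\|f\|_2$.

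For the final term $Z_{n+2}f$ I would invoke the $L^2\to L^{\infty}$ mapping property already recorded in the text: each factor of $Z_1$ gains strictly less than $1/(2n+2)$ in the Sobolev exponent (by Corollary \ref{yngcor} applied to $A_{(1,1)}$ and by Theorem \ref{E1properties} applied to $E_{1-2n}\circ\gamma$, with $\gamma$ acting as a bounded multiplier), so $n+2$ compositions accumulate a total gain strictly exceeding $1/2$, and consequently $\|Z_{n+2}f\|_{\infty}\lesssim\|f\|_{2}$.

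For the two sums I would exploit that $\gamma=|\partial r|$ is bounded on $\overline{D}$, so that for every $1\le k\le n+2$ one has the pointwise estimate
\begin{equation*}
\gamma(\zeta)^{3(n+2-k)+2}|g(\zeta)|\lesssim \gamma(\zeta)^2|g(\zeta)|,
\end{equation*}
which gives $\|\gamma^{3(n+2-k)+2}g\|_{\infty}\lesssim\|\gamma^2 g\|_{\infty}$ for $g=\mdbar f$ and $g=\mdbar^{\ast}f$. It then suffices to know $Z_k:L^{\infty}(D)\to L^{\infty}(D)$: for each $A_{(1,1)}$-factor this follows from Proposition \ref{typicalest} with $\lambda=1$ (which lies in the allowed range $1\le\lambda<(2n+2)/(2n+1)$), so the kernels are integrable in one variable uniformly in $\epsilon$; for each $E_{1-2n}\circ\gamma$-factor this follows from Theorem \ref{E1properties} at the endpoint $p=s=\infty$. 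Composing these $L^{\infty}$-bounds through the $k$ factors in $Z_k$ then yields $\|Z_k\gamma^{3(n+2-k)+2}g\|_{\infty}\lesssim\|\gamma^2 g\|_{\infty}$, and summing in $k$ handles both sums.

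The main obstacle is a bookkeeping one: each $Z_1$ is the sum of two rather different operators, so $Z_{n+2}$ expands into $2^{n+2}$ compositions, and one has to ensure that in every single branch the cumulative Sobolev gain strictly exceeds $1/2$ so as to bring $L^2$ into $L^{\infty}$. This works uniformly because each constituent operator contributes a gain of at least $1/(2n+2)$ per step, and these gains simply add through composition of Young-type integral operators, independently of which branch one follows at each stage.
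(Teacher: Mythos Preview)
Your proposal is correct and follows essentially the same route as the paper: specialize the iterated identity to $j=n+2$, control $Z_{n+2}f$ by the $L^2\to L^\infty$ property already recorded, and bound the remaining sums using $L^\infty\to L^\infty$ boundedness of each $Z_k$ together with $\gamma\in L^\infty(\overline D)$. The paper leaves the handling of the $\mdbar f$ and $\mdbar^\ast f$ sums implicit, whereas you spell out the $Z_k:L^\infty\to L^\infty$ step via Proposition~\ref{typicalest} at $\lambda=1$ and Theorem~\ref{E1properties} at $p=s=\infty$; this is exactly the intended argument.

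One small wording issue: in your final paragraph you say each constituent operator ``contributes a gain of at least $1/(2n+2)$ per step,'' which contradicts your earlier (correct) statement that the gain from $A_{(1,1)}$ is strictly less than $1/(2n+2)$. What you mean is that the gain can be chosen arbitrarily close to $1/(2n+2)$ (and up to $1/(2n)$ for the $E_{1-2n}$ factors), so that after $n+2$ steps the cumulative gain can be made to exceed $1/2$ since $(n+2)/(2n+2)>1/2$. This is only a phrasing slip; the argument itself is sound.
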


\end{document}